\newtheorem{thm}{Theorem}[section]
\newtheorem{prp}[thm]{Proposition}
\newtheorem{lem}[thm]{Lemma}
\newtheorem{cor}[thm]{Corollary}
\theoremstyle{definition}
\newtheorem{dfn}[thm]{Definition}
\newtheorem*{rmk*}{Remark}
\numberwithin{equation}{section}
\newcommand{\Z}{{\mathbb Z}}
\newcommand{\Q}{{\mathbb Q}}
\newcommand{\R}{{\mathbb R}}
\newcommand{\N}{{\mathbb N}}
\newcommand{\wt}{\widetilde}
\begin{document}

\title[Transfer products on loop spaces]{Homology transfer products on free loop spaces: orientation reversal on spheres}

\author{Philippe Kupper}

\address{Fakult\"at f\"ur Mathematik, KIT, Englerstraße 2, 
76131 Karlsruhe, Germany}

\email{philippe.kupper@kit.edu}

\begin{abstract}
We consider the space $\Lambda M\coloneqq H^1(S^1,M)$ of loops of Sobolev class $H^1$ of a compact smooth manifold $M$, the so-called free loop space of $M$. We take quotients $\Lambda M/G$ where $G$ is a finite subgroup of $O(2)$ acting by linear reparametrization of $S^1$. We use the existence of transfer maps $tr\colon H_*(\Lambda M/G)\to H_*(\Lambda M)$ to define a homology product on $\Lambda M/G$ via the Chas-Sullivan loop product. We call this product $P_G$ the transfer product. The involution $\vartheta\colon\Lambda M\to \Lambda M$ which reverses orientation, $ \vartheta\big(\gamma(t)\big)\coloneqq\gamma(1-t)$, is of particular interest to us. We compute $H_*(\Lambda S^n/\vartheta;\mathbb{Q})$, $n>2$, and the product $P_\vartheta\colon H_i(\Lambda S^n/\vartheta;\mathbb{Q})\times H_j(\Lambda S^n/\vartheta;\mathbb{Q})\to H_{i+j-n}(\Lambda S^n/\vartheta;\mathbb{Q})$ associated to orientation reversal. Rationally $P_\vartheta$ can be realized "geometrically" using the concatenation of equivalence classes of loops. There is a qualitative difference between the homology of $\Lambda S^n/\vartheta$ and the homology of $\Lambda S^n/G$ when $G\subset S^1\subset O(2)$ does not "contain" the orientation reversal. This might be interesting with respect to possible differences in the number of closed geodesic between non-reversible and reversible Finsler metrics on $S^n$, the latter might always be infinite.
\end{abstract}

\maketitle

\tableofcontents

\section*{Introduction}
In this article we investigate the topology of the free loop space $\Lambda M$ of a compact smooth manifold $M$. More precisely, we are interested in the homology of the quotient space $\Lambda M/G$ where $G$ is a finite discrete group acting on $\Lambda M$.

Our motivation is geometric in nature, namely, given a Riemannian metric $g$ on $M$, its closed geodesics are elements of the free loop space $\Lambda M$. Moreover, $\Lambda M$ possesses the structure of a smooth, infinite dimensional manifold and there is a smooth function $E_g\colon\Lambda M\to\R$, called the energy function associated to a metric $g$ on $M$, whose nonconstant critical points are exactly the closed geodesics of $g$. Furthermore, there is a Riemannian metric $g_1$ on $\Lambda M$ such that the function $E_g$ satisfies the so-called Palais-Smale condition. This enables us to relate the topology of $\Lambda M$ to the critical points of $E_g$ via Lusternik-Schnirelmann or even Morse theory. If we want to find and count the closed geodesic of a metric $g$ it might thus make sense to study $H_*(\Lambda M)$. 

There is a natural isometric action of $O(2)$ on $(\Lambda M,g_1)$ which leaves the energy $E_g$ invariant for any metric $g$. This action is induced by linear reparametrization of $S^1$. Hence, we do not lose information about closed geodesics if we look at the quotient spaces $\Lambda M/G$ for subgroups $G$ of $O(2)$. A action of special interest is that of $\Z_2\subset O(2)$ induced by the involution
\begin{equation*}
    \vartheta\colon\Lambda M\to \Lambda M,\, \vartheta\left(\gamma\right)(t)\coloneqq\gamma(1-t),
\end{equation*}
where we view a loop $\gamma$ to be parametrized on the interval $[0,1]$. This is the involution that reverses the orientation of a loop. We sometimes also use the notation $\bar{\gamma}$ to denote $\gamma$ with opposite orientation: $\bar{\gamma}\coloneqq\vartheta(\gamma)$.

We are going to look at quotients $\Lambda M/G$ for finite subgroups $G$ of $O(2)$. We will see that, for some coefficients $R$ (not necessarily the optimal ones) such as $\Q$, the homology of the quotient is the same as the homology of $\Lambda M$ if we "divide" only by subgroups of $S^1\subset O(2)$. In contrast, as soon as we look at subgroups of $O(2)$ "involving" the orientation reversal $\vartheta$ above, the homology changes and $H_*(\Lambda M;R)$ and $H_*(\Lambda M/G;R)$ are not isomorphic any longer. This is interesting for the following geometric reason:
The critical-point-theoretic approach to the closed geodesic problem is not only possible in the Riemannian setting but also in the more general framework of Finsler Geometry. However, the closed geodesics of a Finsler metric might not be invariant under orientation reversal, the energy of $c$ and of $\vartheta(c)$ might be different. This is best exemplified when $M=S^n$: On $(S^n,g_{st})$, where $g_{st}$ is the standard, round metric, there are infinitely many closed geodesics, namely the great circles parametrized proportionally to arc length. Certainly, this is due to the huge isometry group of $g_{st}$, but it is also known that "most" Riemannian metrics (on compact, simply-connected manifolds) have infinitely many closed geodesics. Results in this direction are Theorems 9.8, 9.10 and 9.11 in \cite{MR1237191}. Of course, here we mean infinitely many "geometrically distinct" ones, so if we count a closed geodesic $c$ we do not count its iterates $c^r(t)\coloneqq c(rt),\,r\in\N$, which are different curves "representing" the same closed geodesic. On spheres $S^n$ we have no examples of Riemannian metrics with only finitely many closed geodesics, but there are examples of Finsler metrics which have only finitely many. Those are the so-called Katok examples (\cite{MR743032}). These Katok metrics are non-reversible metrics, so their energy is not invariant under $\vartheta$. Bearing this in mind, this change in topology when "dividing" $\Lambda M$ by $\vartheta$ might be of geometric relevance.\\

\paragraph*{Organizaion of the paper:}
\begin{enumerate}
    \item In the first section we recall the definition of the Chas-Sullivan loop product, which is defined on the homology $H_*(\Lambda M)$ of the free loop space $\Lambda M$ of a compact smooth manifold $M$. The product uses the concatenation of loops, i.e. a map $\phi\colon\mathcal{F}\coloneqq\{(\alpha,\beta)\in\Lambda\times\Lambda|\,\alpha(0)=\beta(0)\}\to \Lambda M$. Here $\mathcal{F}$ denotes the so-called figure-eight space, a submanifold of $\Lambda M\times \Lambda M$ of codimension $dim(M)$. We also recall the explicitly known Chas-Sullivan algebra of spheres $S^n$, $n>2$.
        
    \item In the second section we define the transfer maps $tr\colon H_i(\Lambda M/G)\to H_i(\Lambda M)$ for quotients of actions of finite discrete groups $G$. Using this we define what we call the transfer product $P_G$, which is a product on the homology $H_i(\Lambda M/G)$. It is defined to be the composition
\begin{equation*}
\xymatrixcolsep{5pc}
 \xymatrix{
  H_i(\Lambda/G;R)\times H_j(\Lambda/G;R)\ar[d]_-{\text{transfer}\times\text{transfer}}&H_{i+j-n}(\Lambda/G;R)\\
  H_i(\Lambda;R)\times H_j(\Lambda;R)\ar[r]_-{\text{Chas-Sullivan product}}&
  H_{i+j-n}(\Lambda;R)\ar[u]_-{q_*}
    }
\end{equation*}
    where $q\colon\Lambda M\to \Lambda M/G$ is the quotient map.
    
    \item In Section 3 we compute the transfer product $P_\vartheta$ associated to the orientation reversal of loops, $\left(\gamma\right)(t)\coloneqq\gamma(1-t)$, on spheres $S^n$, $n>2$ with rational coefficients. We in fact deduce the structure of the algebras $(H_*(\Lambda S^n;\Q),P_G)$ for all groups $G$ conjugate to some dihedral group.\\
\end{enumerate}

We also consider a $\Z_2$-action which is closely related to the action $\vartheta$, namely the $\Z_2$-action induced by the involution $\theta\colon\Lambda M\to\Lambda M$
\begin{equation*}
    \theta(\gamma)(t)\coloneqq\gamma\left(1-(t+\tfrac{1}{2})\right).
\end{equation*}
We denote the action also simply by $\theta$.\\

\paragraph*{Results:}

\begin{itemize}
    \item For $G\subset S^1\subset O(2)$ finite we always have
    \begin{equation*}
        H_i(\Lambda M/G;\Q)\cong H_i(\Lambda M;\Q)
    \end{equation*}
    as $\Q$-modules for all $i$. While for finite $G\subset O(2)$ with $G\cap\big(O(2)\setminus S^1)\neq \emptyset$ we have
    \begin{equation*}
        H_i(\Lambda M/G;\Q)\cong H_i(\Lambda M/\vartheta;\Q)\cong H_i(\Lambda M;\Q)^\vartheta\coloneqq\{x\in H_i(\Lambda M;\Q)|\;\vartheta_*(x)=x\}
    \end{equation*}
    (compare Proposition \ref{reversible vs. nonreversible?}).
    
    \item Under orientation reversal the fibration $ev_0:\Lambda M\rightarrow M,\,\gamma\mapsto \gamma(0)$ descends to a fibration $ev_0/\vartheta:\Lambda M/\vartheta\rightarrow M$ with fibre $\Omega M/\vartheta$ where $\Omega M$ denotes the based loop space. Rationally $ev_0/\vartheta$ induces an algebra homomorphism up to scaling between $P_\vartheta$ and the intersection algebra on $M$.
\begin{equation*}
\xymatrixrowsep{1.5pc}
\xymatrixcolsep{4pc}
\xymatrix{
\Omega\ar[r]^-j\ar[d]^-q&\Lambda\ar[r]^-{ev_0}\ar[d]^-q&M\ar[d]\\
\Omega/\vartheta\ar[r]\ar[r]^-{j/\vartheta}&\Lambda/\vartheta\ar[r]^-{ev_0/\vartheta}& M,
 }
\end{equation*}
    Also the quotient of the fibre inclusion $j/\vartheta$ induces an algebra homomorphism $(j/\vartheta)_!$ between $P_\vartheta$ and the transfer product on $\Omega/\vartheta$ (induced by the Pontrjagin product) when rational coefficients are used (see Propositions \ref{ev/G homo}, \ref{j/G homo}).
    
    \item For a finite subgroup $G$ of $O(2)$, acting by linear reparametrization on $\Lambda M$, the quotient map $q\colon\Lambda M\to \Lambda M/G$ induces an algebra isomorphism up to scaling between the rational Chas-Sullivan algebra restricted to classes that are invariant under the action of $G$ and the transfer algebra $\big(H_*(\Lambda M;\Q),P_G\big)$ (see Theorem \ref{O(2) subgroup iso}).\\
Furthermore, we compute the product
\begin{equation*}
    P_\vartheta\colon H_i(\Lambda S^n/\vartheta;\Q)\times H_j(\Lambda S^n/\vartheta;\Q)\to H_{i+j-n}(\Lambda S^n/\vartheta;\Q)
\end{equation*}
and in particular show that there are nonnilpotent classes for this product on spheres (see Theorem \ref{main thm}). The existence of nonnilpotent classes could be useful in the search for closed geodesics (compare \cite{hingston2013}). An analogous result holds for the product $P_\theta$ with rational coefficients.
\end{itemize}

\begin{rmk*}
The products $P_\vartheta$ and $P_\theta$ possess "geometric realizations". By that we mean that there are homology products $A_\vartheta$ on $H_*(\Lambda M/\vartheta)$ and $A_\theta$ on $H_*(\Lambda M/\theta)$ which are defined using the concatenation of equivalence classes of pairs of loop, i.e. using maps $\mathcal{F}/\Z_2\to \Lambda M/\vartheta$ and $\mathcal{F}/\Z_2\to \Lambda M/\theta$ respectively, such that 
\begin{align*}
A_\vartheta=P_\vartheta,\quad A_\theta=P_\theta
\end{align*}
holds up to sign. For the definition of the products $A_\vartheta$ and $A_\theta$ the author constructs "tubes" around $\mathcal{F}/\Z_2$ as a subset of $\Lambda\times\Lambda/\Z_2$. While in the case of the action $\theta$ these neighbourhoods are actual tubular neighbourhood, in the case of $\vartheta$ the neighbourhoods are only homeomorphic to a quotient of a vector bundle. These "geometric products" are constructed in \cite{kupper20}.
\end{rmk*}

This article is based on the author's Ph.D. thesis \cite{kupper20} written at the University of Leipzig under the supervision of Hans-Bert Rademacher. The author thanks Professor Rademacher for the introduction to the closed geodesic problem and the opportunity to work on this topic. The author also thanks the referee for the very careful review and the many helpful comments improving this article.

\section{The manifold $\Lambda M$ and the Chas-Sullivan product}

\subsection{The free loop space of a manifold}
Let $M$ be a topological space. We first consider the space of continuous loops $LM\coloneqq C^0(S^1,M)$ in $M$. $LM$ carries the compact-open topology. The base point in $S^1$ in denoted by $0$. 

\begin{prp}
The evaluation map $ev_0\colon LM\to M,\, \gamma\mapsto\gamma(0)$ is a Hurewicz fibration. The fibre $ev_0^{-1}(p)$ over a point $p\in M$ is the space of loops based at $p$:
\begin{equation*}
 \Omega_p\coloneqq\{\gamma\in LM|\;\gamma(0)=p\}=ev_0^{-1}(p).
\end{equation*}  
\end{prp}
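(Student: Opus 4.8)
The plan is to prove this by exhibiting the standard path-lifting construction directly. Recall that a map $p\colon E\to B$ is a Hurewicz fibration if it has the homotopy lifting property with respect to every space: given a homotopy $H\colon X\times[0,1]\to B$ and a lift $\tilde H_0\colon X\to E$ of $H_0=H(\cdot,0)$, there is a homotopy $\tilde H\colon X\times[0,1]\to E$ lifting $H$ with $\tilde H(\cdot,0)=\tilde H_0$. For the evaluation map $ev_0\colon LM\to M$ this means: given a family of loops $\gamma_x := \tilde H_0(x)$ and a family of paths $\sigma_x(s) := H(x,s)$ in $M$ with $\sigma_x(0)=\gamma_x(0)$, we must produce a family of loops $\Gamma_x(s)$ with $\Gamma_x(0)=\gamma_x$ and $\Gamma_x(s)(0)=\sigma_x(s)$, depending continuously on $(x,s)$.

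First I would give the explicit formula. The natural choice is to "drag the basepoint of the loop along the path $\sigma_x$": think of $S^1=[0,1]/{\sim}$, and set
\begin{equation*}
\Gamma_x(s)(t) := \begin{cases} \sigma_x\big((1-3t)\,s\big), & 0\le t\le \tfrac{1}{3},\\[2pt] \gamma_x(3t-1), & \tfrac{1}{3}\le t\le \tfrac{2}{3},\\[2pt] \sigma_x\big((3t-2)\,s\big), & \tfrac{2}{3}\le t\le 1,\end{cases}
\end{equation*}
so that the loop $\Gamma_x(s)$ runs backwards along $\sigma_x|_{[0,s]}$, then traverses $\gamma_x$, then runs forwards along $\sigma_x|_{[0,s]}$; this is a genuine loop based at $\sigma_x(s)$ since the two endpoint arcs cancel at the seam. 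One checks $\Gamma_x(0)=\gamma_x$ (up to the evident reparametrization of $S^1$, which can be absorbed, or one chooses the formula so that $s=0$ gives exactly $\gamma_x$ reparametrized by the fixed homeomorphism — for a cleaner statement one can instead precompose $\gamma_x$ with a degree-one homeomorphism of $S^1$ built into the formula, or simply note that reparametrization by a fixed homeomorphism is a homeomorphism of $LM$ and adjust). Then $ev_0(\Gamma_x(s))=\Gamma_x(s)(0)=\sigma_x(0\cdot s)$ — here one must be slightly careful: with $t=0$ we get $\sigma_x(s)$ only if the formula is arranged so the basepoint sits at the moving end; I would write the formula with the $\sigma_x((3t-2)s)$ branch adjacent to $t=1\equiv 0$ so that $\Gamma_x(s)(0)=\sigma_x(s)$ as required.

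Next I would verify continuity: $\Gamma$ is defined piecewise by compositions of $H$, $\tilde H_0$, and continuous reparametrizations, and the pieces agree on the overlaps $t=\tfrac13,\tfrac23$, so $\Gamma\colon X\times[0,1]\to LM$ is continuous by the pasting lemma together with the exponential law for the compact–open topology (a map into $C^0(S^1,M)=LM$ is continuous iff the adjoint map on $X\times[0,1]\times S^1$ is continuous, using that $S^1$ is locally compact Hausdorff). Continuity of the adjoint is clear since it is built from $H$ and $\tilde H_0$ by continuous substitutions. This establishes the homotopy lifting property and hence that $ev_0$ is a Hurewicz fibration. The identification of the fibre $ev_0^{-1}(p)=\{\gamma\in LM\mid \gamma(0)=p\}=\Omega_p$ is immediate from the definition of $ev_0$.

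The main obstacle — really the only subtle point — is the bookkeeping at $s=0$ and at the seam $t\in\{0\equiv 1\}$: the most transparent "drag the basepoint" formula naturally produces a loop that, at $s=0$, is $\gamma_x$ precomposed with a fixed homeomorphism of $S^1$ rather than $\gamma_x$ itself. The clean fix is either to build that fixed reparametrization into the identification $S^1\cong[0,1]/{\sim}$ from the outset, or to observe that $LM$ is homeomorphic to itself via this fixed reparametrization and that the homotopy lifting property is invariant under such a homeomorphism. Either way this is routine and does not affect the argument. (Alternatively, one can cite that $ev_0$ is the restriction to loops of the endpoint fibration on the path space $M^{[0,1]}\to M$, which is a classical Hurewicz fibration, but giving the explicit lifting formula as above is self-contained and is exactly the construction whose quotient-compatible refinements are needed later in the paper.)
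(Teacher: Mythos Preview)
Your approach differs from the paper's: you verify the homotopy lifting property by an explicit ``drag the basepoint'' formula, whereas the paper simply cites the general fact that for a cofibration $i\colon A\hookrightarrow B$ with $B$ locally compact Hausdorff the induced restriction $i^*\colon C^0(B,M)\to C^0(A,M)$ is a Hurewicz fibration, applied to $\{0\}\hookrightarrow S^1$. The paper's proof is thus a one-line citation; yours is self-contained and makes the lift concrete.

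That said, your treatment of the initial condition $\tilde H(\cdot,0)=\tilde H_0$ contains a genuine gap, not just bookkeeping. With your formula, $\Gamma_x(0)$ is constant equal to $\gamma_x(0)$ on $[0,\tfrac13]\cup[\tfrac23,1]$ and traverses $\gamma_x$ only on the middle third. The underlying self-map of $S^1$ is \emph{not} a homeomorphism --- it collapses two arcs to points --- so neither of your proposed fixes applies: a non-homeomorphism cannot be ``absorbed into the identification $S^1\cong[0,1]/{\sim}$'', and precomposition by it is an injective but non-surjective self-map of $LM$, hence not a homeomorphism of $LM$ either. Since the HLP requires $\tilde H(\cdot,0)=\tilde H_0$ exactly, this must be repaired. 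The standard remedy is to let the partition depend on $s$: use intervals $\big[0,\tfrac{s}{4}\big]$, $\big[\tfrac{s}{4},1-\tfrac{s}{4}\big]$, $\big[1-\tfrac{s}{4},1\big]$, running $\sigma_x|_{[0,s]}$ backwards on the first, $\gamma_x$ (linearly rescaled) on the second, and $\sigma_x|_{[0,s]}$ forwards on the third. At $s=0$ the outer intervals vanish and $\Gamma_x(0)=\gamma_x$ on the nose; continuity of the adjoint map on $X\times[0,1]\times S^1$ is still immediate from the pasting lemma. With this correction your argument goes through.
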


\begin{proof}
Since the inclusion of the base point $i:0\hookrightarrow S^1$ is a cofibration and $S^1$ is (locally) compact and Hausdorff we can apply \cite[Chapter VII, Theorem 6.13]{MR1224675} to deduce that $i^*=ev_0\colon LM\to C^0(0,M)\cong M$ is a Hurewicz fibration. 
\end{proof}

If $M$ is a topological manifold, then $ev_0:LM\rightarrow M$ is in addition locally trivial.\\

From now on we even take $M$ to be a smooth compact manifold. In this case, instead of investigating $LM$ we can look at the Hilbert manifold $\Lambda M$ of $H^1$-curves on $M$. Topologically this does not make much difference since $LM$ and $\Lambda M$ are homotopy equivalent. The continuous inclusions 
\begin{equation*}
 \xymatrix{
C^\infty(S^1,M)\ar@{^{(}->}[r]&\Lambda M\ar@{^{(}->}[r]&LM
     }
\end{equation*}
are in fact homotopy equivalences (\cite[Theorem 1.2.10]{MR0478069} or \cite[Theorem 1.5.1]{MR3729450}).\\

As done in \cite[page 8]{MR0478069} the set of functions $\Lambda M$ can also be described as the set of absolutely continuous maps $f\colon S^1(=[0,1]/\{0,1\})\to M$ whose derivative $f'$ (which is defined almost everywhere) is square-integrable, i.e. $\int_{S^1}g\big(f(t)\big)\big(f'(t),f'(f)\big)\mathrm{d}t<\infty$. It can be equipped with a differenttiable structure:

\begin{thm}\cite[Theorem 1.2.9]{MR0478069}
 $\Lambda M$ is a smooth Hilbert manifold of infinite dimension.
\end{thm}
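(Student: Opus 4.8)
The plan is to follow the classical construction of Palais and Klingenberg: build an atlas on $\Lambda M$ modelled on the separable Hilbert space $E:=H^1(S^1,\R^n)$, $n=\dim M$, whose chart transitions are smooth because $S^1$ is one-dimensional. First I would fix a Riemannian metric on $M$; compactness yields a positive injectivity radius $\rho>0$, so the exponential map $\exp$ is defined and restricts to a diffeomorphism on the $\rho$-ball of each tangent space. For a \emph{smooth} curve $c\in C^\infty(S^1,M)$ consider the pullback bundle $c^*TM\to S^1$ and the Hilbert space $\Gamma_{H^1}(c^*TM)$ of its $H^1$-sections, and define
\[
\exp_c\colon\ \{\xi\in\Gamma_{H^1}(c^*TM):\ \|\xi\|_{\infty}<\rho\}\ \longrightarrow\ \Lambda M,\qquad \exp_c(\xi)(t):=\exp_{c(t)}\bigl(\xi(t)\bigr),
\]
with inverse $\sigma\mapsto\bigl(t\mapsto\exp_{c(t)}^{-1}\sigma(t)\bigr)$, defined whenever $\sigma$ is $C^0$-close to $c$. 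Since the Sobolev embedding $H^1(S^1)\into C^0(S^1)$ is continuous, the domains of these maps are $H^1$-open, and as $C^\infty(S^1,M)$ is $C^0$-dense (indeed $H^1$-dense) in $\Lambda M$, the images of the $\exp_c$ cover $\Lambda M$. Trivializing $c^*TM$ over $S^1$ (there are only two isomorphism types of rank-$n$ real bundles over $S^1$, and the $H^1$-sections of either form a Hilbert space isomorphic to $E$) turns each $\exp_c$ into a homeomorphism from an open subset of $E$ onto an open subset of $\Lambda M$.

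The heart of the argument is the smoothness of the transition maps $\exp_{c_1}^{-1}\circ\exp_{c_2}$. In local trivializations each is a pointwise composition (Nemytskii) operator $\xi\mapsto\bigl(t\mapsto h(t,\xi(t))\bigr)$, where $h(t,v)\approx\exp_{c_1(t)}^{-1}\bigl(\exp_{c_2(t)}(v)\bigr)$ is smooth in $(t,v)$ on its domain because $c_1,c_2$ and $\exp$ are smooth. The key input — Palais' $\Omega$-lemma — is that such operators are smooth between the relevant $H^1$-section spaces; this is exactly where $\dim S^1=1$ enters, since it relies on $H^1(S^1)$ being a Banach algebra that embeds continuously into $C^0(S^1)$. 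I would prove the $\Omega$-lemma by exhibiting the candidate derivative as the superposition operator associated with the fibre derivative $\partial h$, estimating the remainder in the $H^1$-norm via the algebra property and the $C^0$-bound, and then iterating to get smoothness of all orders. The same lemma shows the atlas is independent of the metric and of the chosen trivializations, so the smooth structure is well defined.

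It remains to record the bookkeeping: $\Lambda M$ is Hausdorff (its $H^1$-topology refines the Hausdorff $C^0$-topology) and second countable (embed $M$ in some $\R^N$, so $\Lambda M\subset H^1(S^1,\R^N)$, which is separable metrizable), hence a genuine Hilbert manifold; and it is infinite-dimensional because already $H^1(S^1,\R)$ contains the infinite linearly independent family of trigonometric monomials. The main obstacle is the $\Omega$-lemma: once the smoothness of pointwise composition on $H^1(S^1)$ is established, the manifold structure assembles formally from the charts $\exp_c$.
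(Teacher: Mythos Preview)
The paper does not give its own proof of this statement: it simply cites Klingenberg \cite[Theorem~1.2.9]{MR0478069} and moves on. Your sketch is precisely the classical Palais--Klingenberg argument that the citation points to (charts $\exp_c$ based at smooth loops, model space $H^1(S^1,\R^n)$ via sections of $c^*TM$, smoothness of transitions from the $\Omega$-lemma, with the Sobolev embedding $H^1(S^1)\hookrightarrow C^0(S^1)$ doing the work), so there is nothing to compare --- you have reconstructed the standard proof the paper defers to, and the outline is correct.
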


$H^1$-vector fields along $H^1$-curves are the fibres of the tangent bundle $T\Lambda M\rightarrow \Lambda M$  of $\Lambda M$: $H^1\big(f^*(TM)\big)\cong T_f\Lambda M$ is the fibre over $f\in\Lambda M$ (\cite[Section 1.3]{MR0478069}). Induced by the metric $g$ on $M$, there is a metric $g_1$ on $\Lambda M$: The metric
\begin{equation*}
 g_1(X,Y)\coloneqq\int_{S^1}g(X(t),Y(t))\mathrm{d}t+\int_{S^1}g\big(\nabla_{\dot{c}}X(t),\nabla_{\dot{c}}Y(t)\big)\mathrm{d}t\,,
\end{equation*}
defined on $H^1\big(c^*(TM)\big)$ for smooth loops $c$, extends to all of $H^1(S^1,M)$ (\cite[Theorem 1.3.6]{MR0478069}).

\begin{thm} \cite[Theorem 1.4.5]{MR0478069} or \cite[Theorem 2.4.7]{MR666697} \label{complete lambda}
If $M$ is compact, $(\Lambda M, g_1)$ is a complete Riemannian manifold. That is, $\Lambda M$ with the distance induced form $g_1$ is a complete metric space.
\end{thm}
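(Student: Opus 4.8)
The plan is to realise $\Lambda M$ as a closed submanifold of an honest Hilbert space and to compare the intrinsic metric $g_1$ with the flat Hilbert norm on sub-levels of the energy $E(f):=\tfrac12\int_{S^1}|\dot f|^2\,dt$. Since $M$ is compact, fix an isometric embedding $M\hookrightarrow\R^N$ (possible since $M$ is compact); it induces an embedding $\Lambda M\hookrightarrow\mathcal H:=H^1(S^1,\R^N)$, where $\mathcal H$ is a Hilbert space with $\|X\|_{\mathcal H}^2=\|X\|_{L^2}^2+\|\dot X\|_{L^2}^2$, hence complete. The first step is to check that $\Lambda M$ is \emph{closed} in $\mathcal H$: if $f_k\to f$ in $\mathcal H$, then by the Sobolev embedding $H^1(S^1)\hookrightarrow C^0(S^1)$ we get $f_k\to f$ uniformly, so $f(t)=\lim_k f_k(t)\in M$ for every $t$ (as $M$ is closed in $\R^N$), and since $f\in H^1$ this means $f\in\Lambda M$.

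Next come two comparison estimates, both using compactness of $M$. First, for $X\in T_f\Lambda M\subset\mathcal H$ one has $g_1(X,X)\le\|X\|_{\mathcal H}^2\le\big(1+C\,E(f)\big)\,g_1(X,X)$, with $C$ depending only on a bound for the second fundamental form $\alpha$ of $M\subset\R^N$. Indeed, the Gauss formula $\dot X=\nabla_{\dot f}X+\alpha(\dot f,X)$ with $\nabla_{\dot f}X$ the tangential component gives the lower bound at once; for the upper bound one uses $\|\alpha(\dot f,X)\|_{L^2}^2\le\|\alpha\|_\infty^2\,\|X\|_{C^0}^2\,\|\dot f\|_{L^2}^2$ together with the elementary estimate $\|X\|_{C^0}^2\le 2\,g_1(X,X)$ (valid because $S^1$ has length $1$, by integrating $\tfrac{d}{dt}|X|^2=2g(\nabla_{\dot f}X,X)$ from a point where $|X|$ is minimal). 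Second, the function $\sqrt E$ is $\tfrac1{\sqrt2}$-Lipschitz for $d_{g_1}$: for a smooth path $c(s)$, $0\le s\le1$, in $\Lambda M$, symmetry of the Levi-Civita connection gives $\tfrac{d}{ds}\tfrac12\|\partial_t c\|_{L^2}^2=\int_{S^1}g\big(\nabla_{\partial_t}\partial_s c,\ \partial_t c\big)\,dt\le\|\partial_s c\|_{g_1}\,\sqrt{2E(c(s))}$, hence $\tfrac{d}{ds}\sqrt{E(c(s))}\le\tfrac1{\sqrt2}\|\partial_s c\|_{g_1}$; integrating and taking the infimum over paths proves the claim.

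Now let $(f_k)$ be $d_{g_1}$-Cauchy. By the Lipschitz estimate the energies $E(f_k)$ are bounded, and there is $K$ such that the tail $\{f_k:k\ge K\}$ lies in the $d_{g_1}$-ball $B_\varepsilon(f_K)$ for an $\varepsilon$ small enough that $B_\varepsilon(f_K)$ sits inside a single chart of $\Lambda M$ (here we use the standard fact that the Riemannian distance of a Hilbert manifold induces its manifold topology) and that $B_\varepsilon(f_K)\subset\{E\le\rho\}$ for some fixed $\rho$ (again by the Lipschitz estimate). On $\{E\le\rho\}$ the two norms are uniformly equivalent by the first estimate, so in that chart the tail is $\|\cdot\|_{\mathcal H}$-Cauchy; it therefore converges in $\mathcal H$ to some $f$, which lies in $\Lambda M$ by closedness (and in the chart if $\varepsilon$ was chosen small enough), and then $d_{g_1}(f_k,f)\to 0$ once more by the first estimate. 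This gives completeness.

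The main obstacle is the friction between the \emph{pointwise} comparison of $g_1$ and $\|\cdot\|_{\mathcal H}$, whose constant degenerates as $E\to\infty$, and the \emph{global} distance $d_{g_1}$: one cannot simply declare the two Riemannian distances bi-Lipschitz, so the argument must be localised to a small metric ball around $f_K$. Making that localisation legitimate is exactly where one needs the Lipschitz control of $\sqrt E$ (to confine the tail to a fixed energy sub-level) and the fact that $d_{g_1}$ induces the manifold topology (to confine it to one chart); the remaining verifications — closedness of $\Lambda M$ in $\mathcal H$ and the Gauss-formula computation — are routine.
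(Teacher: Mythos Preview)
The paper does not give its own proof of this theorem; it simply cites Klingenberg \cite[Theorem 1.4.5]{MR0478069} and \cite[Theorem 2.4.7]{MR666697}. Your argument is essentially the standard proof one finds in Klingenberg: embed $\Lambda M$ as a closed subset of the Hilbert space $H^1(S^1,\R^N)$ via an isometric embedding $M\hookrightarrow\R^N$, use the Gauss formula to compare $g_1$ with the ambient $H^1$-norm on energy sub-levels, and use the $\tfrac{1}{\sqrt{2}}$-Lipschitz bound on $\sqrt{E}$ to trap Cauchy sequences in a fixed sub-level. The two key estimates you derive are correct, and the passage from $d_{g_1}$-Cauchy to $\mathcal H$-Cauchy is clean once you observe (as you do implicitly) that near-minimising paths between tail elements themselves stay in a fixed energy sub-level by the Lipschitz estimate applied pointwise along the path.

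The one place worth tightening is the final sentence. Having obtained an $\mathcal H$-limit $f\in\Lambda M$, you want $d_{g_1}(f_k,f)\to 0$. Invoking ``the first estimate'' directly is not quite enough, since that estimate compares norms on tangent spaces, not distances, and the straight segment in $\mathcal H$ does not lie in $\Lambda M$. What actually closes the argument is exactly the ``standard fact'' you already cited: $d_{g_1}$ induces the manifold topology on $\Lambda M$, and the manifold topology agrees with the subspace topology from $\mathcal H$ because $\Lambda M\hookrightarrow\mathcal H$ is a smooth embedding. So $\mathcal H$-convergence in $\Lambda M$ gives $d_{g_1}$-convergence, and you are done. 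The chart localisation you describe is one way to make this explicit, but you should not also need the pointwise norm comparison at that stage; the topological statement suffices.
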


\begin{prp}\cite[Lemma 2.2]{10.1007/1-4020-4266-3_02}
The evaluation map $ev_0\colon\Lambda M\to M, \,\gamma\mapsto\gamma(0)$ is a locally trivial fibration. If $M$ is connected, the fibres are all homeomorphic to $\Omega_p$, the based loop space at any point $p\in M$. $\Omega_p\coloneqq\{\gamma\in\Lambda M|\,\gamma(0)=p\}\subset\Lambda M$ is a smooth submanifold of $\Lambda M$ of codimension $n$.  
\end{prp}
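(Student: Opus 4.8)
\emph{Proof sketch.} The plan is to build an explicit smooth local trivialization of $ev_0$ by hand and then read off the two remaining assertions. Fix $p\in M$ and a chart $\varphi\colon U\to\R^n$ with $\varphi(p)=0$, and pick $\beta\colon\R^n\to[0,1]$ with $\beta\equiv 1$ near $0$ and $\operatorname{supp}\beta\subset\varphi(U)$. For $q$ in a small enough neighbourhood $U'$ of $p$, the self-map of $M$ that equals the identity off $U$ and reads $x\mapsto x+\beta(x)\varphi(q)$ in the chart is a diffeomorphism $\phi_q\colon M\to M$ (a properly supported, $C^1$-small perturbation of the identity), with $\phi_p=\mathrm{id}_M$, $\phi_q(p)=q$, and $(q,x)\mapsto\phi_q(x)$ smooth on $U'\times M$. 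Set
\begin{equation*}
\Psi\colon U'\times\Omega_p\longrightarrow ev_0^{-1}(U'),\qquad \Psi(q,\gamma):=\phi_q\circ\gamma .
\end{equation*}
Then $ev_0(\phi_q\circ\gamma)=\phi_q(p)=q$, so $\Psi$ is fibre-preserving over $U'$, and it is a bijection with inverse $\delta\mapsto\big(\delta(0),\,\phi_{\delta(0)}^{-1}\circ\delta\big)$. Hence it only remains to check that $\Psi$ and $\Psi^{-1}$ are smooth.

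\textbf{The main point} is exactly this smoothness, which is an instance of the $\Omega$-lemma in the $H^1$-category: a smooth map $F\colon V\times N\to N'$ of finite-dimensional manifolds (with $V$ a parameter manifold) induces a smooth map $V\times\Lambda N\to\Lambda N'$, $(v,\gamma)\mapsto F(v,-)\circ\gamma$. Read in the charts of $\Lambda M$ built from $\exp^g$ as in \cite[Ch.~1]{MR0478069}, this reduces to smoothness of the superposition operator $u\mapsto f\circ u$ on $H^1(S^1,\R^k)$ for $f$ smooth, which holds because $H^1(S^1)$ is a Banach algebra and $H^1(S^1)\hookrightarrow C^0(S^1)$. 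Applying it to $F(q,x)=\phi_q(x)$, resp.\ to $F(q,x)=\phi_q^{-1}(x)$ together with smoothness of $ev_0$ (again a special case), shows that $\Psi$ and $\Psi^{-1}$ are smooth. Thus $ev_0$ is a locally trivial smooth fibre bundle with fibre $\Omega_p$.

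If $M$ is connected, joining $p$ to an arbitrary $q$ by a path and composing the trivializations over a finite subcover of that path yields a homeomorphism $\Omega_p\cong\Omega_q$ (equivalently: $\operatorname{Diff}(M)$ is transitive on a connected manifold, and any $\psi\in\operatorname{Diff}(M)$ with $\psi(p)=q$ induces $\Omega_p\xrightarrow{\ \cong\ }\Omega_q$, $\gamma\mapsto\psi\circ\gamma$). Finally, $d(ev_0)_\gamma\colon T_\gamma\Lambda M=H^1(\gamma^*TM)\to T_{\gamma(0)}M$ is $X\mapsto X(0)$; given $v\in T_{\gamma(0)}M$, extending it to a smooth vector field $W$ on $M$ and restricting, $t\mapsto W(\gamma(t))$ is an $H^1$-field along $\gamma$ with value $v$ at $0$, so $d(ev_0)_\gamma$ is surjective with kernel of codimension $n$, which therefore splits. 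Hence $ev_0$ is a submersion, every point of $M$ is a regular value, and $\Omega_p=ev_0^{-1}(p)$ is a closed smooth submanifold of $\Lambda M$ of codimension $\dim M=n$. The only nonformal ingredient in all of this is the parametrized $\Omega$-lemma used for smoothness of $\Psi^{\pm1}$; everything else is bookkeeping.
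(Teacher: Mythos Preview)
The paper does not supply its own proof of this proposition; it is simply quoted from \cite[Lemma 2.2]{10.1007/1-4020-4266-3_02}. Your argument is correct and is essentially the standard one found in that reference (and in \cite{MR0478069}): one builds local trivializations by postcomposing loops with a smooth family of diffeomorphisms $\phi_q$ of $M$ moving $p$ to $q$, and the only substantive step is the smoothness of $(q,\gamma)\mapsto\phi_q\circ\gamma$ on $H^1$-loops, which is exactly the $\Omega$-lemma you invoke. The submersion argument for $ev_0$ and the resulting codimension-$n$ statement are likewise standard; note that in the Hilbert setting the splitting of $\ker d(ev_0)_\gamma$ is automatic via the $g_1$-orthogonal complement, so your parenthetical remark is justified. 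One cosmetic point: for $\phi_q$ to extend smoothly by the identity outside $U$ you want $\operatorname{supp}\beta$ \emph{compact} in $\varphi(U)$, and for smoothness of $\Psi^{-1}$ you implicitly use that $(q,x)\mapsto\phi_q^{-1}(x)$ is smooth, which follows from the inverse function theorem with parameters; both are routine but worth saying once.
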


\subsection{Definition and properties of the Chas-Sullivan product}
Let $M$ now be a fixed finite-dimensional compact and connected smooth manifold. Let $\Lambda\coloneqq\Lambda M$ denote the Hilbert manifold of closed curves on $M$. Let $M$ have dimension $n$.\\
We call $\mathcal{F}\coloneqq \{(\alpha,\beta)\in\Lambda\times\Lambda|\,\alpha(0)=\beta(0)\}$ the figure-eight space.  $\mathcal{F}$ fits into the following pullback diagram:
\begin{equation*}
\xymatrixrowsep{1.5pc}
 \xymatrix{
 \mathcal{F}\ar@{^{(}->}[r]^-{i_\mathcal{F}} \ar[d]_-{ev_0} &\Lambda\times\Lambda \ar[d]^-{ev_0\times ev_0}\\
 M\ar@{^{(}->}[r]_-{\Delta}&M\times M
     }
\end{equation*}
where $ev_0\colon\mathcal{F}\to M$ is $ev_0(\alpha,\beta)\coloneqq\alpha(0)=\beta(0)$, $i_{\mathcal{F}}\colon\mathcal{F}\hookrightarrow\Lambda\times\Lambda=\Lambda^2$ is the inclusion and $\Delta\colon M\hookrightarrow M\times M=M^2,\;p\mapsto (p,p)$ is the diagonal embedding. The Chas-Sullivan product
\begin{displaymath}
 \ast\colon H_i(\Lambda)\times H_j(\Lambda)\to H_{i+j-n}(\Lambda)
\end{displaymath}
is defined as the composition (see \cite{CJ2002} or \cite[Section 5]{goresky2009})
\begin{displaymath}
\xymatrixcolsep{3pc}
  \xymatrix{
  H_i(\Lambda)\times H_j(\Lambda)\ar[d]_-{(-1)^{n(n-j)}\times}&&&H_{i+j-n}(\Lambda)\\
  H_{i+j}(\Lambda\times\Lambda)\ar[r]\ar@/^1.4pc/[rrr]^-{{i_\mathcal{F}}_!}&
  H_{i+j}(\Lambda\times\Lambda,\Lambda \times\Lambda -\mathcal{F})\ar[rr]_-{\text{Thom isomorphism}}&&
  H_{i+j-n}(\mathcal{F})\ar[u]_-{\phi_*}
  }
\end{displaymath}

Here $H_i(\cdot)$ denotes singular homology. Which coefficients we use will be clear from the context. The Chas-Sullivan product was originally defined in \cite{chas1999string} with a different definition.\\
We explain the different maps in this definition:
\begin{itemize}
 \item The first map is just the homological cross product $\times$ together with the sign-correction $(-1)^{n(n-j)}$ for nicer algebraic properties as we will see later.
 
 \item The map ${i_\mathcal{F}}_!$ is a so-called "Gysin'' or "umkehr'' map. It is defined to be the composition of the map $H_{i+j}(\Lambda\times\Lambda)\to H_{i+j}(\Lambda\times\Lambda,\Lambda \times\Lambda -\mathcal{F})$ induced by inclusion and the Thom isomorphism. That the latter exists follows from the fact that $\mathcal{F}$ is a Hilbert submanifold of $\Lambda\times\Lambda$: One can show that $ev_0\times ev_0\colon\Lambda\times\Lambda\rightarrow M\times M$ is a submersion. Thus $\mathcal{F}=(ev_0\times ev_0)^{-1}\big(\Delta(M)\big)$ is a submanifold of codimension $n$.

Let $U_{\mathcal{F}}$ be a tubular neighbourhood of $\mathcal{F}$. Since $\mathcal{F}$ is closed in $\Lambda^2=(\Lambda^2-\mathcal{F})\cup U_{\mathcal{F}}$ we have, by excision, that 
\begin{equation*}
 H_{i+j}(\Lambda^2,\Lambda^2-\mathcal{F})\cong H_{i+j}(U_{\mathcal{F}},U_{\mathcal{F}}-\mathcal{F})\cong H_{i+j}(N_{\mathcal{F}},N_{\mathcal{F}}-\mathcal{F}).
\end{equation*}
We can then cap with a (chosen) Thom class $\tau_{\mathcal{F}}\in H^n(N_{\mathcal{F}},N_{\mathcal{F}}-\mathcal{F})$ of the normal bundle $N_{\mathcal{F}}\rightarrow \mathcal{F} $, which is an isomorphism by the Thom isomorphism theorem (\cite[Appendix B]{goresky2009}) :
\begin{equation*}
 \xymatrix{
   H_{i+j}(N_{\mathcal{F}},N_{\mathcal{F}}-\mathcal{F})\ar[r]^-{\cap\tau_{\mathcal{F}}}_-\cong&H_{i+j-n}(N_{\mathcal{F}})\cong H_{i+j-n}(\mathcal{F}).
     }
\end{equation*}
Clearly, we have to take appropriate coefficients for homology here: If $M$ is orientable, then so is $N_\mathcal{F}$ (see below) and we can take any coefficient ring. Otherwise we have to take $\Z_2$ as coefficient domain.

 \item On $\mathcal{F}$ the composition of loops can be defined (\cite[Section 2]{goresky2009}): The "concatenation of loops" $\phi=\phi_{\frac{1}{2}}\colon\mathcal{F}\to\Lambda$ is a continuous map defined by
\begin{equation*}
 \phi(\gamma,\delta)(t)=\phi_{\frac{1}{2}}(\gamma,\delta)(t)\coloneqq\begin{cases} \gamma(2t), & 0\leq t\leq\frac{1}{2}  \\ \delta(2t-1), &\frac{1}{2}\leq t\leq 1 \end{cases}.
\end{equation*}
We sometimes also write $\gamma\cdot\delta$ for $\phi(\gamma,\delta)$.
 \end{itemize}

We now list some properties of the Chas-Sullivan product: The Chas-Sullivan product is 
\begin{itemize}
\item \emph{associative} (\cite[Theorem 3.3]{chas1999string} or \cite[Theorem 2.5]{1709.06839v2}).

\item \emph{graded commutative} (\cite[Proposition 5.2]{goresky2009}): we have
\begin{align*}
b\ast a=(-1)^{(|a|-n)(|b|-n)}a\ast b,
\end{align*}
where $|a|$ is the degree of $a$, i.e. $a\in H_{|a|}(\Lambda M)$.

\item \emph{unital} if $M$ is a compact oriented manifold (\cite[Theorem 2.5]{1709.06839v2}): The inclusion of point curves $c\colon M\hookrightarrow\Lambda M^0\subset\Lambda M,\;p\mapsto (t\mapsto p \;\forall t)$ is injective in homology since it is a section of the evaluation $ev_0\circ c=id_M$. The image $E:=c_*([M])$ of the orientation class is a two-sided unit element for the Chas-Sullivan product.
\end{itemize}

Let us look again at the loop fibration $\Omega\xhookrightarrow{j}\Lambda\xrightarrow{ev_0} M$. Later we are going to make use of 
\begin{prp}\label{alg hom CS Pont}\cite[Proposition 3.4]{chas1999string}
Let $M$ be a connected, compact, oriented $n$-dimensional manifold.
 \begin{enumerate}
  \item The homomorphism ${ev_0}_*\colon H_*(\Lambda;\Z)\to H_*(M;\Z)$ of graded modules is an algebra homomorphism from the Chas-Sullivan algebra to the intersection algebra.
  
  \item The homomorphism $j_!\colon H_*(\Lambda;\Z)\to H_{*-n}(\Omega;\Z)$ of graded modules is an algebra homomorphism from the Chas-Sullivan algebra to the Pontrjagin algebra.
 \end{enumerate}
\end{prp}
(compare \cite[Proposition 2.4]{10.1007/1-4020-4266-3_02}) Here $j!$ is again the Gysin map induced by the inclusion $\Omega\hookrightarrow\Lambda M$.

\subsection{Explicit tubular neighbourhoods of the figure-eight space}\label{explicit tubes}

We will need some explicit tubular neighbourhoods. We start with showing that $N_{\mathcal{F}}$, the normal bundle of $\mathcal{F}$ in $\Lambda^2$, is the pullback of the normal bundle $N_M$ of $\Delta(M)\cong M$ in $M^2=M\times M$:
\begin{lem}\label{nb is pullback}
$N_{\mathcal{F}}\cong{ev_0}^*(N_M)$.
\end{lem}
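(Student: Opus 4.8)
The plan is to exploit the pullback square defining $\mathcal{F}$, namely $\mathcal{F} = (ev_0\times ev_0)^{-1}(\Delta M)$, together with the general fact that normal bundles pull back under transverse (indeed submersive) preimages. Concretely, write $F := ev_0\times ev_0 : \Lambda^2 \to M^2$; since $F$ is a submersion, its restriction to $\mathcal{F}$ is transverse to $\Delta(M)\hookrightarrow M^2$, and one has the standard identification $N_{\mathcal{F}} \cong (F|_{\mathcal{F}})^*\bigl(N_{\Delta M}\bigr)$ of the normal bundle of the preimage with the pullback of the normal bundle of the submanifold. Here $F|_{\mathcal{F}}$ is exactly the map $ev_0 : \mathcal{F}\to M$ from the pullback diagram (composing $\mathcal{F}\hookrightarrow \Lambda^2 \xrightarrow{F} M^2$ lands in $\Delta(M)\cong M$ and equals $(\alpha,\beta)\mapsto\alpha(0)=\beta(0)$). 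So the claim $N_{\mathcal{F}}\cong ev_0^*(N_M)$ is precisely this general statement, once we know $F$ is a submersion.

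First I would recall (or cite, as the excerpt already does) that $ev_0\times ev_0 : \Lambda^2\to M^2$ is a submersion — this follows because $ev_0:\Lambda M\to M$ is a submersion, e.g. by producing an explicit right inverse on tangent spaces: given $v\in T_pM$ and a loop $\gamma$ with $\gamma(0)=p$, extend $v$ to an $H^1$ vector field along $\gamma$ (for instance the parallel transport of $v$ multiplied by a smooth bump, or simply a constant-in-a-chart extension), giving a tangent vector to $\Lambda$ at $\gamma$ mapping to $v$ under $d(ev_0)$. Second, I would verify transversality of $i_{\mathcal{F}}:\mathcal{F}\hookrightarrow\Lambda^2$ composed into $M^2$ against $\Delta$: at a point $(\alpha,\beta)\in\mathcal{F}$ the image of $dF$ is all of $T_{(p,p)}M^2$, so a fortiori $\operatorname{im}(dF) + T_{(p,p)}\Delta M = T_{(p,p)}M^2$. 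Third, the differential identification: the normal space $N_{\mathcal{F},(\alpha,\beta)} = T_{(\alpha,\beta)}\Lambda^2 / T_{(\alpha,\beta)}\mathcal{F}$ is carried isomorphically by $dF$ onto $T_{(p,p)}M^2 / T_{(p,p)}\Delta M = N_{M,p}$, because $T_{(\alpha,\beta)}\mathcal{F} = dF^{-1}(T_{(p,p)}\Delta M)$ (the pullback of the tangent bundle is the tangent bundle of the pullback, since everything is transverse). These fibrewise isomorphisms assemble into a bundle map $N_{\mathcal{F}}\to N_M$ covering $ev_0:\mathcal{F}\to M$ that is a fibrewise isomorphism, hence exhibits $N_{\mathcal{F}}\cong ev_0^*(N_M)$.

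The only genuinely delicate point is the infinite-dimensional bookkeeping: one must make sure that $T_{(\alpha,\beta)}\mathcal{F}$ is genuinely the kernel-type object $dF^{-1}(T\Delta M)$ as a \emph{closed, split} subspace of the Hilbert space $T_{(\alpha,\beta)}\Lambda^2$, so that the quotient is the finite-dimensional $N_M$ and the tubular neighbourhood machinery of \cite{MR1666820} applies. This is exactly what submersivity of $F$ buys us — the kernel of $dF$ splits off a finite-dimensional complement mapping isomorphically to $T_{(p,p)}M^2$ — so the main obstacle is really just organizing this cleanly rather than proving anything deep; I would phrase it as: $F$ submersion $\Rightarrow$ $\mathcal{F}$ is a (split) submanifold of codimension $n$ with $N_{\mathcal{F}} = $ the $F$-horizontal complement restricted to $\mathcal{F}$, and this complement is by construction $ev_0^*$ of the horizontal complement over $M$, i.e. $ev_0^*(N_M)$. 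An alternative, more formal route that avoids choosing complements: use that for a pullback square of smooth maps with one leg an embedding and the square transverse, the induced square of tangent bundles is also a pullback, and then the normal bundle, being the cokernel of $T\mathcal{F}\hookrightarrow i_{\mathcal{F}}^*T\Lambda^2$, is the pullback of the cokernel of $T\Delta M\hookrightarrow \Delta^*T M^2$, which is $N_M$; pullback of cokernels along the base change $ev_0$ gives $ev_0^*N_M$. I would present whichever of these is shortest, probably the explicit differential argument since the tubular neighbourhood $t_{\mathcal{F}}$ will be built by hand in the subsequent subsection anyway.
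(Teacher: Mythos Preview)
Your proposal is correct and follows essentially the same route as the paper: use that $ev_0\times ev_0$ is a submersion so that $\mathcal{F}$ is a codimension-$n$ submanifold, observe that the differential $d(ev_0\times ev_0)|_\mathcal{F}$ induces a fibrewise surjective map $N_\mathcal{F}\to N_M$ (with $T\mathcal{F}$ in the kernel), argue it is a fibrewise isomorphism by dimension, and conclude $N_\mathcal{F}\cong ev_0^*(N_M)$ via the universal property of the pullback. The paper's proof is terser---it cites Klingenberg for the submersion fact and omits your discussion of the infinite-dimensional splitting---but the argument is the same.
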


\begin{proof}
This holds since $ev_0\times ev_0\colon\Lambda^2\to M^2$ is a submersion (compare e.g. \cite[Proposition 2.4.1]{MR666697}). Therefore $(ev_0\times ev_0)^{-1}\big(\Delta(M)\big)=\mathcal{F}$ is a submanifold of codimension $n$. Also due to transversality, the composition
\begin{equation*}
\xymatrixcolsep{4pc}
\xymatrix{
 T\Lambda^2|_\mathcal{F}\ar[r]^-{d(ev_0\times ev_0)|_\mathcal{F}}&TM^2|_{\Delta(M)}\ar[r]&TM^2|_{\Delta(M)}/T\Delta(M)
 }
\end{equation*}
is fibrewise surjective. Since $T\mathcal{F}$ is in the kernel of this bundle map we get a map 
\begin{equation*}
 f\colon N_\mathcal{F}\coloneqq\frac{T\Lambda^2|_\mathcal{F}}{T\mathcal{F}}\rightarrow\frac{TM^2|_M}{TM}=:N_M
\end{equation*}
which is a surjective bundle map. For dimensional reasons $f$ is fibrewise an isomorphism. We therefore have a commutative diagram
\begin{equation*}
\xymatrixrowsep{1.5pc}
\xymatrixcolsep{4pc}
\xymatrix{
 N_\mathcal{F}\ar[rd]_-g^\cong\ar@/^2pc/[rrd]^-f\ar@/_2pc/[rdd]&&\\
 &{ev_0}^*(N_M)\ar[d]\ar[r]^-{pr_{N_M}}&N_M\ar[d]\\
 &\mathcal{F}\ar[r]^-{ev_0}&M
 }
\end{equation*}
which shows that $g$ is fibrewise an isomorphism and thus it is a bundle isomorphism over $\mathcal{F}$. That is, the normal bundle of $\mathcal{F}$ is (isomorphic to) the pullback of the normal bundle of $\Delta(M)\cong M$.
\end{proof}

Let $(M,g)$ be a compact Riemannian manifold an let $d_g(p,q)$ denote the distance in $M$ between the points $p,q\in M$ defined by the metric $g$. We define the open neighbourhood $U_{M,\varepsilon}$ of $\Delta(M)$ inside $M^2$ by $U_{M,\varepsilon}\coloneqq\{(p,q)\in M^2|\;d_g(p,q)<\varepsilon\}$. Let $\exp$ denote the Riemannian exponential map of $g$. Since $M$ is compact, there exists an $\varepsilon>0$ such that the maps 
\begin{align*}
t_M'\colon D_\varepsilon TM\hookrightarrow M\times M,\;(p,v)\mapsto \big(p, \exp(p,v)\big)
\end{align*}
embeds the disk bundle $D_\varepsilon TM\coloneqq\{(p,v)\in TM|\;|v|_g<\varepsilon\}$ as tubular neighbourhood of $\Delta{M}$ into $M^2$. The tangent bundle $TM$ and the normal bundle $N_M$ of $\Delta{M}\subset M\times M$ are isomorphic (\cite[Lemma 11.5]{MR0440554}) and we understand $t_M'$ as defined on $D_\varepsilon N_M$. We denote with $t_M$ an extension of $t_M'$ to all of $N_M$. We have $U_{M,\varepsilon}=t_M(N_M)=t_M'(D_\varepsilon N_M)$. Let $\rho$, with $0<\rho<\infty$, denote the injectivity radius of $(M,g)$. The map smooth map $h\colon U_{M,\rho}\times M \subset M\times M \times M\to M $, that ''pushes points", is defined in \cite[Lemma 2.1]{1709.06839v2} via
\begin{eqnarray*}
h(p,q)(x) \coloneqq&\left\{ 
\begin{array}{ll}
\exp\Big(p,\exp _{p}^{-1}(x)+\mu \big(d_g(p,x)\big)\exp _{p}^{-1}(q)\Big) & \text{ if }d_g(p,x)\leq \rho /2 \\ 
w & \text{ if }d_g(p,x)\geq \rho /2%
\end{array}%
\right.
\end{eqnarray*}
where $\mu \colon[0,\infty )\rightarrow \mathbb{R}$ is a smooth "cut-off" function that is constant 1 near 0, constant 0 on $[\frac{\rho}{3},\infty)$ and decays appropriately in between. $h$ satisfies
\begin{enumerate}
 \item $h(p,q)\coloneqq h(p,q,\cdot)\colon M\to M$ is a diffeomorphism if $d_g(p,q)<\frac{\rho}{14}$,
 \item $h(p,q)(p)=q$ if $d_g(p,q)<\frac{\rho}{14}$, i.e. it pushes the first point to the second if the two points a close enough,
 \item $h(p,p)=id_M$.
\end{enumerate}
The map $h$ can be used to define a tubular neighbourhood embedding $t_\mathcal{F}:N_{\mathcal{F}}\cong{ev_0}^*(N_M)\hookrightarrow\Lambda^2$ of $\mathcal{F}$ that lifts $t_M$: In \cite[Proposition 2.2]{1709.06839v2} they use $t_M'$ and construct $t_\mathcal{F}$ as follows.
We have $D_\varepsilon{ev_0}^*(N_M)={ev_0}^*(D_\varepsilon N_M)$ if we equip ${ev_0}^*(N_M)$ with the pullback metric. We choose $\varepsilon<\frac{\rho}{14}$. The embedding $t_\mathcal{F}'\colon {ev_0}^*(D_\varepsilon N_M)\hookrightarrow\Lambda^2$ is defined by 
\begin{equation*}
 t_\mathcal{F}'\big((\gamma,\delta),(x,v)\big)\coloneqq\big(\gamma,\lambda(\delta,v)\big)
\end{equation*} 
and uses the map $h$ from above that ''pushes points" on M. Here, the curve $\lambda$ is defined to be
\begin{equation*}
 \lambda(\delta,v)(t)\coloneqq h\big(\delta(0),\exp(\delta(0),v)\big)\big(\delta(t)\big)=h\big(t_M'(\delta(0),v)\big)\big(\delta(t)\big).
\end{equation*} 
Since $|v|<\varepsilon<\frac{\rho}{14}$, the distance $d_g\big(\delta(0),\exp(\delta(0),v)\big)=d_g\big(\delta(0),\lambda(0)\big)<\varepsilon<\frac{\rho}{14}$ and $h\big(\delta(0),\exp(\delta(0),v)\big)\colon M\to M$ is a diffeomorphism. Thus, instead of starting at $\delta(0)$, the curve $\lambda$ starts at $\exp(\delta(0),v)$. "$\delta$ is diffeomorphically pushed away from $\gamma$".

\begin{prp} (see \cite[Proposition 2.2]{1709.06839v2})
Let $(M,g)$ be a compact Riemannian manifold with injectivity radius $\rho$ and let $0<\varepsilon<\frac{\rho}{14}$. Then the map $t_\mathcal{F}'\colon{ev_0}^*(D_\varepsilon N_M)\hookrightarrow\Lambda^2$ is an open embedding with image $U_{\mathcal{F},\varepsilon}\coloneqq\{(\gamma,\delta)\in\Lambda^2|\,d_g\big(\gamma(0),\delta(0)\big)<\varepsilon\}$ which is an open neighbourhood of $\mathcal{F}$. Moreover, the diagram
\begin{equation*}
\xymatrixrowsep{1.5pc}
 \xymatrix{
  {ev_0}^*(D_\varepsilon N_M)\ar@{^{(}->}[r]^-{t_\mathcal{F}'} \ar[d]_{pr_{N_M}} &\Lambda\times\Lambda\ar[d]^{ev_0\times ev_0}\\
  D_\varepsilon N_M\ar@{^{(}->}[r]^-{t_M'}&M\times M
     }
\end{equation*}
commutes and $(ev_0\times ev_0)(U_{\mathcal{F},\varepsilon})=U_{M,\varepsilon}$. 
\end{prp}

Extending $t_\mathcal{F}'$ to all of ${ev_0}^*(N_M)$ yields a tubular neighbourhood map $t_\mathcal{F}$ of $\mathcal{F}\subset\Lambda\times\Lambda$.

\subsection{The Chas-Sullivan algebra of spheres}\label{CS spheres}

Let now $M=S^n$. The loop space homology of spheres is as follows:

\begin{thm} \cite{MR649625} or \cite[Section 13]{goresky2009}
If $n\geq3$, for the space $\Lambda S^n$ we have:
 \begin{itemize}
  \item for $n$ odd 
  \begin{equation*}
   H_i(\Lambda S^n;\Z)\cong\begin{cases} \Z & i=0,n \\
   \Z& i=\lambda_r=(2r-1)(n-1) \text{ for }r\in\N \\
    \Z& i=n-1+\lambda_r=2r(n-1) \text{ for }r\in\N \\
    \Z& i=n+\lambda_r=2r(n-1)+1 \text{ for }r\in\N \\
   \Z& i=2n-1+\lambda_r=n+2r(n-1) \text{ for }r\in\N \\
   \{0\} &\text{otherwise} \end{cases}
  \end{equation*}
\item for $n$ even
 \begin{equation*}
   H_i(\Lambda S^n;\Z)\cong\begin{cases} \Z & i=0,n \\
   \Z& i=\lambda_r=(2r-1)(n-1) \text{ for }r\in\N \\
    \Z_2& i=n-1+\lambda_r=2r(n-1) \text{ for }r\in\N \\
   \Z& i=2n-1+\lambda_r=n+2r(n-1) \text{ for }r\in\N \\
   \{0\} &\text{otherwise} \end{cases}
  \end{equation*}
 \end{itemize}
where $\lambda_r=(2r-1)(n-1)$.
\end{thm}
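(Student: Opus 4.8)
The plan is to compute $H_*(\Lambda S^n;\Z)$ by Morse--Bott theory applied to the energy functional $E=E_{g_{st}}$ of the round metric $g_{st}$ on $S^n$; I will also indicate the parallel route through the Serre spectral sequence of the fibration $ev_0:\Lambda S^n\to S^n$.

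First I would describe the critical set of $E$. The nonconstant closed geodesics of $(S^n,g_{st})$ are the great circles traversed $r$ times, $r\ge1$, so the critical set is the disjoint union of the manifold $\Lambda^0\cong S^n$ of constant loops, at level $0$, and one critical manifold $\Lambda_r$ for each $r\ge1$, sitting at an isolated level $e_r\propto r^2$. Since an $r$-fold great circle is determined by its starting point together with its unit initial velocity, $\Lambda_r$ is diffeomorphic to the unit tangent bundle $T^1S^n$; in particular $\dim\Lambda_r=2n-1$, and, as $n\ge3$, $\Lambda_r$ is simply connected. Because $E$ satisfies the Palais--Smale condition, it is a Morse--Bott function: along $\Lambda^0$ and along each $\Lambda_r$ the nullity of its Hessian equals the dimension of the critical manifold. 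Next I would compute the indices: analysing the Jacobi equation along a great circle and invoking Bott's iteration formula gives index $\lambda_r=(2r-1)(n-1)$ along $\Lambda_r$ and index $0$ along $\Lambda^0$. Since $\Lambda_r$ is simply connected, its negative normal bundle $\nu_r^-$ (of rank $\lambda_r$) is orientable, so the Thom isomorphism is available with $\Z$-coefficients.

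Then I would compute the homology of the critical manifolds. The Gysin sequence of $S^{n-1}\to T^1S^n\to S^n$, whose Euler class is $\chi(S^n)$ times a generator of $H^n(S^n)$ --- thus $0$ when $n$ is odd and twice a generator when $n$ is even --- yields
\[
H_i(T^1S^n;\Z)\cong\begin{cases}\Z,&i=0,\,n-1,\,n,\,2n-1,\\ 0,&\text{otherwise},\end{cases}\qquad(n\text{ odd}),
\]
\[
H_i(T^1S^n;\Z)\cong\begin{cases}\Z,&i=0,\,2n-1,\\ \Z_2,&i=n-1,\\ 0,&\text{otherwise},\end{cases}\qquad(n\text{ even}),
\]
together with $H_*(\Lambda^0;\Z)=H_*(S^n;\Z)$. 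Writing $\Lambda^{\le a}:=\{\gamma\in\Lambda S^n:E(\gamma)\le a\}$, the space $\Lambda^{\le\varepsilon}$ deformation retracts onto $\Lambda^0\cong S^n$, and for each $r$ the Morse--Bott lemma identifies $H_*(\Lambda^{\le e_r+\varepsilon},\Lambda^{\le e_r-\varepsilon};\Z)$ with $H_{*-\lambda_r}(T^1S^n;\Z)$. Splicing the long exact sequences of these pairs produces a spectral sequence converging to $H_*(\Lambda S^n;\Z)$ whose first page is the direct sum of $H_*(S^n;\Z)$ and the shifted pieces $H_{*-\lambda_r}(T^1S^n;\Z)$, $r\ge1$. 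The piece coming from $\Lambda_r$ occupies degrees $\lambda_r+\{0,n-1,n,2n-1\}$ (respectively $\lambda_r+\{0,n-1,2n-1\}$ for $n$ even), and since $\lambda_{r+1}-\lambda_r=2(n-1)$, when $n\ge4$ these pieces interleave without coincident degrees, so no connecting differential can be nonzero and there is no room for extensions; the one remaining case $n=3$ I would settle directly using the Lie group diffeomorphism $\Lambda S^3\cong S^3\times\Omega S^3$ together with $H_*(\Omega S^3;\Z)\cong\Z$ in every even degree. Reading off the surviving groups and using the identities $n-1+\lambda_r=2r(n-1)$, $n+\lambda_r=2r(n-1)+1$, $2n-1+\lambda_r=n+2r(n-1)$, one recovers the two tables.

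I expect the genuine obstacles to be, on the one hand, the index computation for the iterates $\Lambda_r$ --- which rests on Bott's iteration formula --- and, on the other, the orientation and Euler-class bookkeeping responsible for the $\Z_2$'s when $n$ is even, i.e.\ the computation of $H_*(T^1S^n;\Z)$ and the verification that no connecting differentials survive in low dimensions. The same difficulty reappears in the alternative approach: in the Serre spectral sequence of $ev_0:\Lambda S^n\to S^n$ the $E_2$-page has only the columns $p=0$ and $p=n$, each a copy of $H^*(\Omega S^n;\Z)$ (free, with one $\Z$ in each degree $k(n-1)$), the section by constant loops forces the sole possibly nonzero differential $d_n$ to vanish on the degree-$(n-1)$ class, and then the divided-power structure of $H^*(\Omega S^n;\Z)$ together with the derivation property make $d_n$ vanish altogether when $n$ is odd, whereas for $n$ even $d_n$ sends the degree-$2(n-1)$ generator to $\chi(S^n)=2$ times the corresponding generator of column $n$, producing --- after the universal coefficient shift --- exactly the $\Z_2$'s. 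Details of either argument are in \cite{MR649625} and \cite[Section 13]{goresky2009}.
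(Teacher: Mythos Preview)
The paper does not prove this theorem itself; it is quoted from Ziller and from Goresky--Hingston, and your sketch is precisely Ziller's Morse--Bott argument (with the Serre spectral sequence alternative from Goresky--Hingston mentioned at the end). One point worth sharpening: the vanishing of the connecting homomorphisms in the long exact sequences of the pairs $(\Lambda^{\le e_r+\varepsilon},\Lambda^{\le e_r-\varepsilon})$ is not a matter of whether the pieces occupy \emph{coincident} degrees, but of whether, for every degree $k$ with $H_{k-\lambda_r}(T^1S^n)\ne0$, one has $H_{k-1}(\Lambda^{\le e_r-\varepsilon})=0$; adjacency, not coincidence, is what controls the boundary maps. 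Your degree bookkeeping in fact establishes this stronger condition for $n\ge4$, so the conclusion stands, and your separate handling of $n=3$ via the Lie-group splitting $\Lambda S^3\cong S^3\times\Omega S^3$ is the standard way to close that case.
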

$\lambda_r$ is in fact the Morse index of a $r$-fold iterated prime closed geodesic of the round sphere $(S^n,g_{st})$ (\cite[Example 2.5.7 (iib)]{MR666697}).

The above graded modules equipped with the Chas-Sullivan product have the following algebra structure (\cite[Theorem 2]{Cohen2004}):
\begin{itemize}
 \item for $n\geq3$ and $n$ odd
 \begin{equation*}
  \big(H_{*+n}(\Lambda S^n;\Z),\ast\big)\cong\bigwedge(a)\otimes\Z[x]
 \end{equation*}
 with $|a|=-n$ and $|x|=n-1$ on the right-hand side
\item and for $n\geq2$ and $n$ even
 \begin{equation*}
  \big(H_{*+n}(\Lambda S^n;\Z),\ast\big)\cong\big(\bigwedge(s_1)\otimes\Z[a,t]\big)/(a^2,s_1\otimes a,2a\otimes t)
 \end{equation*}
 with $|a|=-n$, $|s_1|=-1$, $|t|=2n-2$ on the right-hand side.
\end{itemize}
The elements on the left-hand side are all shifted upward in degree by $n$, so that the generators on the right correspond to the generators of the free loop space homology indicated as follows:
\begin{align*}
 A\leftrightarrow a,\;\sigma_1\leftrightarrow s_1,\;U\leftrightarrow x,\;\Theta\leftrightarrow t\,,
\end{align*}
where
\begin{itemize}
 \item $A$ generates $\in H_0(\Lambda S^n;\Z)$ and corresponds to the generator of $H_0(S^n;\Z)$. It generates the intersection algebra of $S^n$: $A\bullet A=0$.
 \item $\sigma_1$ generates $H_{n-1}(\Lambda S^n;\Z)$.
 \item $E\in H_{n}(\Lambda S^n;\Z)$ corresponds to the orientation class of $S^n$. It is thus the unit of the intersection algebra $\big(H_{*+n}(S^n;\Z),\bullet\big)\cong\bigwedge(a)$ on $S^n$.  Here $a$ has degree $-n$ and is sent to $A$ under this isomorphism.
 \item $U\in H_{2n-1}(\Lambda S^n;\Z)$ exists only for odd $n$ and is sent, via $j_!$, to $x$ which generates the Pontrjagin algebra $\big(H_*(\Omega S^n;\Z),\star\big)\cong\Z[x]$ with $|x|=n-1$. 
 \item $\Theta$ generates $H_{3n-2}(\Lambda S^n;\Z)$.
\end{itemize}

It is important to use the sign correction as in \cite{goresky2009} for the definition of $\ast$ here. Otherwise the commutativity behaviour on the two sides to the algebra homomorphisms differ.\\
For us it is important to know that 
\begin{itemize}
 \item for $n\geq3$ and $n$ odd each homology class can be written as a (Chas-Sullivan) product of $A$ and $U$. 
\item and for $n\geq4$ and $n$ even each homology class can be written as a (Chas-Sullivan) product of $A$, $\sigma_1$ and $\Theta$.
\item the class $E$ is a neutral element for the Chas-Sullivan product for $n$ even and odd.
\end{itemize}
Moreover, for any $n$ there is a nonnilpotent homology class $\Theta\in H_{3n-2}(\Lambda S^n;\Z)$:
\begin{equation*}
 \Theta^k=\Theta^{\ast k}\neq0,\;\forall k\in \N.
\end{equation*}
In the odd case, $\Theta=U\ast U=U^{\ast2}$. Furthermore, multiplication with this class is an isomorphism 
\begin{equation*}
 \ast\Theta\colon H_k(\Lambda S^n;\Z)\xrightarrow{\cong} H_{k+3n-2-n}(\Lambda S^n;\Z)
\end{equation*} 
for all $k>0$. All of this (and more) is summarized in Lemma 5.4 of \cite{hingston2013}, proofs are given in \cite[Section 13]{goresky2009}.

\section{The transfer and the transfer product}

\subsection{The transfer} \label{transfer}
Let $G$ be a finite and discrete topological group acting continuously on $\Lambda=\Lambda M$, where $M$ is a compact, connected, smooth manifold of dimension $n$.

If the action is free, then the map $q\colon\Lambda\to\Lambda/G$ is a covering map and we not only have a homomorphism $q_*\colon H_i(\Lambda;R)\to H_i(\Lambda/G;R)$, but also one in the other direction $tr\colon H_i(\Lambda/G;R)\to H_i(\Lambda;R)$ for singular homology with any coefficient group $R$. This homomorphism is called transfer and it is induced by a chain map which can be defined in a very straightforward and natural way on the singular chain complexes: just send a singular simplex of $X/G$ to the sum of its $|G|$ distinct lifts. This is explained in section 3.G of \cite{HatcherAT}.

Unfortunately, no subgroup $G\subset O(2)$ (except the trivial) acts freely on $\Lambda$ and thus the quotient maps $\Lambda\to \Lambda/\vartheta$ and $\Lambda\to \Lambda/\theta$ are not covering maps. Nevertheless, also for nonfree actions of a finite group $G$ on $\Lambda$ we get transfer homomorphisms. They can be constructed in (at least) two ways:
\begin{enumerate}
\item Any orbit map $q\colon X\to X/G$ of a continuous action of a finite discrete group $G$ on a topological space $X$ is a ramified covering map in the sense of \cite{MR698352}. Let $SP^d(X)$ denote the $d$-fold symmetric product of $X$. A $d$-fold ramified covering map is a continuous surjective finite-to-one map $p\colon X\to Y$ for which a continuous map $t_p:Y\to SP^d(X)$ exists. $t_p$ maps a $y\in Y$ to the unordered $d$-tupel of points of $p^{-1}(y)$ counted with multiplicities. In \cite{MR698352}, a transfer homomorphism $tr\colon\wt{H}_i(X/G;R)\to \wt{H}_i(X;R)$ on reduced singular homology for $R=\Z,\Q,\Z_k$ is developed if $X$ and $X/G$ have the homotopy type of connected CW-complexes. It uses the homotopy-theoretic definition of homology.

Alternatively we can refer to \cite{MR809499}, where a definition of a transfer for ramified coverings as a singular chain map is given. This only requires that $X$ is Hausdorff and has the homotopy type of $CW$-complex.
\item If $X$ and $X/G$ both have the homotopy type of CW-complexes and $X$ is paracompact and Hausdorff, then by Theorem 1 of \cite{MR105677} or Corollary 10.4 of \cite{MR226631} \v{C}ech homology and singular homology of $X$ and $X/G$, respectively, are isomorphic and we can apply Theorem 7.2 of Section 7 of Chapter 3 of \cite{MR0413144} to get a transfer $tr\colon H_i(X/G;R)\to H_i(X;R)$ for singular homology with arbitrary coefficients. Since $\Lambda$ is a manifold, it is paracompact and Hausdorff and it has the homotopy type of a countable CW-complex by Corollary 2 of \cite{MR100267}. The same also holds for $\Lambda/G$ for any finite subgroup $G$ of $O(2)$ acting as explained in the introduction: In fact, $\Lambda/G$ is paracompact, Hausdorff since $q\colon\Lambda\to\Lambda/G$ is a perfect map and $\Lambda/G$ is locally contractible since the normal bundle to any orbit is an equivariant vector bundle (\cite[Chapter 1, Theorem 3.1]{MR0413144}, \cite[Theorem 4.4.15]{MR1039321}, \cite[Section 1.5]{MR739783}). Moreover, since $\Lambda$ has the $O(2)$-homotopy type of $O(2)$-CW-complex, $\Lambda/G$ has the homotopy type of countable CW-complex (\cite[Theorem 4.2]{MR997228}, \cite[Theorem A]{MR1049832}, \cite[Chapter II, Proposition 1.16]{MR889050}, \cite[Corollary 5.2.6]{MR1074175}). See also Lemma A.4 in Appendix A of \cite{goresky2009} for a comparison of \v{C}ech and singular homology groups of $\Lambda$.
\end{enumerate}

These transfer homomorphisms have similar properties as the transfer of actual coverings. We now list of some of their properties and we are going to use them later (\cite[Section 2]{MR698352}, \cite[Chapters 3 and 7] {MR0413144}, \cite[Section 2]{MR809499}):
\begin{enumerate}
 \item\label{prop 1} $(q_*\circ tr)(a)=|G|a$.
 \item\label{prop 2} $(tr\circ q_*)(x)=\sum_{g\in G}g_*(x)$.
 \item\label{prop 3} $q_*\colon H_i(X;R)^G\to H_i(X/G;R)$ is an isomorphism if $|G|$ is invertible in $R$.
\end{enumerate}
Here $H_i(X;R)^G:=\{x\in H_i(X;R)|\;g_*(x)=x\text{ for all }g\in G\}$.\\

Note that, provided $|G|$ is invertible in $R$, (\ref{prop 1}) and (\ref{prop 2}) together imply that $H_i(X;R)\cong ker(q_*)\oplus im(tr)=ker(q_*)\oplus H_i(X;R)^G$ and that $q_*$ is surjective. Hence, in particular, property (\ref{prop 3}) follows: Define $s\coloneqq tr\circ\cdot\frac{1}{|G|}$. Property (\ref{prop 1}) gives 
\begin{equation*}
    q_*\circ s=id_{H_*(X/G;R)}.
\end{equation*}
Thus $q_*$ is surjective, $tr$ is injective, $im(s)=im (tr)$  and $H_i(X;R)\cong ker(q_*)\oplus im(tr)$. Restricted to $H_i(X;R)^G$, $s$ is also a left-inverse for $q_*$ by property (\ref{prop 2}): For $x\in H_i(X;R)^G$ we have
\begin{equation*}
    tr\circ q_*(x)=\sum_{g\in G}g_*(x)=|G|x,
\end{equation*}
i.e. $s\circ q_*(x)=x$, as $tr$ and $q_*$ are linear. Therefore $H_i(X;R)^G\subset im(s)=im (tr)$. Finally, the two properties together imply that $H_i(X;R)^G=im (tr)$: If $x=s(a)$, then $q_*(x)=q_*\circ s(a)=a$ by the first property and hence $s\circ q_*(x)=x$. Property (\ref{prop 2}) now gives
\begin{equation*}
    x=s\circ q_*(x)=tr\circ q_*\big(\tfrac{x}{|G|}\big)=\sum_{g\in G}g_*\big(\tfrac{x}{|G|}\big),
\end{equation*}
which is certainly invariant under each $g_*$ for all $g\in G$.

Furthermore, in this case, i.e. when $|G|$ is invertible in $R$, the transfer maps are uniquely determined by the Properties \ref{prop 1}-\ref{prop 3}: For each $a\in H_i(X/G;R)$ there is a $x\in H_i(X;R)^G$ with $q_*(x)=a$, hence if $tr$ and $\wt{tr}$ are two transfer maps, then $tr(a)-\wt{tr}(a)=tr\left(q_*(x)\right)-\wt{tr}\left(q_*(x)\right)=\sum_{g\in G}g_*(x)-\sum_{g\in G}g_*(x)=0$. In all of our applications $|G|$ will be invertible in $R$, in fact we will choose $R=\Q$. Moreover, attention will only be given to $G$-actions for which $\Lambda$ and $\Lambda/G$ have the homotopy types of $CW$-complexes. Hence we can use either of the definitions of the transfer referred to above. Nevertheless, we are going to use the transfer definition given in \cite{MR809499}, since it also covers the cases of $G$-actions for which $\Lambda/G$ might not have the homotopy type of a $CW$-complex.

\subsection{The transfer product}
\begin{dfn} Let $M$ be a compact, connected, oriented manifold. Let $G$ be any finite, discrete topological group acting continuously on $\Lambda=\Lambda M$. Then the "transfer product"
\begin{equation*}
P_G\colon H_i(\Lambda/G;R)\times H_j(\Lambda/G;R)\to H_{i+j-n}(\Lambda/G;R) 
\end{equation*} 
on singular homology with arbitrary coefficients $R$ is defined by
\begin{equation}\label{transfer product}
 P_G(a,b)\coloneqq q_*\big(tr(a)\ast tr(b)\big)
\end{equation} 
for classes $a,b\in H_*(\Lambda/G;R)$, where $\ast$ is the Chas-Sullivan product.
\end{dfn}

The group actions of our interest will induce Chas-Sullivan algebra isomorphisms, hence the following lemma will be useful: 

\begin{lem}
For arbitrary coefficients $R$, if $g_*\colon H_i(\Lambda M;R)\to H_i(\Lambda M;R)$ is a Chas-Sullivan algebra isomorphism for all $g\in G$, then $H_*(\Lambda;R)^G$ is invariant under the Chas-Sullivan product. Hence we can write $\big(H_*(\Lambda;R)^G,\ast)$ to denote the subalgebra of $G$-invariant classes.
\end{lem}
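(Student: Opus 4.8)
The plan is to show that if $x,y \in H_*(\Lambda M;R)^G$ then $x \ast y \in H_*(\Lambda M;R)^G$, which is exactly the closure condition making $\big(H_*(\Lambda;R)^G,\ast\big)$ a subalgebra. So fix $g \in G$; by hypothesis $g_* \colon H_*(\Lambda M;R) \to H_*(\Lambda M;R)$ is an algebra homomorphism for the Chas--Sullivan product, i.e. $g_*(a \ast b) = g_*(a) \ast g_*(b)$ for all $a,b$. Now take $x,y \in H_*(\Lambda;R)^G$, meaning $g_*(x) = x$ and $g_*(y) = y$. Then
\begin{equation*}
 g_*(x \ast y) = g_*(x) \ast g_*(y) = x \ast y,
\end{equation*}
so $x \ast y$ is $g_*$-invariant. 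Since $g \in G$ was arbitrary, $x \ast y \in H_*(\Lambda;R)^G$.

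Thus the only thing that really needs justification beyond this one-line computation is that $H_*(\Lambda;R)^G$ is a graded $R$-submodule of $H_*(\Lambda;R)$ — but this is immediate, since each $g_*$ is a graded $R$-module homomorphism and the fixed set of a family of $R$-linear degree-preserving maps is a graded $R$-submodule (intersection of kernels of the $g_* - \mathrm{id}$). Hence $\big(H_*(\Lambda;R)^G,\ast\big)$ is well-defined as a (non-unital, in general) graded $R$-algebra sitting inside $\big(H_*(\Lambda;R),\ast\big)$.

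There is essentially no obstacle here: the statement is a formal consequence of "the fixed-point submodule of a family of algebra endomorphisms is a subalgebra." The only point worth flagging is that one uses the hypothesis that each $g_*$ is an algebra \emph{homomorphism} (not merely a module map) — this is what the earlier lemma about the $G$-actions of interest is meant to provide, and in the applications it will follow from the fact that the $O(2)$-action is by isometries of $(\Lambda M, g_1)$ compatible with loop concatenation up to the relevant homotopies. I would simply write the displayed computation above as the proof and remark that it shows closure, so the Chas--Sullivan product restricts to $H_*(\Lambda;R)^G$.
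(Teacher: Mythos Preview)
Your proof is correct and is essentially identical to the paper's own proof: the paper simply writes $g_*(x\ast y)=g_*(x)\ast g_*(y)=x\ast y$ for $x,y\in H_*(\Lambda;R)^G$ and any $g\in G$. Your additional remarks about the graded $R$-submodule structure and the source of the algebra-homomorphism hypothesis are fine context but not needed for the argument.
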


\begin{proof}
Let $x,y$ be elements of $H_*(\Lambda;R)^G$, then $g_*(x\ast y)=g_*(x)\ast g_*(y)=x\ast y$ for all $g\in G$.
\end{proof}

Note that, in general, the submodule $ker(q_*)$ is not invariant under an algebra action of $G$ on $H_*(\Lambda;R)$.

\begin{thm}\label{alg iso}
Assume that $|G|$ is invertible in $R$ and that $G$ induces Chas-Sullivan algebra isomorphisms. Then $\big(H_*(\Lambda/G;R),P_G\big)$ is up to scaling isomorphic to the restriction of the Chas-Sullivan product to the $G-$invariant classes. More precisely,
\begin{align*}
  q_*\colon\big(H_*(\Lambda;R)^G,\ast\big)&\xrightarrow{\cong} \big(H_*(\Lambda/G;R),P_G\big)
\end{align*}
and
\begin{align*}
  tr\colon\big(H_*(\Lambda/G;R),P_G\big)&\xrightarrow{\cong}\big(H_*(\Lambda;R)^G,\ast\big)
\end{align*}
are algebra isomorphisms up to scaling, satisfying
\begin{align*}
  q_*(x\ast y)&=\frac{1}{|G|^2}P_G\big(q_*(x),q_*(y)\big),\\
  tr\big(P_G(a,b)\big)&=|G|tr(a)\ast tr(b).
\end{align*}
In particular, $\big(H_*(\Lambda/G;R),P_G\big)$ is an associative and graded-commutative algebra.
\end{thm}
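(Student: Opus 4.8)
The plan is to assemble Theorem \ref{alg iso} directly from the three preceding lemmas together with property (\ref{prop 3}) of the transfer. First I would observe that, since $|G|$ is invertible in $R$ and $G$ induces Chas-Sullivan algebra isomorphisms, the submodule $H_*(\Lambda;R)^G$ is a subalgebra of $\big(H_*(\Lambda;R),\ast\big)$ by the second lemma of this subsection, so both sides of the claimed isomorphism are honest algebras and the statement is meaningful. Next, property (\ref{prop 3}) tells us that $q_*\colon H_*(\Lambda;R)^G\to H_*(\Lambda/G;R)$ is an isomorphism of graded $R$-modules; by the discussion in Section \ref{transfer} its inverse is $s=\tfrac{1}{|G|}\,tr$, and $tr$ restricted to its image $H_*(\Lambda;R)^G$ is likewise a module isomorphism. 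So the only thing left is to check that these two mutually inverse module isomorphisms intertwine the products up to the scalars claimed.

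For the multiplicativity of $q_*$ I would simply invoke the third lemma above: for $x,y\in H_*(\Lambda;R)^G$ it gives $P_G\big(q_*(x),q_*(y)\big)=|G|^2\,q_*(x\ast y)$, which rearranges to the displayed identity $q_*(x\ast y)=\tfrac{1}{|G|^2}P_G\big(q_*(x),q_*(y)\big)$. For the multiplicativity of $tr$ I would invoke the first lemma of this subsection: for $a,b\in H_*(\Lambda/G;R)$ it gives $tr\big(P_G(a,b)\big)=|G|\,tr(a)\ast tr(b)$, which is exactly the second displayed identity and shows $tr$ is an algebra homomorphism up to the scalar $|G|$. Since both maps are already module isomorphisms, being algebra homomorphisms up to a unit scalar makes them algebra isomorphisms up to scaling, and one can even check directly that the two scalars are consistent: applying $tr$ to $q_*(x\ast y)=\tfrac{1}{|G|^2}P_G(q_*x,q_*y)$ and using property (\ref{prop 1}) together with property (\ref{prop 2}) recovers $|G|(x\ast y)=\tfrac{1}{|G|^2}\cdot|G|\cdot tr(q_*x)\ast tr(q_*y)=\tfrac{1}{|G|^2}\cdot|G|\cdot|G|x\ast|G|y$, an identity, so the normalizations are compatible.

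Finally, the associativity and graded-commutativity of $\big(H_*(\Lambda/G;R),P_G\big)$ follow formally: these are properties preserved under an algebra isomorphism up to a nonzero scalar, and the Chas-Sullivan product $\ast$ on $H_*(\Lambda;R)$ — hence its restriction to the subalgebra $H_*(\Lambda;R)^G$ — is associative and graded-commutative by the properties recalled after the definition of $\ast$. One small point to address carefully is the grading convention in "graded-commutative": since $P_G$ shifts degree by $-n$, the sign rule for $P_G$ should be stated with the shifted degrees, matching the $(-1)^{(|a|-n)(|b|-n)}$ rule for $\ast$; transporting that rule through $q_*$, which preserves degree, is immediate. The main (and only mild) obstacle is bookkeeping of the scalar factors and being sure that "up to scaling" is used consistently for $q_*$, $tr$, and their composite — there is no substantive difficulty, as all the work has been done in the three lemmas and in the transfer properties (\ref{prop 1})–(\ref{prop 3}).
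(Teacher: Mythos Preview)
Your proposal is correct and follows essentially the same route as the paper: invoke property~(\ref{prop 3}) to get that $q_*$ and $tr$ are module isomorphisms between $H_*(\Lambda;R)^G$ and $H_*(\Lambda/G;R)$, and then cite the three preceding lemmas to obtain the two displayed scaling identities and hence the algebra-isomorphism-up-to-scaling statement. The paper's own proof is just the one-line ``Property~(\ref{prop 3}) gives the module isomorphisms; the above Lemmata then yield the theorem,'' so your additional checks (scalar compatibility, the shifted grading in the commutativity sign) are elaborations rather than a different argument.
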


\begin{proof}
Let $x,y$ be elements of $H_*(\Lambda;R)^G$, then, by property (\ref{prop 2}), 
\begin{align*}
    P_G\big(q_*(x),q_*(y)\big)&=q_*\big(tr\circ q_*(x)\ast tr\circ q_*(x)\big)=q_*\Big(\big(\sum_{g\in G}g_*(x)\big)\ast \big(\sum_{h\in G}h_*(y)\big)\Big)\\
    &=q_*(|G|x\ast|G|y)=|G|^2q_*(x\ast y)\,.
\end{align*}
So $q_*$ is in fact algebra homomorphism for arbitrary coefficients $R$. If $|G|$ is invertible in $R$ then $im(tr)=H_*(\Lambda;R)^G$ and hence
\begin{align*}
   tr\big(P_G(a,b)\big)&=tr\circ q_*\big(tr(a)\ast tr(b)\big)=\sum_{g\in G}g_*\left(tr(a)\ast tr(b)\right)=\sum_{g\in G}g_*\left(tr(a)\right)\ast g_*\left(tr(b)\right)\\
   &=\sum_{g\in G}tr(a)\ast tr(b)=|G|tr(a)\ast tr(b)
\end{align*}
for $a,b\in H_*(\Lambda/G;R)$
by property (\ref{prop 2}) and the previous lemma.

That both homomorphisms are isomorphisms if $|G|$ is invertible in $R$ follows since Property (\ref{prop 3}) says that $q_*\colon H_i(\Lambda;R)^G\to H_i(\Lambda/G;R)$ and $tr\colon H_i(\Lambda/G;R)\to H_i(\Lambda;R)^G$ are module isomorphisms for all $i$ in this case.
\end{proof}

\begin{prp}
Assume that $g\colon\Lambda\to \Lambda$ is the identity on the point curves (= constant loops) for all $g\in G$. Also assume that $|G|$ is invertible in $R$ and that $G$ induces Chas-Sullivan algebra isomorphisms. Then the pair $\big(H_*(\Lambda/G;R),P_G\big)$ is also a unital algebra.
\end{prp}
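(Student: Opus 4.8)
The plan is to transport the Chas--Sullivan unit $E=c_*([M])\in H_n(\Lambda;R)$ across the quotient map, rescale it by the right constant, and check directly that the result is a two-sided identity for $P_G$. Since it is already known that $P_G$ is an associative, graded-commutative product (Theorem \ref{alg iso}), it suffices to exhibit such a unit.

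First I would record that $E$ is $G$-invariant. Note that $P_G$ is only defined for $M$ oriented, so $E=c_*([M])$ exists and, as recalled in Section 1, is a two-sided unit for $\ast$. By hypothesis each $g\in G$ restricts to the identity on the submanifold of constant loops, i.e. $g\circ c=c$, where $c:M\hookrightarrow\Lambda M$ is the inclusion of point curves; applying $H_*$ gives $g_*(E)=g_*c_*([M])=c_*([M])=E$, so $E\in H_n(\Lambda;R)^G$.

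Next, since $|G|$ is invertible in $R$, set
\begin{equation*}
\mathcal{E}:=\tfrac{1}{|G|^2}\,q_*(E)\in H_n(\Lambda/G;R).
\end{equation*}
Using linearity of $tr$, property (\ref{prop 2}) of the transfer, and $g_*(E)=E$, one computes $tr(\mathcal{E})=\tfrac{1}{|G|^2}\sum_{g\in G}g_*(E)=\tfrac{1}{|G|}E$. Then for any $a\in H_*(\Lambda/G;R)$,
\begin{align*}
P_G(\mathcal{E},a)&=q_*\big(tr(\mathcal{E})\ast tr(a)\big)=\tfrac{1}{|G|}\,q_*\big(E\ast tr(a)\big)=\tfrac{1}{|G|}\,q_*\big(tr(a)\big)=a,
\end{align*}
where the third equality uses $E\ast tr(a)=tr(a)$ and the last uses property (\ref{prop 1}). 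The identity $P_G(a,\mathcal{E})=a$ follows in the same way, since $E$ is a \emph{two-sided} unit for $\ast$ (or from graded-commutativity of $P_G$). Hence $\mathcal{E}$ is a two-sided unit and $\big(H_*(\Lambda/G;R),P_G\big)$ is a unital algebra.

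This is really a bookkeeping argument rather than anything deep; the only point needing care is the normalizing constant. The correct scalar is $\tfrac{1}{|G|^2}$ — neither $q_*(E)$ nor $\tfrac{1}{|G|}q_*(E)$ works — which is exactly why $|G|$ must be invertible in $R$, and the hypothesis that each $g$ fixes the constant loops is used precisely once, to obtain $\sum_{g\in G}g_*(E)=|G|E$ in the evaluation of $tr(\mathcal{E})$.
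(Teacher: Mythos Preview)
Your proof is correct and follows essentially the same approach as the paper: define the unit as $\tfrac{1}{|G|^2}q_*(E)$, use the $G$-invariance of $E$ coming from the assumption on point curves, and verify the unit property via the transfer identities. The only cosmetic difference is that the paper writes an arbitrary class as $a=q_*(x)$ with $x\in H_*(\Lambda;R)^G$ and computes $tr(a)=|G|x$ via property~(\ref{prop 2}), whereas you compute $tr(\mathcal{E})=\tfrac{1}{|G|}E$ directly and then apply property~(\ref{prop 1}) to $q_*(tr(a))$; the bookkeeping is equivalent.
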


\begin{proof}
We only need to show that the unit $E$ of $\big(H_*(\Lambda;R),\ast)$ is an element of $\big(H_*(\Lambda;R)^G,\ast)$. Since $M$ is a compact, connected, oriented manifold of dimension $n$, there is a unit $E\in H_n(\Lambda;R)$ for the Chas-Sullivan product. We define
\begin{equation*}
 e\coloneqq\frac{1}{|G|^2}q_*(E).
\end{equation*}
This in nonzero, since $E$ is the orientation class of the base manifold $M$, which is equivariantly embedded into $\Lambda M$ via the point curves, and so $g_*(E)=E$ for all $g\in G$ by assumption. For an arbitrary $a\in H_i(\Lambda/G;R)$ there is a unique $x\in H_i(\Lambda;R)^G$ with $q_*(x)=a$. We compute, using the properties of the transfer, that
\begin{align*}
 P_G(a,e)&=\frac{1}{|G|^2}q_*\big((tr\circ q_*)(x)\ast (tr\circ q_*)(E)\big)=\frac{1}{|G|^2}q_*\big(|G|x\ast|G|E\big)=q_*(x)=a.
\end{align*}
Likewise $P_G(e,a)=a$.
\end{proof}
We thus have proved: 
\begin{thm}\label{rational algebra}
Let $M$ be a compact, connected, oriented manifold and $\Lambda=\Lambda M$ its free loop space. Let a finite discrete group $G$ act continuously on $\Lambda$. Assume that $G$ induces Chas-Sullivan algebra isomorphisms which are the identity on point curves. Then $\big(H_*(\Lambda/G;\Q),P_G\big)$ is an associative, graded-commutative unital $\Q$-algebra.\qed
\end{thm}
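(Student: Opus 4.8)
The plan is to deduce the statement directly from the results already assembled in this subsection; the only genuinely new ingredient is the observation that over $\Q$ the hypothesis ``$|G|$ is invertible in $R$'' is automatic, so the full force of Theorem \ref{alg iso} and of the Proposition immediately preceding the statement is available.

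First I would record that, since we work with $R=\Q$, the integer $|G|$ is invertible, so property (\ref{prop 3}) applies and both $q_*\colon H_i(\Lambda;\Q)^G\to H_i(\Lambda/G;\Q)$ and $tr\colon H_i(\Lambda/G;\Q)\to H_i(\Lambda;\Q)^G$ are isomorphisms for every $i$. Because $G$ is assumed to act by Chas--Sullivan algebra isomorphisms, the submodule $H_*(\Lambda;\Q)^G$ is closed under $\ast$ (the Lemma above), so $\big(H_*(\Lambda;\Q)^G,\ast\big)$ is a bona fide subalgebra of the Chas--Sullivan algebra and is therefore associative and graded-commutative. Theorem \ref{alg iso} then identifies $\big(H_*(\Lambda/G;\Q),P_G\big)$ with this subalgebra up to scaling, the scaling being governed by $q_*(x\ast y)=\tfrac1{|G|^2}P_G\big(q_*(x),q_*(y)\big)$ and $tr\big(P_G(a,b)\big)=|G|\,tr(a)\ast tr(b)$; since these relations are multiplicative in the obvious sense, associativity and graded-commutativity transport to $P_G$. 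Concretely one writes $a=q_*(x)$, $b=q_*(y)$, $c=q_*(z)$ with $x,y,z$ $G$-invariant and checks $P_G(P_G(a,b),c)=|G|^4 q_*\big((x\ast y)\ast z\big)=|G|^4 q_*\big(x\ast(y\ast z)\big)=P_G(a,P_G(b,c))$, and likewise for the commutativity sign.

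For unitality I would invoke the Proposition stated just before the theorem: its hypotheses are exactly that $|G|$ is invertible in $R$ and that every $g\in G$ is the identity on the point curves, both of which hold here. In particular the orientation class $E\in H_n(\Lambda;\Q)$, which is the Chas--Sullivan unit, satisfies $g_*(E)=E$ for all $g\in G$ because $E=c_*([M])$ and each $g$ fixes the constant-loop embedding $c\colon M\hookrightarrow\Lambda M$; hence $E\in H_*(\Lambda;\Q)^G$, the class $e:=\tfrac1{|G|^2}q_*(E)$ is well defined and nonzero, and the computation carried out in that Proposition (using properties (\ref{prop 1}) and (\ref{prop 2})) gives $P_G(a,e)=P_G(e,a)=a$ for every $a\in H_*(\Lambda/G;\Q)$.

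I do not expect any substantive obstacle: the real work lies in the earlier construction of the transfer together with properties (\ref{prop 1})--(\ref{prop 3}) and in Theorem \ref{alg iso}. The only point needing a little care is the bookkeeping of the scaling factors $|G|$ and $|G|^2$ when verifying that an ``isomorphism up to scaling'' genuinely carries over associativity, graded-commutativity and the existence of a two-sided unit; but this is precisely what the Lemmata preceding Theorem \ref{alg iso} and the Proposition preceding the statement already check, so the proof reduces to assembling those citations.
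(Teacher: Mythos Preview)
Your proposal is correct and matches the paper's approach exactly: the paper itself treats this theorem as a summary of what has just been established, writing ``We thus have proved:'' before the statement and closing it with a \qed, relying precisely on Theorem \ref{alg iso} for associativity and graded-commutativity and on the preceding Proposition for the unit. Your additional explicit check of the scaling bookkeeping for associativity is more detail than the paper gives, but is harmless and correct.
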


In particular, for $n\geq3$ the algebras $\big(H_*(\Lambda S^n/\vartheta;\Q),P_\vartheta\big)$ and $\big(H_*(\Lambda S^n/\theta;\Q),P_\theta\big)$ are associative, graded-commutative and unital, as we will see in the next section.\\

What comes next is satisfyed by the action $\vartheta$ but not by $\theta$ since $\vartheta$ preserves base points while $\theta$ does not: 

\begin{prp}\label{ev/G homo}
 In the situation of the above theorem, if all $g$ in addition leave base points fixed, i.e. $g(\gamma)(0)=\gamma(0)$ for all $\gamma\in\Lambda M$ and all $g\in G$, then
 \begin{equation*}
  (ev_0/G)_*\colon\big(H_*(\Lambda/G;\Q),P_G\big)\to\big(H_*(M;\Q),\bullet)
 \end{equation*}
is an algebra homomorphism up to scaling with scaling factor $|G|^2$.
\end{prp}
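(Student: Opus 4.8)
The plan is to reduce the statement to Proposition \ref{alg hom}(1), which says that $({ev_0})_*\colon (H_*(\Lambda;\Q),\ast)\to (H_*(M;\Q),\bullet)$ is an algebra homomorphism, by passing through the transfer description of $P_G$ given in Theorem \ref{alg iso}. The first observation is that since every $g\in G$ fixes base points, the fibration $ev_0\colon\Lambda M\to M$ is $G$-invariant in the sense that $ev_0\circ g=ev_0$ for all $g$; hence $ev_0$ descends to a well-defined continuous map $ev_0/G\colon\Lambda M/G\to M$ with $(ev_0/G)\circ q=ev_0$, so on homology $(ev_0/G)_*\circ q_*=(ev_0)_*$.

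Next I would unwind $P_G$ using its definition $P_G(a,b)=q_*\big(tr(a)\ast tr(b)\big)$ and compute
\begin{align*}
(ev_0/G)_*\big(P_G(a,b)\big)
&=(ev_0/G)_*\circ q_*\big(tr(a)\ast tr(b)\big)
=(ev_0)_*\big(tr(a)\ast tr(b)\big)\\
&=(ev_0)_*\big(tr(a)\big)\bullet (ev_0)_*\big(tr(b)\big),
\end{align*}
where the last equality is Proposition \ref{alg hom}(1). It remains to relate $(ev_0)_*\circ tr$ to $(ev_0/G)_*$. For this I would use property (\ref{prop 1}) of the transfer together with the fact that, since $M$ is a connected manifold, the $G$-action on $H_*(M;\Q)$ through which $ev_0$ is equivariant is trivial: indeed $ev_0\circ g=ev_0$ forces $g_*=\mathrm{id}$ on the image, and more to the point $(ev_0)_*\circ tr\circ q_*=(ev_0)_*\circ(\sum_{h\in G}h_*)=\sum_{h\in G}(ev_0\circ h)_*=|G|\,(ev_0)_*$ by property (\ref{prop 2}); since $q_*$ is surjective rationally (Theorem \ref{alg iso}), we get $(ev_0)_*\circ tr=|G|\,(ev_0/G)_*$ after precomposing the identity $(ev_0/G)_*\circ q_*=(ev_0)_*$ — concretely, for $a\in H_*(\Lambda/G;\Q)$ pick $x$ with $q_*(x)=a$, then $(ev_0)_*(tr(a))=(ev_0)_*(tr(q_*(x)))=|G|(ev_0)_*(x)=|G|(ev_0/G)_*(q_*(x))=|G|(ev_0/G)_*(a)$.

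Plugging this in gives
\begin{equation*}
(ev_0/G)_*\big(P_G(a,b)\big)=|G|\,(ev_0/G)_*(a)\bullet |G|\,(ev_0/G)_*(b)=|G|^2\,(ev_0/G)_*(a)\bullet (ev_0/G)_*(b),
\end{equation*}
which is exactly the asserted homomorphism-up-to-scaling with factor $|G|^2$. The main obstacle is not any deep computation but rather the bookkeeping of which transfer property is being invoked where: one must be careful that the identity $(ev_0)_*\circ tr=|G|(ev_0/G)_*$ genuinely follows from properties (\ref{prop 1})–(\ref{prop 2}) and the surjectivity of $q_*$ over $\Q$, rather than circularly assuming what one wants; the clean way, as above, is to use $tr=|G|\cdot s$ with $q_*\circ s=\mathrm{id}$ and $tr\circ q_*=\sum_{g\in G}g_*$ from Section \ref{transfer}. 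Everything else is formal, using that $ev_0$ is strictly $G$-invariant, not merely equivariant.
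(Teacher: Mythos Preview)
Your proof is correct and follows essentially the same route as the paper: both use $(ev_0/G)_*\circ q_*=(ev_0)_*$, apply Proposition~\ref{alg hom}(1) to get $(ev_0)_*(tr(a)\ast tr(b))=(ev_0)_*(tr(a))\bullet(ev_0)_*(tr(b))$, and then pick a preimage $x$ with $q_*(x)=a$ and use transfer property~(\ref{prop 2}) to obtain $(ev_0)_*(tr(a))=|G|\,(ev_0/G)_*(a)$. The only cosmetic difference is that the paper chooses $x\in H_*(\Lambda;\Q)^G$ (so that $tr(q_*(x))=|G|x$ directly), whereas you allow an arbitrary preimage and absorb the $\sum_{h\in G}h_*$ using $ev_0\circ h=ev_0$; these are equivalent.
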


\begin{proof}
If base points are fixed by $G$, then $ev_0\coloneqq\Lambda M\to M$ factors through $\Lambda M/G$. Hence, the proposition follows by Proposition \ref{alg hom CS Pont} and Theorem \ref{alg iso}.
\end{proof}

If each $g\in G$ leaves base points fixed, i.e. if $g(\gamma)(0)=\gamma(0)$ for all $\gamma\in\Lambda M$ and all $g\in G$, then we have the following commutative diagram
\begin{equation*}
\xymatrixrowsep{1pc}
\xymatrixcolsep{4pc}
\xymatrix{
\Omega\ar[r]^-j\ar[d]&\Lambda\ar[r]^-{ev_0}\ar[d]&M\ar[d]\\
\Omega/G\ar[r]\ar[r]^-{j/G}&\Lambda/G\ar[r]^-{ev_0/G}& M,
 }
\end{equation*}
where the lower row is also a fibration (since $ev_0:\Lambda\rightarrow M$ is locally trivial, the action is fibrewise and the base is paracompact). Also both rows of the diagram
\begin{equation*}
\xymatrixrowsep{1pc}
\xymatrixcolsep{4pc}
\xymatrix{
\Omega\times EG\ar[r]^-{j\times id}\ar[d]&\Lambda\times EG\ar[r]^-{ev_0\times id}\ar[d]&M\times EG\ar[d]\\
\Omega\times_G EG\ar[r]\ar[r]^-{(j\times id)/G}&\Lambda\times_G EG\ar[r]^-{(ev_0\times id)/G}& M\times BG
 }
\end{equation*}
are fibrations. If we use rational coefficients it does not matter whether we are working with quotients or homotopy quotients:

\begin{lem}
Let $G$ be a finite discrete group acting continuously on a paracompact Hausdorff space $X$ that is homotopy equivalent to a CW-complex. Then
 \begin{equation}
 H_i^{G}(X;\Q)\cong H_i(X/G;\Q).
\end{equation}
\end{lem}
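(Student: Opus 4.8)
The plan is to identify the Borel construction $X \times_G EG$ with the quotient $X/G$ at the level of rational homology by showing that the fibration $X \to X\times_G EG \to BG$ has, rationally, the same homology as its "geometric quotient" version. The key observation is that for a finite group $G$, the classifying space $BG$ has the rational homology of a point: $\widetilde{H}_*(BG;\Q) = 0$, since $H_i(BG;\Z)$ is annihilated by $|G|$ for $i>0$ (a standard transfer argument for the universal cover $EG \to BG$). First I would consider the map of fibrations induced by $EG \to \mathrm{pt}$,
\begin{equation*}
\xymatrix{
X\times EG \ar[r]\ar[d] & X\times_G EG \ar[r]\ar[d] & BG\ar[d]\\
X \ar[r] & X/G \ar[r] & \mathrm{pt}.
}
\end{equation*}
The left vertical map $X\times EG \to X$ is a homotopy equivalence (projection, since $EG$ is contractible), hence a rational homology isomorphism; so by naturality it suffices to compare the total spaces of the two rows.

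The main tool is the Leray–Serre spectral sequence with $\Q$-coefficients for the fibration $X \to X\times_G EG \to BG$. Since $\pi_1(BG) = G$ acts on $H_*(X;\Q)$ and the base is path-connected, the $E^2$-page reads $E^2_{p,q} = H_p(BG; \mathcal{H}_q)$ where $\mathcal{H}_q$ is the local system $q \mapsto H_q(X;\Q)$. Now I would invoke that for a finite group $G$ and a $\Q[G]$-module $V$, the group homology $H_p(G;V)$ vanishes for $p>0$ (because $\Q[G]$ is semisimple, so every module is projective, making $H_*(G;-)$ concentrated in degree $0$), and $H_0(G;V) = V_G = V^G$ (coinvariants equal invariants over $\Q$ by averaging). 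Hence the spectral sequence collapses at $E^2$ to the single column $p=0$, giving $H^G_i(X;\Q) = H_0(BG; \mathcal{H}_i) = H_i(X;\Q)^G$. The same reasoning, or rather a direct argument, identifies $H_i(X/G;\Q)$: since $X$ is paracompact Hausdorff with the homotopy type of a CW-complex and $G$ is finite, the transfer properties (\ref{prop 1})–(\ref{prop 3}) from Section \ref{transfer} apply, and property (\ref{prop 3}) gives $q_*: H_i(X;\Q)^G \xrightarrow{\cong} H_i(X/G;\Q)$. Combining the two identifications yields $H^G_i(X;\Q)\cong H_i(X;\Q)^G \cong H_i(X/G;\Q)$, and one checks naturality so that the composite is induced by the evident map $X\times_G EG \to X/G$.

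The step I expect to be the main obstacle is ensuring all the hypotheses needed to run the transfer machinery and the spectral sequence comparison are genuinely in place — specifically that $X/G$ again has the homotopy type of a CW-complex and is paracompact Hausdorff (needed both for property (\ref{prop 3}) and to know the map $X\times_G EG \to X/G$ is well-behaved on homology). For the applications at hand ($X = \Lambda M$ or $\Omega M$) this is exactly what was established in Section \ref{transfer} via the perfectness of $q$ and the equivariant CW-structure, so I would simply cite that discussion; in the general lemma as stated, the CW and paracompactness hypotheses on $X$ together with finiteness of $G$ are what make it go through, using \cite[Chapter 1, Theorem 3.1]{MR0413144} for the quotient. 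The rest is formal: semisimplicity of $\Q[G]$ does all the real work, so there are no hard estimates or delicate constructions.
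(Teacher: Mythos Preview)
Your proof is correct and reaches the same intermediate identification $H_i(X;\Q)^G$ on both sides, but the route to $H_i^G(X;\Q)\cong H_i(X;\Q)^G$ differs from the paper's. The paper does not invoke a spectral sequence: instead it applies transfer property~(\ref{prop 3}) a second time, now to the free diagonal $G$-action on $X\times EG$ (so that $q_*:H_i(X\times EG;\Q)^G\xrightarrow{\cong}H_i(X\times_G EG;\Q)$), and then uses the K\"unneth formula together with contractibility of $EG$ to identify $H_i(X\times EG;\Q)\cong H_i(X;\Q)$, checking that this isomorphism is $G$-equivariant. Your Leray--Serre argument with the collapse coming from semisimplicity of $\Q[G]$ is a perfectly standard alternative; it is slightly heavier in machinery but makes the vanishing of higher group homology (and hence the role of ``$|G|$ invertible'') very transparent, whereas the paper's double-transfer argument is more elementary and stays entirely within the transfer framework already set up. Your caveat about $X/G$ needing CW homotopy type and paracompactness is well placed: the paper's proof tacitly relies on the same hypotheses for property~(\ref{prop 3}) to apply, and as you note these are verified in Section~\ref{transfer} for the spaces actually used.
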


\begin{proof}[Proof using transfer]
Also $EG$ is a (countable) CW-complex and we have isomorphisms given by the quotient maps (property \eqref{prop 3}):
 \begin{align*}
  H_i^{G}(X;\Q)&:=H_i(X\times_G EG;\Q)\cong H_i(X\times EG;\Q)^G,\\
  H_i(X/G;\Q)&\cong H_i(X;\Q)^G,
 \end{align*}
where again $H_i(\cdot;\Q)^G$ denotes the classes of degree $i$ that are fixed under $g_*$ for all $g\in G$. As $EG$ is contractible the K\"unneth formula gives
\begin{equation*}
 H_i(X\times EG;\Q)\cong\bigoplus_k \big(H_{i-k}(X;\Q)\otimes_\Q H_k(EG;\Q)\big)\cong H_i(X;\Q)\,.
\end{equation*}
It follows that for $y\in H_i(X\times EG;\Q)$, given a generator $1\in H_0(EG;\Q)$ we have $y=x\times 1$ for a unique $x\in H_i(X;\Q)$. Then
\begin{equation*}
 (g\times g)_*(y)=g_*(x)\times g_*(1)=g_*(x)\times 1
\end{equation*}
since any continuous map induces the identity on $H_0(EG;\Q)$ as $EG$ is path-connected. Hence, $y$ is fixed under $G$ if and only if $x$ is and so
\begin{equation*}
 H_i^{G}(X;\Q)\cong H_i(X\times EG;\Q)^G\cong H_i(X;\Q)^G\cong H_i(X/G;\Q)
\end{equation*}
for all $i\in\Z$.
\end{proof}

The above result can be used for a straightforward definition of transfer in rational homology for pairs $(X,A)$ where both $X$ and $A$ are paracompact Hausdorff and of the homotopy type of a CW-complex. For instance, for the pair $(E,E-B)$ where $E\rightarrow B$ is a (smooth) vector bundle over $B$, the following definition can be made: Just take the transfer map of the covering $E\times EG\to E\times_G EG$ and compose with the above isomorphism. Also, naturality of the transfer is easily proved in this way.

Just as in the case of $\Lambda M/G$ one can also define a transfer product on the homology of $\Omega_p/G$ for the submanifold $\Omega_p:=\{\gamma\in\Lambda M|\,\gamma(0)=p\}\subset \Lambda M$ via $q_*\circ tr \star tr$ where $\star$ is the Pontrjagin product on $H_*(\Omega_p)$. We denote it by $P_G^\Omega$.

We write $\Lambda:=\Lambda M$ in the remainder of this subsection. We now consider the inclusion $j_p:\Omega_p\hookrightarrow \Lambda$ of the loops based at a point $p\in M$. We fix some base point $p\in M$ and just write $j$ and $\Omega$ for $j_p,\Omega_p$ from now on. We again use that $ev_0:\Lambda\rightarrow M$ is a submersion (compare Lemma \ref{nb is pullback}). Thus, if we consider the inclusion of the point $\{p\}\hookrightarrow M$, it follows that $ev_0^{-1}(\{p\})=\Omega\subset\Lambda$ is a submanifold of codimension $n$ and has trivial normal bundle $N_\Omega$ isomorphic to a pullback of a (normal) bundle of a point, $N_\Omega\cong {ev_0}^*(T_pM)$. This assures the existence of a Thom class for any coefficients. Thus for arbitrary coefficients also the Gysin map $j_!\colon H_*(\Lambda)\to H_{*-n}(\Omega)$ is defined. For convenience we recall the definition of $j_!$ here: Let $t\colon N\to U$ be a tubular neighbourhood map onto an open neighbourhood $U$ of $\Omega$ in $\Lambda$. The map $j_!$ is then defined as the composition such that the diagram
\begin{equation*}
\xymatrixrowsep{1pc}
\xymatrixcolsep{4pc}
\xymatrix{
H_*(\Lambda)\ar[d]\ar[rr]^-{j_!}&&H_{*-n}(\Omega)\\
H_*(\Lambda,\Lambda-\Omega)\ar[r]_-{\text{excision}}^\cong&H_*(U,U-\Omega)\ar[r]_-{t_*^{-1}}^\cong&H_*(N_\Omega,N_\Omega-\Omega)\ar[u]_-{\text{Thom isomorphism}}^\cong
 }
\end{equation*}
commutes. The definition does not depend on the particular choice of a tubular neighbourhood. Assume now that a group $G$ acts on $\Lambda$ leaving the starting points of loops fixed, so that $j$ is a $G$-equivariant embedding and $j/G$ is defined. Moreover, since $ev_0$ is equivariant with respect to the trivial action on $M$, $G$ acts trivially on $N_\Omega$ and the Thom class passes to the quotient. If in addition also the tubular neighbourhood map $t$ is equivariant $(j/G)_!$ can be defined (see proof below).

\begin{prp}\label{j/G homo}
In the situation of Theorem \ref{rational algebra}, assume that $g\colon\Lambda M\to \Lambda M$ fixes the starting point of each loop for each $g\in G$. Also assume that $\Omega_p\subset\Lambda$ posseses a equivariant tubular neighbourhood for all $p$. Then
 \begin{equation*}
  (j/G)_!\colon\big(H_*(\Lambda M/G;\Q),P_G\big)\to\big(H_{*-n}(\Omega M/G;\Q),P_G^\Omega)
 \end{equation*}
is an algebra homomorphism. Note the index shift.
\end{prp}
The $\Z_2$-action $\vartheta$ for example satisfies the assumptions in this proposition.

\begin{proof}
We have to show that the diagram 
\begin{equation}\label{j! homo}
\xymatrixrowsep{1pc}
\xymatrixcolsep{6pc}
\xymatrix{
H_{i-n}(\Omega/G;\Q)\times H_{j-n}(\Omega/G;\Q)\ar[d]_{tr\times tr}&H_i(\Lambda/G;\Q)\times H_j(\Lambda/G;\Q)\ar[l]_-{(j/G)_!\times (j/G)_!}\ar[d]^{tr\times tr}\\
H_{i-n}(\Omega;\Q)\times H_{j-n}(\Omega;\Q)\ar[d]_\star&H_i(\Lambda;\Q)\times H_j(\Lambda;\Q)\ar[l]_-{j_!\times j_!}\ar[d]^\ast\\
H_{i+j-2n}(\Omega;\Q)\ar[d]_{q_*}&H_{i+j-n}(\Lambda;\Q)\ar[l]_-{j_!}\ar[d]^{q_*}\\
H_{i-n}(\Omega/G;\Q)&H_i(\Lambda/G;\Q)\ar[l]_-{(j/G)_!}
 }
\end{equation}
commutes. The middle square commutes by Proposition \ref{alg hom CS Pont}.

We now show that the top square commutes by proving that the maps in the definition of $j_!$ pass to quotients such that $j_!$ and $(j/G)_!$ commute with $tr$: Let $\tau_\Omega\in H^n(N_\Omega,N_\Omega-\Omega;\Q)$ denote a Thom class of the normal bundle $N_\Omega$ of $\Omega\subset\Lambda$. $\tau_\Omega$ corresponds to $pr^*(\sigma)\in H^n({ev_0}^*(T_pM),{ev_0}^*(T_pM)-\Omega;\Q)$ where $\sigma$ is some generator of $ H^n(\R^n,\R^n-0;\Q)\cong \Q$ and $pr\colon{ev_0}^*(T_pM)\to T_pM$ is the projection. Since $G$ acts on $\Lambda$ leaving the starting point of each loop fixed, for the induced action on $\Omega$ we have the commutative diagram of pullback squares
\begin{equation*}
\xymatrixrowsep{1pc}
\xymatrixcolsep{2.5pc}
 \xymatrix{
   {ev_0}^*(T_pM)\ar[r]_-{Q=q\times id}\ar[d]_-{p}\ar@/^1pc/[rr]^-{pr}&{\big(ev_0/G\big)}^*(T_pM)\ar[r]_-{pr/G}\ar[d]_-{p/G} &T_pM\cong \R^n\ar[d]\\
   \Omega\ar[r]^-q\ar@/_1pc/[rr]_-{ev_0}&\Omega/G\ar[r]^-{ev_0/G}&{\{p\}}
     }
\end{equation*}
where we view $T_pM$ as a trivial $G$-space. Hence the quotient map $Q\coloneqq q\times id$ is the orbit projection of the diagonal action on ${ev_0}^*(T_pM)$, where $q\colon\Omega\to \Omega/G$ is the orbit projection of the action on $\Omega$. As $|G|$ is invertible in $\Q$, it follows that the diagram
\begin{equation}\label{j_!/G}
\xymatrixrowsep{1pc}
\xymatrixcolsep{3pc}
\xymatrix{
H_{i-n}\big({(ev_0/G)}^*(T_pM);\Q\big)\ar[d]_{tr}&H_i\big({(ev_0/G)}^*(T_pM),{(ev_0/G)}^*(T_pM)-\Omega/G;\Q\big)\ar[d]_{tr}\ar[l]_-{\cap (pr/G)^*(\sigma)}\\
H_{i-n}\big({ev_0}^*(T_pM);\Q\big)&H_i\big({ev_0}^*(T_pM),{ev_0}^*(T_pM)-\Omega;\Q\big)\ar[l]_-{\cap pr^*(\sigma)}
 }
\end{equation}
is commutative as we show now: Given $a\in H_i\big({(ev_0/G)}^*(T_pM),{(ev_0/G)}^*(T_pM)-\Omega/G;R\big)$ we compute
\begin{align*}
    Q_*\big(tr(a)\cap pr^*(\sigma)\big)&=Q_*\Big(tr(a)\cap Q^*\big((pr/G)^*(\sigma)\big)\Big)=Q_*\big(tr(a)\big)\cap (pr/G)^*(\sigma)\\
    &=|G|a\cap (pr/G)^*(\sigma)
\end{align*}
by property (\ref{prop 1}) of the transfer and the naturality of the cap product. Hence
\begin{align*}
    |G|tr\big(a\cap &(pr/G)^*(\sigma)\big)=(tr\circ Q_*)\Big(tr(a)\cap Q^*\big((pr/G)^*(\sigma)\big)\Big)\\
    &=\sum_{g\in G}g_*\Big(tr(a)\cap Q^*\big((pr/G)^*(\sigma)\big)\Big)=\sum_{g\in G}g_*\Big(tr(a)\cap (Q\circ g)^*\big((pr/G)^*(\sigma)\big)\Big)\\
    &=\sum_{g\in G}g_*\big(tr(a)\big)\cap Q^*\big((pr/G)^*(\sigma)\big)=\sum_{g\in G}(tr(a)\cap Q^*\big((pr/G)^*(\sigma)\big)\\
    &=|G|\big(tr(a)\cap Q^*\big((pr/G)^*(\sigma)\big)
    =|G|\big(tr(a)\cap pr^*(\sigma)\big)
\end{align*}
by property (\ref{prop 2}) of the transfer, naturality of the cap product and since $im(tr)$ is invariant under $G$. Since multiplication with $|G|$ is an isomorphism
\begin{equation*}
    tr\big(a\cap (pr/G)^*(\sigma)\big)=tr(a)\cap pr^*(\sigma)
\end{equation*}
follows, i.e. diagram \eqref{j_!/G} is commutative. (Notice that in our particular situation we could have shown the above equation by using that ${ev_0}^*(T_pM)=\Omega\times T_pM$, $pr^*(\sigma)=1\times\sigma$ and analogously for ${(ev_0/G)}^*(T_pM)$. Then one can just take the transfer on the factor $\Omega$.)

On the left we extend diagram \eqref{j_!/G} with
\begin{equation*}
\xymatrixrowsep{1pc}
\xymatrixcolsep{3pc}
\xymatrix{
H_{i-n}(\Omega/G;\Q)\ar[d]_{tr_q}&H_{i-n}\big({(ev_0/G)}^*(T_pM);\Q\big)\ar[d]^{tr_Q}\ar[l]_-{(p/G)_*}\\
H_{i-n}(\Omega;\Q)&H_{i-n}\big({ev_0}^*(T_pM);\Q\big)\ar[l]_-{p_*}
 }
\end{equation*}
where the subscripts $q,Q$ in $tr_q$ and $tr_Q$ emphasize to which orbit map the transfer is associated.
This diagram commutes since, as $|G|$ is invertible, we have $a=Q_*(x)$ and hence
\begin{align*}
    tr_q\circ (p/G)_*(a)&=tr_q\circ (p/G)_*\big(Q_*(x)\big)=tr_q\circ q_*\circ p_*(x)=\sum_{g\in G}g_*\big(p_*(x)\big)\\
    &=\sum_{g\in G}p_*\big(g_*(x)\big)=p_*\big(\sum_{g\in G}g_*(x)\big)=p_*\circ tr_Q\circ Q_*(x)=p_*\circ tr_Q(a),
\end{align*}
since $p$ is equivariant. Abbreviating ${ev_0}^*(T_pM)$ with $T$, on the right we extend \eqref{j_!/G} with
\begin{equation*}
\xymatrixrowsep{1pc}
\xymatrixcolsep{1pc}
\xymatrix{
H_i(T/G,T/G-\Omega/G)\ar[d]_{tr}\ar[r]^-{(t/G)_*}_-\cong&H_i(U/G,U/G-\Omega/G)\ar[d]_{tr}\ar[r]_-\cong&H_i(\Lambda/G,\Lambda/G-\Omega/G)\ar[d]_{tr}&H_i(\Lambda/G)\ar[l]\ar[d]_{tr}\\
H_i(T,T-\Omega)\ar[r]_-{t_*}^-\cong&H_i(U,U-\Omega)\ar[r]^-\cong&H_i(\Lambda,\Lambda-\Omega)&H_i(\Lambda)\ar[l]
}
\end{equation*}
(coefficients $\Q$). Here $t\colon T\to U$ is an equivariant tubular neighbourhood map, for example, for the action $\vartheta$ we could take $t(\gamma,v)\coloneqq\lambda(\gamma,v)$ where  $\lambda$ is defined as in section \ref{explicit tubes}. All the other maps involved are also equivariant and we can argue as above that the diagram commutes. The composition of the three diagrams is the commutative diagram
\begin{equation*}
\xymatrixrowsep{1pc}
\xymatrixcolsep{4pc}
\xymatrix{
H_{i-n}(\Omega/G;\Q)\ar[d]_{tr}&H_i(\Lambda/G;\Q)\ar[l]_-{(j/G)_!}\ar[d]^{tr}\\
H_{i-n}(\Omega;\Q)&H_i(\Lambda;\Q)\ar[l]_-{j_!}\,,
 }
\end{equation*}
which shows that the top square of diagram \eqref{j! homo} at the beginning of the proof is commutative. In the same way as above one shows that 
\begin{equation*}
    Q_*(x)\cap (pr/G)^*(\sigma)=Q_*\big(x\cap pr^*(\sigma)\big)
\end{equation*}
holds, from which it follows that also the bottom square of \eqref{j! homo} commutes and hence the whole diagram \eqref{j! homo} is commutative.
\end{proof}

The commutative diagram
\begin{equation*}
\xymatrixrowsep{1pc}
\xymatrix{
|G|j_!\big(tr(a)\big)\star j_!\big(tr(b)\big)&|G|tr(a)\ast tr(b) \ar@{|->}[l]_-{j_!} \ar@{|->}[r]^-{(ev_0)_*}& |G|(ev_0)_*\big(tr(a)\big)\bullet (ev_0)_*\big(tr(b)\big)\\
P_G^\Omega\big((j/G)_!(a),(j/G)_!(b)\big) \ar@{|->}[u]^-{tr}& P_G(a,b) \ar@{|->}[l]^-{(j/G)_!} \ar@{|->}[u]_-{tr} \ar@{|->}[r]_-{(ev_0/G)_*}&|G|^2 (ev_0)_*\big(tr(a)\big)\bullet (ev_0)_*\big(tr(b)\big)\ar@{|->}[u]_-{\cdot|G|^{-1}}^-\cong
 }
\end{equation*}
summarizes the situation. We observe that the left-hand square and property \eqref{prop 1} show that $(j/G)_!=\frac{1}{|G|}q_\Omega\circ j_!\circ tr_\Lambda$ and the right-hand square shows that we could define $(ev_0/G)_*\coloneqq|G|(ev_0)_*\circ tr_\Lambda$ even for actions of $G$ that do not leave base points fixed. Here the subscripts $\Lambda$ and $\Omega$ indicate to the $G$-action on which space the maps are associated to.

\subsection{Group actions under consideration: $G$ a finite subgroup of $O(2)$}\label{our actions}
For a Riemannian manifold $(M,g)$, $O(2)$ acts continuously on $(\Lambda M,g_1)$ via isometries in the following way: The $O(2)\cong S^1\rtimes\Z_2$-action on $S^1$ is generated by rotations and a reflection. Viewing the circle as parametrized from zero to one, we like to denote the induced maps on 
    \begin{align*}
 \Lambda M\cong\Big\{\gamma:[0,1]\to M\Big|\,\gamma(0)=\gamma(1),\gamma\text{ is absolutely continuous and }\\\int_0^1g\big(\dot{\gamma}(t),\dot{\gamma}(t)\big)\mathrm{d}t<\infty\Big\}
\end{align*}
by
\begin{itemize}
    \item $\chi_s\colon\Lambda M\to\Lambda M,\,\chi_s(\gamma)(t)\coloneqq\gamma(t+s)$,
    \item $\vartheta\colon\Lambda M\to\Lambda M,\,\vartheta(\gamma)(t)\coloneqq\gamma(1-t)$.
\end{itemize}
Note that $\chi_s\circ \vartheta=\vartheta\circ\chi_{-s}$. Of special interest to us is also the involution
\begin{equation*}
    \theta\coloneqq \vartheta\circ\chi_{\frac{1}{2}}=\chi_{\frac{1}{2}}\circ\vartheta
\end{equation*}
mentioned in the introduction. It reverses the orientation and shifts the starting point about $1/2$: $\theta(\gamma)(t)\coloneqq\gamma\big(1-(t+\frac{1}{2})\big)$.

The involutions $\vartheta,\theta$ and the rotations $\chi_s$ all leave the energy $E_g$ of every metric $g$ on $M$ invariant and are isometries of $(\Lambda M,g_1)$ (Lemma 2.2.1 and Theorem 2.2.5 in \cite{MR0478069}).

For rational coefficients, there is a relation between the products $P_\vartheta$ and $P_\theta$. The geometric differences between the actions do not seem to be relevant when $2$ is invertible: The diffeomorphism $\chi_{\frac{1}{4}}\colon\Lambda\to\Lambda$ is equivariant with respect to the $\Z_2$-actions induced by $\vartheta$ and $\theta$:
\begin{equation*}
\vartheta=\vartheta\circ id=\vartheta\circ\chi_{\frac{1}{2}}\circ\chi_{\frac{1}{2}} =\theta\circ\chi_{\frac{1}{2}} =\theta\circ\chi_{\frac{1}{4}}\circ\chi_{\frac{1}{4}}
\end{equation*}
and hence $\chi_{\frac{1}{4}}\circ\vartheta=\vartheta\circ(\chi_{\frac{1}{4}})^{-1}=\theta\circ\chi_{\frac{1}{4}}$. Let $\chi:=\chi_{\frac{1}{4}}/\Z_2:\Lambda/\vartheta\cong\Lambda/\theta$ denote the homeomorphism induced by $\chi_{\frac{1}{4}}$.

\begin{prp} \label{algebra iso}
$\chi_*\colon H_*(\Lambda/\vartheta;\Q) \to H_*(\Lambda/\theta;\Q)$ is an algebra isomorphism between $\big(H_*(\Lambda/\vartheta;\Q),P_\vartheta\big)$ and $\big(H_*(\Lambda/\theta;\Q),P_\theta\big)$.
\end{prp}

\begin{proof}
Let $a,b\in H_*(\Lambda/\vartheta)$. If we consider homology with coefficients in $\Q$, the quotient maps $q_\theta,q_\vartheta$ of the two actions induce surjective maps in homology. Thus there are uniquely determined classes $x,y\in H_*(\Lambda;\Q)^\vartheta=H_*(\Lambda;\Q)^\theta$ with ${q_\vartheta}_*(x)=a,\,{q_\vartheta}_*(y)=b$. It follows that 
\begin{align*}
tr_\theta\big(\chi_*(a)\big)&=tr_\theta\Big(\chi_*\big({q_\vartheta}_*(x)\big)\Big)=tr_\theta\Big({q_\theta}_*\big({\chi_{\frac{1}{4}}}_*(x)\big)\Big)=tr_\theta\big({q_\theta}_*(x)\big),
\end{align*}
since $\chi_{\frac{1}{4}} $ is homotopic to the identity. Using the properties of the transfer homomorphisms we then get
\begin{equation*}
tr_\theta\big({q_\theta}_*(x)\big)=x+\theta_*(x)=2x=x+\vartheta_*(x)=tr_\vartheta\big({q_\vartheta}_*(x)\big)=tr_\vartheta(a),
\end{equation*}
as $\vartheta_*=\theta_*$. Therefore 
 \begin{align*}
  P_\theta\big(\chi_*(a),\chi_*(b)\big)&={q_\theta}_*\Big(tr_\theta\big(\chi_*(a)\big)\ast tr_\theta\big(\chi_*(b)\big)\Big)={q_\theta}_*\big(tr_\vartheta(a)\ast tr_\vartheta(b)\big)\\
  &={q_\theta}_*\circ{\chi_{\frac{1}{4}}}_*\big(tr_\vartheta(a)\ast tr_\vartheta(b)\big)=(\chi_*\circ{q_\vartheta}_*)\big(tr_\vartheta(a)\ast tr_\vartheta(b)\big)\\
  &=\chi_*\big(P_\vartheta(a,b)\big)
 \end{align*}
 and $\chi_*$ is an algebra homomorphism. It is an isomorphism since it inverse is also an algebra homomorphism as can be seen in the same way.
 \end{proof}
 
\begin{cor}
 The homomorphism 
  \begin{equation*}
  (ev_0/\vartheta)_*\colon\big(H_*(\Lambda/\vartheta;\Q),P_\vartheta\big)\to\big(H_*(M;\Q),\bullet)
 \end{equation*}
 and 
  \begin{equation*}
  (ev_0/\theta)_*\circ(\chi_*)^{-1}\colon\big(H_*(\Lambda/\theta;\Q),P_\theta\big)\to\big(H_*(M;\Q),\bullet)
 \end{equation*}
 are algebra homomorphisms up to scaling.
\end{cor}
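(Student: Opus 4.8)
The plan is to obtain both assertions as formal consequences of Proposition~\ref{ev/G homo} and Proposition~\ref{algebra iso}: once the relevant algebra (iso)morphisms are in place, the corollary is merely a composition together with a check of the scaling constants. Throughout, $M$ is the compact, connected, oriented manifold fixed in this section, so that $P_\vartheta$, $P_\theta$ and all the cited statements are available.

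For the first homomorphism I would apply Proposition~\ref{ev/G homo} to the group $G=\langle\vartheta\rangle\cong\Z_2$. All the hypotheses hold: $\vartheta$ fixes every constant loop, hence is the identity on point curves; it preserves base points, because $\vartheta(\gamma)(0)=\gamma(1-0)=\gamma(1)=\gamma(0)$; and $\vartheta_*$ is a Chas--Sullivan algebra isomorphism of $H_*(\Lambda M;\Q)$, since orientation reversal turns a concatenation $\gamma\cdot\delta$ into $\bar\delta\cdot\bar\gamma$, so that $\vartheta_*(a\ast b)$ and $\vartheta_*(a)\ast\vartheta_*(b)$ agree up to the graded-commutativity sign (this is spelled out in the next section). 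Proposition~\ref{ev/G homo} then gives precisely the assertion that $(ev_0/\vartheta)_*\colon\bigl(H_*(\Lambda/\vartheta;\Q),P_\vartheta\bigr)\to\bigl(H_*(M;\Q),\bullet\bigr)$ is an algebra homomorphism up to scaling, with scaling factor $|\vartheta|^2=4$.

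For the second homomorphism one cannot invoke Proposition~\ref{ev/G homo} directly, because $\theta$ does not preserve base points ($\theta(\gamma)(0)=\gamma(\tfrac{1}{2})$) and so $ev_0$ does not descend to $\Lambda M/\theta$ as a continuous map. Instead I would transport the first homomorphism along the isomorphism of Proposition~\ref{algebra iso}: there $\chi=\chi_{1/4}/\Z_2$ is shown to induce an algebra isomorphism $\chi_*\colon\bigl(H_*(\Lambda/\vartheta;\Q),P_\vartheta\bigr)\xrightarrow{\ \cong\ }\bigl(H_*(\Lambda/\theta;\Q),P_\theta\bigr)$, whose inverse is therefore again an algebra isomorphism $(\chi_*)^{-1}\colon\bigl(H_*(\Lambda/\theta;\Q),P_\theta\bigr)\to\bigl(H_*(\Lambda/\vartheta;\Q),P_\vartheta\bigr)$. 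Composing it with the algebra homomorphism $(ev_0/\vartheta)_*$ from the first part shows that the map in question, i.e.\ the composite $H_*(\Lambda/\theta;\Q)\xrightarrow{(\chi_*)^{-1}}H_*(\Lambda/\vartheta;\Q)\xrightarrow{(ev_0/\vartheta)_*}H_*(M;\Q)$, is an algebra homomorphism from $\bigl(H_*(\Lambda/\theta;\Q),P_\theta\bigr)$ to $\bigl(H_*(M;\Q),\bullet\bigr)$ up to scaling, again with factor $4$ (since $(\chi_*)^{-1}$ is an honest algebra isomorphism).

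I do not expect a genuine obstacle: the proof is the composition of maps each of which has already been identified as a morphism, or isomorphism, of algebras up to an explicit scalar. The only point deserving attention is the asymmetry that $\vartheta$ fixes base points while $\theta$ does not, which is exactly the reason one reroutes the second statement through $\chi$ rather than attempting a $\theta$-analogue of Proposition~\ref{ev/G homo}. (The same rational homomorphism can also be described intrinsically on $\Lambda M/\theta$ through the formal map $\tfrac{1}{2}\,(ev_0)_*\circ tr_\theta$ indicated at the end of this section; using naturality of the transfer for the pullback square relating $\chi_{1/4}$ and the two orbit maps, together with $(ev_s)_*=(ev_0)_*$ on $H_*(\Lambda M;\Q)$, one checks this agrees with the composite above.)
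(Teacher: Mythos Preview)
Your approach is correct and is exactly what the paper intends: the corollary is stated without proof and is meant to follow immediately from Proposition~\ref{ev/G homo} applied to $G=\langle\vartheta\rangle$ together with the algebra isomorphism $\chi_*$ of Proposition~\ref{algebra iso}. You have also correctly spotted that the displayed map in the second assertion only type-checks as $(ev_0/\vartheta)_*\circ(\chi_*)^{-1}$, since $(\chi_*)^{-1}$ lands in $H_*(\Lambda/\vartheta;\Q)$; the ``$(ev_0/\theta)_*$'' in the statement is a typo. One small remark: both conclusions are, strictly speaking, algebra homomorphisms \emph{up to the scaling factor $4$} inherited from Proposition~\ref{ev/G homo}; the corollary suppresses this, but your write-up records it, which is the more accurate formulation.
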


The discrete subgroups of $O(2)\cong S^1\rtimes\Z_2$ are isomorphic to the following finite groups:
\begin{itemize}
\item the groups $C_m$ given by the inclusions $\Z_m\hookrightarrow S^1\subset O(2),\,[n]\mapsto e^{2\pi i\frac{n}{m}}$.
\item the dihedral groups $D_m\coloneqq C_m\rtimes\Z_2$.
\end{itemize}

\begin{lem}
For arbitrary coefficients $R$, all $i\in \N$ and all $m\in \N$, we have
\begin{itemize}
    \item $H_i(\Lambda M;R)^{C_m}=H_i(\Lambda M;R)$\,,
    \item $H_i(\Lambda M;R)^{D_m}=H_i(\Lambda M;R)^\vartheta$\,.
\end{itemize}
Furthermore
\begin{equation*}
    H_i(\Lambda M;R)^\vartheta =H_i(\Lambda M;R)^{D_1}=H_i(\Lambda M;R)^{\chi_\frac{1}{4}D_1\chi_\frac{1}{4}^{-1}}=H_i(\Lambda M;R)^\theta
\end{equation*}
\end{lem}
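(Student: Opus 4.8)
The plan is to reduce the entire statement to one observation: for any coefficients $R$, every rotation $\chi_s:\Lambda M\rightarrow\Lambda M$ induces the identity on $H_*(\Lambda M;R)$. Indeed, the reparametrization action of $S^1$ on $\Lambda M$ is continuous (the $\chi_s$ are even isometries of $(\Lambda M,g_1)$, \cite[Lemma 2.2.1]{MR0478069}), so for a fixed $s$ the map $[0,1]\times\Lambda M\rightarrow\Lambda M$, $(u,\gamma)\mapsto\chi_{us}(\gamma)$, is a homotopy from $\chi_0=\mathrm{id}_{\Lambda M}$ to $\chi_s$; homotopy invariance of singular homology then yields $(\chi_s)_*=\mathrm{id}$. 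The first bullet is now immediate: the subgroup $C_m\subset O(2)$ acts on $\Lambda M$ through the single reparametrization $\chi_{1/m}$ and its powers, all of which act trivially on homology, so $H_i(\Lambda M;R)^{C_m}=H_i(\Lambda M;R)$ for every $i$ and $m$.

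For the second bullet, I would write $D_m=C_m\rtimes\langle\vartheta\rangle$ and use the relation $\chi_s\circ\vartheta=\vartheta\circ\chi_{-s}$ to bring each element of $D_m$ into one of two normal forms: a rotation $\chi_{k/m}$, or a reflection $\chi_{k/m}\circ\vartheta$. By the first paragraph the rotations induce $\mathrm{id}$ on $H_*(\Lambda M;R)$, while each reflection induces $(\chi_{k/m})_*\circ\vartheta_*=\vartheta_*$. Hence a class $x\in H_i(\Lambda M;R)$ is fixed by all of $D_m$ if and only if $\vartheta_*(x)=x$, which gives $H_i(\Lambda M;R)^{D_m}=H_i(\Lambda M;R)^\vartheta$.

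The chain of equalities follows in the same spirit. The equality $H_i(\Lambda M;R)^{D_1}=H_i(\Lambda M;R)^\vartheta$ is the case $m=1$ (here $C_1$ is trivial, so $D_1=\langle\vartheta\rangle$). For the conjugate $\chi_{1/2}D_1\chi_{1/2}^{-1}$ one checks directly from $\chi_s\circ\vartheta=\vartheta\circ\chi_{-s}$ that $\chi_{1/2}$ commutes with $\vartheta$, so that conjugate is again $D_1$; alternatively, since conjugation by $\chi_{1/2}$ induces $\mathrm{id}$ on homology, a class is invariant under $\chi_{1/2}D_1\chi_{1/2}^{-1}$ exactly when it is invariant under $D_1$. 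Finally $\theta=\vartheta\circ\chi_{1/2}$ gives $\theta_*=\vartheta_*\circ(\chi_{1/2})_*=\vartheta_*$, whence $H_i(\Lambda M;R)^\theta=H_i(\Lambda M;R)^\vartheta$. I do not expect any real obstacle: the only substantive input is the continuity of the reparametrization action on $\Lambda M$, and the rest is elementary group bookkeeping; the single point requiring some care is the normalization of reflections via $\chi_s\circ\vartheta=\vartheta\circ\chi_{-s}$, which is precisely what forces every reflection to act as $\vartheta_*$ and thereby erases the distinction between $\vartheta$ and $\theta$ once one passes to homology.
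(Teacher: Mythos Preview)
Your proof is correct and follows essentially the same approach as the paper: both reduce to the observation that rotations $\chi_s$ are homotopic to the identity and hence act trivially on homology, so the only nontrivial generator on $H_*$ is $\vartheta_*$. You supply a bit more detail (the explicit homotopy $u\mapsto\chi_{us}$ and the normal-form discussion via $\chi_s\circ\vartheta=\vartheta\circ\chi_{-s}$), but the argument is the same.
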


\begin{proof}
\begin{itemize}
    \item Each element $\chi$ of $C_m$ is a rotation and hence $\chi_*=id_*=id:H_i(\Lambda M)\to H_i(\Lambda M)$.
    \item $D_m$ acts as the subgroup $\{id, \chi_\frac{1}{n},\dots,\chi_\frac{n-1}{n}, \vartheta,\chi_\frac{1}{n}\circ\vartheta,\dots,\chi_\frac{n-1}{n}\circ\vartheta\}$ of the diffeomorphism group of $\Lambda M$. It follows that a homology class $x$ is invariant under the induced action of $D_m$ if and only if it is invariant under $\vartheta_*:H_i(\Lambda M)\rightarrow H_i(\Lambda M)$.
\end{itemize}
The last assertion follows immediately.
\end{proof}
Together with property (\ref{prop 3}) of transfers this yields 

\begin{prp}\label{reversible vs. nonreversible?}
\begin{itemize}
\item For the action of $C_m$ on $\Lambda M$ we have, for all $i\in\Z$ and all $m\in\N$,
\begin{equation*}
    H_i(\Lambda M;\Q)^{C_m}=H_i(\Lambda M;\Q)\cong H_i(\Lambda M/C_m;\Q)
\end{equation*}
as $\Q$-modules via $q_*$ or $tr$.
\item For the action of $D_m$ on $\Lambda M$ we have, for all $i\in\Z$ and all $m\in\N$,
\begin{equation*}
    H_i(\Lambda M;\Q)^{D_m}=H_i(\Lambda M;\Q)^\vartheta\cong H_i(\Lambda M/D_m;\Q)
\end{equation*}
as $\Q$-modules via $q_*$ or $tr$.
\end{itemize}
In particular, this means that taking the quotient with respect to $C_m$ has no effect on rational homology.\qed
\end{prp}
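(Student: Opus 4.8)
The statement is essentially a corollary of the Lemma immediately preceding it, combined with property (\ref{prop 3}) of the transfer, so the plan is short. First I would note that for $R=\Q$ both $|C_m|=m$ and $|D_m|=2m$ are invertible, so property (\ref{prop 3}) applies to the actions of $G=C_m$ and $G=D_m$ on $\Lambda M$; the hypotheses needed for the transfer to be defined in this setting (that $\Lambda M$ and $\Lambda M/G$ are paracompact Hausdorff and have the homotopy type of a CW-complex) were already verified in Section \ref{transfer}. Hence the quotient map induces isomorphisms
\begin{equation*}
 q_*\colon H_i(\Lambda M;\Q)^{C_m}\xrightarrow{\cong}H_i(\Lambda M/C_m;\Q),\qquad q_*\colon H_i(\Lambda M;\Q)^{D_m}\xrightarrow{\cong}H_i(\Lambda M/D_m;\Q)
\end{equation*}
for all $i\in\Z$ and all $m\in\N$.

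Next I would substitute the descriptions of the invariant subspaces coming from the preceding Lemma: $H_i(\Lambda M;\Q)^{C_m}=H_i(\Lambda M;\Q)$, because every element of $C_m$ acts by a rotation $\chi_s$, which is homotopic to the identity and therefore acts trivially on homology; and $H_i(\Lambda M;\Q)^{D_m}=H_i(\Lambda M;\Q)^\vartheta$, because the elements of $D_m$ outside the rotation subgroup are the compositions $\chi_s\circ\vartheta$, each of which induces $\vartheta_*$ on homology, so that $D_m$-invariance is equivalent to $\vartheta$-invariance. Feeding these identifications into the two isomorphisms above yields exactly $H_i(\Lambda M;\Q)\cong H_i(\Lambda M/C_m;\Q)$ and $H_i(\Lambda M;\Q)^\vartheta\cong H_i(\Lambda M/D_m;\Q)$, and the closing remark that quotienting by $C_m$ has "no effect on rational homology" is just the first of these.

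Finally, to justify the phrase "via $q_*$ or $tr$", I would recall the computation carried out in Section \ref{transfer}: when $|G|$ is invertible in $R$ the map $s:=\tfrac{1}{|G|}\,tr$ satisfies $q_*\circ s=\mathrm{id}$ by property (\ref{prop 1}), and, restricted to the $G$-invariants, $s\circ q_*=\mathrm{id}$ by property (\ref{prop 2}) (using $tr\circ q_*(x)=\sum_{g\in G}g_*(x)=|G|x$ for invariant $x$), while $\mathrm{im}(tr)=H_i(X;R)^G$. Hence $tr$ itself restricts to an isomorphism $H_i(\Lambda M/G;\Q)\xrightarrow{\cong}H_i(\Lambda M;\Q)^G$ for $G=C_m$ and $G=D_m$, inverse to $q_*$ up to the scalar $|G|$. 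The only point that is not purely formal here is the construction of the transfer in this infinite-dimensional, non-free situation, but that has already been settled in Section \ref{transfer}; granting it, there is no genuine obstacle, and the proposition follows by assembling these pieces.
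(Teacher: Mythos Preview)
Your proposal is correct and follows exactly the same approach as the paper: the paper's own proof is simply the one-line remark that the proposition follows from the preceding Lemma together with property~(\ref{prop 3}) of the transfer, and you have spelled out precisely these two ingredients (invertibility of $|G|$ in $\Q$, identification of the invariant subspaces, and the $q_*$/$tr$ isomorphisms). Your additional justification of the phrase ``via $q_*$ or $tr$'' is a welcome elaboration but adds nothing new to the argument.
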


Due to its "geometric definition" the Chas-Sullivan product behaves well with respect to the $O(2)$-action:

\begin{prp}\label{inv vs mult CS}
Let $M$ be an $n$-dimensional connected, compact, smooth manifold that is $R$-orientable. Then, the homomorphism $\vartheta_*=\theta_*:\big(H_*(\Lambda M;R),\ast)\to \big(H_*(\Lambda M:R),\ast)$ is an algebra endomorphism: for all $a\in H_i(\Lambda;R),b\in H_j(\Lambda;R)$ we have
  \begin{equation}\label{alg hom}
 \vartheta_*(a)\ast\vartheta_*(b)=\vartheta_*(a\ast b).	
 \end{equation}
 Hence, our $O(2)$-action induces an action by algebra isomorphims on $(H_*(\Lambda M;R),\ast)$.
\end{prp}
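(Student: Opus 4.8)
The plan is to push the orientation reversal $\vartheta$ through the four stages of the Goresky--Hingston definition of $\ast$ (the sign-corrected cross product, the Gysin map ${i_\mathcal{F}}_!$, the Thom isomorphism, and the concatenation $\phi_\ast$) and to check that the only discrepancy it introduces is an interchange of the two factors, which graded commutativity then repairs. First I would record the two identities that carry the argument. Writing $\tau(\alpha,\beta):=(\beta,\alpha)$ for the factor exchange, which preserves $\mathcal{F}$ and commutes with $\vartheta\times\vartheta$, a direct inspection of the formula for $\phi$ gives
\[
\vartheta\circ\phi=\phi\circ\tau\circ(\vartheta\times\vartheta)\colon\mathcal{F}\to\Lambda,
\]
since reversing a concatenation $\gamma\cdot\delta$ yields $\bar\delta\cdot\bar\gamma$. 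Secondly, $ev_0\times ev_0\colon\Lambda^2\to M^2$ is invariant under $\vartheta\times\vartheta$ (base points of loops are fixed by $\vartheta$), so $\vartheta\times\vartheta$ preserves $\mathcal{F}=(ev_0\times ev_0)^{-1}\big(\Delta(M)\big)$, and under $N_\mathcal{F}\cong ev_0^\ast(N_M)$ the induced bundle automorphism is $(\gamma,\delta,w)\mapsto(\vartheta\gamma,\vartheta\delta,w)$, i.e.\ the identity on fibres; in particular it fixes the Thom class $\tau_\mathcal{F}$. By contrast $\tau$ preserves $\mathcal{F}$ as well, but the differential of the swap on $M^2$ acts on $N_M\cong\{(v,-v)\}$ by $v\mapsto -v$, so $\tau^\ast\tau_\mathcal{F}=(-1)^n\tau_\mathcal{F}$.

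With these in hand I would assemble the computation. Naturality of the cross product gives $(\vartheta\times\vartheta)_\ast(a\times b)=\vartheta_\ast a\times\vartheta_\ast b$, which does not alter the sign correction $(-1)^{n(n-j)}$ because $|\vartheta_\ast b|=|b|$. Naturality of the Gysin map, which does not depend on the chosen tubular neighbourhood, so that there is no need to arrange that $\vartheta\times\vartheta$ or $\tau$ preserve a fixed tube, then yields
\[
(\vartheta\times\vartheta)_\ast\circ{i_\mathcal{F}}_!={i_\mathcal{F}}_!\circ(\vartheta\times\vartheta)_\ast,\qquad \tau_\ast\circ{i_\mathcal{F}}_!=(-1)^n\,{i_\mathcal{F}}_!\circ\tau_\ast,
\]
the first square clean because $\vartheta\times\vartheta$ fixes $\tau_\mathcal{F}$ and the second carrying a $(-1)^n$ by the normal-bundle computation above. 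Finally $\tau_\ast(\vartheta_\ast a\times\vartheta_\ast b)=(-1)^{ij}\vartheta_\ast b\times\vartheta_\ast a$ by the Koszul rule. Feeding these identities through the definition of $\ast$ turns $\vartheta_\ast(a\ast b)$ into a signed multiple of $\vartheta_\ast b\ast\vartheta_\ast a$, and one application of graded commutativity $\vartheta_\ast b\ast\vartheta_\ast a=(-1)^{(i-n)(j-n)}\vartheta_\ast a\ast\vartheta_\ast b$ brings it to the asserted form.

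The only substantive point is the sign bookkeeping at the end. Collecting the contributions $(-1)^n$ (swap on the normal bundle of the diagonal), $(-1)^{n(n-j)}$ and $(-1)^{n(n-i)}$ (the Goresky--Hingston sign correction, applied once to $a\ast b$ and once when rewriting $\phi_\ast{i_\mathcal{F}}_!(\vartheta_\ast b\times\vartheta_\ast a)$ in terms of $\vartheta_\ast b\ast\vartheta_\ast a$), $(-1)^{ij}$ (Koszul) and $(-1)^{(i-n)(j-n)}$ (graded commutativity), the total exponent reduces modulo $2$ to $n+n^2=n(n+1)$, which is even; hence all signs cancel and $\vartheta_\ast(a\ast b)=\vartheta_\ast a\ast\vartheta_\ast b$. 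Since $\theta=\chi_{1/2}\circ\vartheta$ and the rotation $\chi_{1/2}$ is homotopic to $\mathrm{id}_\Lambda$, one has $\theta_\ast=\vartheta_\ast$, so $\theta_\ast$ is an algebra endomorphism too; and as every element of $O(2)$ acts on $\Lambda M$ as some $\chi_s$ or $\chi_s\circ\vartheta$ and therefore induces on $H_\ast(\Lambda M;R)$ either the identity or $\vartheta_\ast$, and since $\vartheta_\ast^2=\mathrm{id}$, the whole $O(2)$-action is by algebra automorphisms of $\big(H_\ast(\Lambda M;R),\ast\big)$. I expect the main obstacle to be nothing deep but rather keeping the normal-bundle sign and the two occurrences of the sign correction tracked consistently; once that is done the cancellation is forced by $n(n+1)$ being even.
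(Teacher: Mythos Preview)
Your argument is correct, including the sign bookkeeping, but it takes a genuinely different route from the paper's. The paper observes the cleaner identity
\[
\phi\circ(\vartheta\times\vartheta)=\theta\circ\phi,
\]
i.e.\ $\bar\gamma\cdot\bar\delta=\theta(\gamma\cdot\delta)$ (not $\vartheta(\gamma\cdot\delta)$), which involves \emph{no swap} of the factors. Combined with the fact that $(\vartheta\times\vartheta)$ fixes $\tau_\mathcal{F}$, this yields directly $\vartheta_\ast(a)\ast\vartheta_\ast(b)=\theta_\ast(a\ast b)$, and then $\theta_\ast=\vartheta_\ast$ finishes the proof with no sign juggling and no appeal to graded commutativity. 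Your version instead uses $\vartheta\circ\phi=\phi\circ\tau\circ(\vartheta\times\vartheta)$, so you have to carry the swap $\tau$ through the Gysin map (picking up $(-1)^n$), through the cross product (picking up $(-1)^{ij}$), and finally invoke graded commutativity of $\ast$ to undo it; the identity $n(n+1)\equiv 0\pmod 2$ then makes everything collapse. Both arguments are valid; the paper's buys a sign-free, one-line conclusion at the cost of spotting the $\theta$-equivariance of $\phi$, while yours is the more pedestrian ``push $\vartheta$ through the definition'' route that trades cleverness for a careful tally. One further difference: the paper verifies compatibility of $\vartheta\times\vartheta$ with the Thom isomorphism via the explicit tubular neighbourhood maps $t_\mathcal{F}$, $s_\mathcal{F}$ constructed earlier, whereas you appeal to abstract naturality of the Gysin map together with the action on $\tau_\mathcal{F}$; either justification is adequate here.
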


\begin{proof}
Since our definition of the Chas-Sullivan involves capping with a Thom class $\tau_\mathcal{F}$ of $\mathcal{F}\subset\Lambda^2\coloneqq \Lambda\times\Lambda$, we first show that $\tau_\mathcal{F}$ is invariant under $\vartheta\times\vartheta$: Let $N$ denote the normal bundle of the inclusion $M\cong\Delta(M)\subset M\times M=:M^2$.
Recall from Lemma \ref{nb is pullback} that the normal bundle $N_\mathcal{F}$ of the figure-eight space $\mathcal{F}\subset \Lambda^2$ is isomorphic to ${ev_0}^*(N)$, where $ev_0\coloneqq\mathcal{F}\to M,\,\left(\gamma,\delta\right)\to\gamma(0)$. $ev_0$ is just the restriction of $ev_0\times ev_0\colon\Lambda ^2\to M^2$ to $\mathcal{F}\subset \Lambda^2$ and $M\cong\Delta(M)\subset M^2$. A Thom class $\tau_{\mathcal{F}}\in H^n(N_{\mathcal{F}},N_{\mathcal{F}}-\mathcal{F})$ of $N_\mathcal{F}$ is then a pullback of a Thom class $\tau\in H^n(N,N-M;R)$ of $N$ via the differential of $ev_0\times ev_0$ restricted to normal directions. Since the diagram
\begin{equation*}
\xymatrixrowsep{1pc}
    \xymatrix{
\Lambda^2\ar[rd]_-{ev_0\times ev_0}\ar[rr]^-{\vartheta\times\vartheta}&&\Lambda^2\ar[ld]^-{ev_0\times ev_0}\\
&M^2&
 }
\end{equation*}
commutes, it follows that $(\vartheta\times\vartheta)^*(\tau_\mathcal{F})=\tau_\mathcal{F}$. Furthermore, there is a tubular neighbourhood map that commutes with $\vartheta\times\vartheta$: Recall the map $h$ that pushes points from the section on tubuluar neighbourhoods (Section \ref{explicit tubes}). As
\begin{align*}
\vartheta\big(\lambda(\delta,v)\big)(t)&=h\big(\delta(0),\exp(\delta(0),v)\big)\big(\delta(1-t)\big)=h\big(\delta(0),\exp(\delta(0),v)\big)\big(\vartheta(\delta)(t)\big)\\
&=\lambda\big(\vartheta(\delta),v\big)(t),
\end{align*} 
the explicit tubular neighbourhood map $t=t_\mathcal{F}:{ev_0}^*(N)\rightarrow \Lambda^2$ defined in Section \ref{explicit tubes} satisfies  $(\vartheta\times\vartheta)\circ t=t\circ\left((\vartheta\times\vartheta)\times id_N\right)$, where $(\vartheta\times\vartheta)\times id_N$ corresponds to the differential of $\vartheta\times\vartheta$ in ${ev_0}^*(N)\cong N_\mathcal{F}$. Thus, mildly abusing notation by also using the symbol $\tau_\mathcal{F}$ for the Thom class understood as an element of $H^n(\Lambda^2,\Lambda^2-\mathcal{F})$, we hence have the following commutative diagram ($R$-coefficients), where the horizontal compositions precomposed with the cross product are the Chas-Sullivan product:
\begin{equation*}
 \xymatrixcolsep{6pc}\xymatrix{
   H_{i+j}(\Lambda^2,\Lambda^2-\mathcal{F})\ar[r]_-{\cap(\vartheta\times\vartheta)^*(\tau_\mathcal{F})=\tau_\mathcal{F}}\ar[d]_-{(\vartheta\times\vartheta)_*}&H_{i+j-n}(\mathcal{F})\ar[d]_-{(\vartheta\times\vartheta)_*}\ar[r]_-{\phi_*}&H_{i+j-n}(\Lambda)\ar[d]_-{\theta_*}\\ 
   H_{i+j}(\Lambda^2,\Lambda^2-\mathcal{F})\ar[r]_-{\cap\tau_\mathcal{F}}&H_{i+j-n}(\mathcal{F})\ar[r]_-{\phi_*}&H_{i+j-n}(\Lambda).
     }  
\end{equation*}
This diagram commutes since the concatenation $\phi=\phi_{\frac{1}{2}}$ at time $\frac{1}{2}$ satisfies $\phi\circ(\vartheta\times\vartheta)=\theta\circ\phi$. Recalling that for a loop $\gamma$ we have $\bar{\gamma}\coloneqq\vartheta(\gamma)$ and that $\theta=\chi_\frac{1}{2}\circ\vartheta$, that is $\theta\left(\gamma\cdot\delta\right)=\bar{\gamma}\cdot\bar{\delta}$ for loops $\gamma,\delta$. The diagram thus implies that
  \begin{equation*}
 \vartheta_*(a)\ast\vartheta_*(b)=\theta_*(a\ast b)
 \end{equation*}
holds for $a\in H_i(\Lambda;R),b\in H_j(\Lambda;R)$. Via $\vartheta_*=\theta_*$ this yields equation \eqref{alg hom}.
As any element of $O(2)$ is either a rotation $\chi$ or of the form $\chi\circ\vartheta$ the second assertion of the proposition is immediate.
\end{proof}

This together with Theorem \ref{alg iso} proves

\begin{thm}\label{O(2) subgroup iso}
Let $M$ be an $n$-dimensional connected, compact, orientable smooth manifold and $G$ a finite subgroup of $O(2)$ acting on $\Lambda M$ by linear reparametrization as described above. Then $\big(H_*(\Lambda M/G;\Q),P_G\big)$ is up to scaling isomorphic to the restriction of the Chas-Sullivan product to the $G$-invariant classes. More precisely:
\begin{itemize}
    \item $\big(H_*(\Lambda M/C_m;\Q),P_{C_m}\big)$ is up to scaling isomorphic to the entire Chas-Sullivan algebra $\big(H_*(\Lambda M;\Q),\ast\big)$.
    \item For groups $G$ conjugate to some $D_m$, $\big(H_*(\Lambda M/G;\Q),P_G\big)$ is up to scaling isomorphic to the restricted Chas-Sullivan algebra $\big(H_*(\Lambda M;\Q)^\vartheta,\ast\big)$.
\end{itemize}
In particular, $\big(H_*(\Lambda M/G;\Q),P_G\big)$ is an associative, graded-commutative and unital algebra.
\end{thm}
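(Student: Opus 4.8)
The plan is to derive the whole statement from Theorem~\ref{alg iso} by verifying its two hypotheses with $R=\Q$. First, $|G|$ is invertible in $\Q$ because $G$ is finite. Second, by Proposition~\ref{inv vs mult CS} the entire $O(2)$-action on $\Lambda M$ is by algebra automorphisms of $(H_*(\Lambda M;\Q),\ast)$, so in particular each $g_*$, $g\in G$, is a Chas--Sullivan algebra isomorphism. Theorem~\ref{alg iso} then gives at once that
\[
q_*\colon\big(H_*(\Lambda M;\Q)^G,\ast\big)\xrightarrow{\ \cong\ }\big(H_*(\Lambda M/G;\Q),P_G\big)
\]
and $tr$ in the opposite direction are algebra isomorphisms up to the stated scaling factors, and that $\big(H_*(\Lambda M/G;\Q),P_G\big)$ is associative and graded-commutative.

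It then remains only to identify the subalgebra $H_*(\Lambda M;\Q)^G$ in the two cases. For $G=C_m$ every element acts by a rotation, hence trivially on homology, so by the Lemma computing these invariants (the one preceding Proposition~\ref{reversible vs. nonreversible?}) one has $H_*(\Lambda M;\Q)^{C_m}=H_*(\Lambda M;\Q)$, which is the first bullet. For $G=D_m$ the same Lemma gives $H_*(\Lambda M;\Q)^{D_m}=H_*(\Lambda M;\Q)^\vartheta$. If $G$ is only conjugate to $D_m$, I would note that a conjugating element may be taken to be a rotation $\chi_t$ (since conjugation by $\vartheta$ already preserves $D_m$), that $\chi_t$ is homotopic to $\mathrm{id}_{\Lambda M}$ and hence $(\chi_t)_*=\mathrm{id}$ on rational homology, so that writing each $g\in G$ as $g=\chi_t g'\chi_t^{-1}$ with $g'\in D_m$ yields $g_*=g'_*$ as endomorphisms of $H_*(\Lambda M;\Q)$; therefore $H_*(\Lambda M;\Q)^G=H_*(\Lambda M;\Q)^{D_m}=H_*(\Lambda M;\Q)^\vartheta$, giving the second bullet for all such $G$. (Alternatively one transports the product through the induced homeomorphism $\Lambda M/D_m\cong\Lambda M/G$ exactly as in the proof of Proposition~\ref{algebra iso}.)

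For unitality, observe that the inclusion $c\colon M\hookrightarrow\Lambda M$ of constant loops is equivariant for the trivial $G$-action on $M$, since $\chi_s$ and $\vartheta$ fix every constant loop; hence $g_*(E)=E$ for all $g\in G$, so the Chas--Sullivan unit $E$ lies in $H_*(\Lambda M;\Q)^G$, and the unitality proposition above (applied with $R=\Q$) shows that $e:=\tfrac{1}{|G|^2}q_*(E)$ is a two-sided unit for $P_G$. This completes the argument.

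Since this theorem merely assembles results already established, there is no serious obstacle; the one point needing a minor argument is the case of $G$ only conjugate to $D_m$, where one must check that conjugation by a rotation does not alter the $G$-invariant part of $H_*(\Lambda M;\Q)$ — which holds precisely because such a conjugation is null-homotopic. The substantive inputs, namely that $\vartheta_*=\theta_*$ respects the Chas--Sullivan product (Proposition~\ref{inv vs mult CS}) and that the transfer and quotient maps intertwine $\ast$ and $P_G$ up to scaling (Theorem~\ref{alg iso}), are already available.
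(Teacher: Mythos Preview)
Your proposal is correct and follows essentially the same route as the paper: the paper's own proof is the single sentence ``This together with Theorem~\ref{alg iso} proves'' (where ``this'' is Proposition~\ref{inv vs mult CS}), and you have simply unpacked that sentence, invoking in addition the Lemma before Proposition~\ref{reversible vs. nonreversible?} to identify the invariants and the unitality proposition for the unit. Your extra care with the conjugate-of-$D_m$ case (reducing to $D_m$ via a homotopically trivial rotation) is a legitimate elaboration of what the paper leaves implicit in that Lemma.
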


For groups $G$ conjugate to some $D_m$, we are going to compute the algebras $\big(H_*(\Lambda S^n/G;\Q),P_G\big)$ more explicitly (see Theorem \ref{main thm}) in the next section.

\section{The transfer product $P_\vartheta$ on $H_*(\Lambda S^n/\vartheta;\Q)$}

In this section we set $\Lambda\coloneqq\Lambda S^n$.

We now compute the transfer product $P_\vartheta$ associated to orientation reversal of loops on spheres with rational coefficients.

We first compute the rational homology of $\Lambda/\vartheta$ as follows:

\begin{itemize}
 \item We know that the surjective homomorphism $q_*$ induced by $q\colon\Lambda\to \Lambda/\vartheta$ maps the subspace of  $H_*(\Lambda;\Q)$ consisting of those classes which are fixed under $\vartheta_*$ isomorphically onto $H_*(\Lambda/\vartheta;\Q)$, i.e, for $0\neq x\in H_i(\Lambda;\Q)$, $0\neq q_*(x)\in H_i(\Lambda/\vartheta;\Q)$ if and only if $\vartheta_*(x)=x$.
 
\item We are going to see how $\vartheta_*$ acts on the generators of the rational loop homology algebra of spheres. The universal coefficient theorem for homology shows that $H_i(\Lambda S^n;\Q)\cong H_i(\Lambda S^n;\Z)\otimes\Q$ is either zero or isomorphic to $\Q$ since $H_i(\Lambda S^n;\Z)$ is either $0$ or $\Z$ or $\Z_2$. The Chas-Sullivan product is defined in exact the same way as for $\Z$-coefficients and all classes of $H_*(\Lambda S^n;\Q)$ can be written as a Chas-Sullivan product of the rational classes corresponding to the generators introduced in Section \ref{CS spheres} (compare \cite[Lemma 5.4]{hingston2013}).

\item Since we also know that $\vartheta_*(x\ast y)=\vartheta_*(x)\ast \vartheta_*(y)$ for all $x,y\in H_*(\Lambda S^n;\Q)$ (Proposition \ref{inv vs mult CS}), it suffices to know how $\vartheta_*$ acts on the generators to compute $H_*(\Lambda/\vartheta;\Q)$.\\
\end{itemize}

We compute the above relations using the Pontrjagin product on the based loop space: Let $p\in S^n$ and consider the space $\Omega_p S^n=\Omega S^n$ of loops based at $p$. Since $\Omega S^n$ is an H-space, even an H-group, its singular homology carries a Pontrjagin product. Let $\star$ denote this Pontrjagin product. It is well known that for $n\geq2$ (with $\Z$-coefficients) the Pontrjagin ring of spheres is a polynomial ring: $(H_*(\Omega S^n;\Z),\star)\cong\Z[x]$ with $x\in H_{n-1}(\Omega S^n;\Z)$ (see e.g. \cite[Section 4J]{HatcherAT}).

\begin{lem} Let $\vartheta$ be the orientation reversal of loops on the based loop space $\Omega S^n$ of $S^n$ with $n\geq2$. Then, for the generator $x\in H_{n-1}(\Omega S^n;\Z)$ of the Pontrjagin ring $(H_*(\Omega S^n;\Z),\star)\cong\Z[x]$, we have
\begin{equation*}
 \vartheta_*(x)=-x\,,
\end{equation*}
as $\vartheta$ induces $-id$ on $H_{n-1}(\Omega S^n;\Z)$.
\end{lem}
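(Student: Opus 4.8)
The plan is to reduce the statement to a sign computation in $\pi_n(S^n)$. Since $S^n$ is $(n-1)$-connected, the based loop space $\Omega S^n$ is $(n-2)$-connected, so the Hurewicz homomorphism
\[
h\colon \pi_{n-1}(\Omega S^n)\longrightarrow H_{n-1}(\Omega S^n;\Z)
\]
is an isomorphism of infinite cyclic groups; for $n=2$ this is the abelianization map of $\pi_1(\Omega S^2)\cong\pi_2(S^2)\cong\Z$, which is already abelian, so it is still an isomorphism. The map $\vartheta\colon\Omega S^n\to\Omega S^n$ is based (it fixes the constant loop at the basepoint), and $h$ is natural with respect to based maps, so $h\circ\vartheta_*=\vartheta_*\circ h$. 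Hence it suffices to prove that $\vartheta_*=-\op{id}$ on $\pi_{n-1}(\Omega S^n)$: writing $x=h(\alpha)$ for a generator $\alpha$, one then gets $\vartheta_*(x)=h\big(\vartheta_*(\alpha)\big)=h(-\alpha)=-x$.

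Next I would use the loop-space adjunction isomorphism $\Phi\colon\pi_{n-1}(\Omega S^n)\xrightarrow{\ \cong\ }\pi_n(S^n)$ sending $[f]$, for $f\colon S^{n-1}\to\Omega S^n$, to $[\hat f]$, where $\hat f\colon \Sigma S^{n-1}=S^n\to S^n$ is given by $\hat f([v,t]):=f(v)(t)$ (here $[v,t]$ denotes the class of $(v,t)\in S^{n-1}\times[0,1]$ in the reduced suspension). The point is that $\vartheta$ is a \emph{reparametrization} of loops, not $\Omega g$ for any $g\colon S^n\to S^n$, so one cannot appeal to naturality of the adjunction in the target variable; instead one computes adjoints directly. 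Since $\vartheta(f(v))(t)=f(v)(1-t)$, we get
\[
\widehat{\vartheta\circ f}\,([v,t])=f(v)(1-t)=\hat f([v,1-t])=(\hat f\circ\rho)([v,t]),
\]
where $\rho\colon S^n\to S^n$ is the reflection $\rho([v,t]):=[v,1-t]$ in the suspension coordinate. Thus $\Phi\circ\vartheta_*=\rho^{*}\circ\Phi$, with $\rho^{*}$ precomposition by $\rho$ on $\pi_n(S^n)$. Under the standard embedding $S^n\subset\R^{n+1}$ with the suspension coordinate as the last coordinate, $\rho$ is the restriction of a coordinate-hyperplane reflection of $\R^{n+1}$, hence $\deg\rho=-1$; so on $\pi_n(S^n)\cong\Z$ (degree identification) precomposition with $\rho$ is multiplication by $-1$. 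Therefore $\vartheta_*=-\op{id}$ on $\pi_{n-1}(\Omega S^n)$, and by the first paragraph $\vartheta_*(x)=-x$.

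I do not expect a genuine obstacle; the proof is bookkeeping, and the only points requiring care are (i) carrying out the explicit adjoint computation rather than invoking naturality (precisely because $\vartheta$ is not $\Omega$ of a self-map of $S^n$), and (ii) recording the elementary facts that $\rho$ is basepoint preserving and has degree $-1$. As a sanity check one can avoid homotopy groups altogether: the canonical inclusion $E\colon S^{n-1}\hookrightarrow\Omega S^n$, $E(v)(t):=[v,t]$, induces an isomorphism on $H_{n-1}$, so $x=E_*[S^{n-1}]$, and literally $\vartheta\circ E=(\Omega\rho)\circ E$; since $(\Omega\rho)_*=-\op{id}$ on $H_{n-1}(\Omega S^n)$ for the same degree reason, $\vartheta_*(x)=(\Omega\rho)_*x=-x$.
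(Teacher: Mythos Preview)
Your proof is correct and follows the same overall architecture as the paper's: reduce via the Hurewicz isomorphism to showing $\vartheta_*=-\mathrm{id}$ on $\pi_{n-1}(\Omega S^n)$, using the loop--suspension adjunction with $\pi_n(S^n)$. The one substantive difference is in how that last step is carried out. The paper observes that $\vartheta$ is the homotopy inversion of the $H$-group $\Omega S^n$, so $\vartheta_*$ is automatically group inversion on every $[X,\Omega S^n]_0$; you instead transport $\vartheta$ across the adjunction and identify it concretely as precomposition with the suspension reflection $\rho$, then invoke $\deg\rho=-1$. Your route is slightly more hands-on and has the virtue of making the geometry explicit (and of not relying on the compatibility of the $H$-group and cogroup structures on $[S^{n-1},\Omega S^n]_0$), while the paper's is a one-line conceptual shortcut once one remembers that loop reversal is the $H$-inverse. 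Your ``sanity check'' via $E\colon S^{n-1}\hookrightarrow\Omega S^n$ is essentially the paper's construction of the generator $x=\widehat{\mathrm{id}_{S^n}}_*[S^{n-1}]$; note however that the justification of $(\Omega\rho)_*=-\mathrm{id}$ on $H_{n-1}$ still passes through the same degree/homotopy reasoning, so it does not really bypass homotopy groups.
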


\begin{proof}
Let $\phi$ denote the concatenation on $\Omega S^n$, i.e. the H-group multiplication of $\Omega S^n$. Consider the diagonal embedding $\Delta\colon \Omega S^n\to\Omega S^n\times \Omega S^n,\,f\mapsto (f,f)$. Since $\vartheta\colon\Omega S^n\to\Omega S^n$ is the homotopy inversion of the H-group $\Omega S^n$ it follows that the composition $\phi\circ(id\times\vartheta)\circ\Delta\colon \Omega S^n\to\Omega S^n$ is homotopic to the constant map and therefore induces the zero map on $H_{n-1}(\Omega S^n;\Z)$, as $n-1\geq1$.

Let $y$ be any element of $H_{n-1}(\Omega S^n;\Z)$. For the remainder of the proof we abbreviate $\Omega S^n$ with $\Omega$ and $\Omega\times\Omega$ with $\Omega^2$ and omit the coefficients $\Z$ from the notation. By the Künneth theorem $\Delta_*(y)=a(x\times1)+b(1\times x)\in H_{n-1}(\Omega^2)$ for some integers $a,b$, since $H_{n-1}(\Omega^2)$ is isomorphic to $\bigoplus_{i+j=n-1}H_i(\Omega)\otimes H_j(\Omega )=H_{n-1}(\Omega)\otimes H_0(\Omega)\bigoplus H_0(\Omega)\otimes H_{n-1}(\Omega)$ via the cross product $\times$. Here $1$ denotes a generator of $H_0(\Omega)\cong \Z$, the unit for the Pontrjagin product. Consider the swap map $\mathrm{T}\colon\Omega^2\to\Omega^2, (\alpha,\beta)\mapsto (\beta,\alpha)$. We have $\mathrm{T}_*(x\times1)=1\times x$ and $a(x\times1)+b(1\times x)=\Delta_*(y)=\mathrm{T}_*\circ\Delta_*(y)= b(x\times1)+a(1\times x)$, which shows that $a=b$. We thus have $\Delta_*(y)=a(x\times1+1\times x)=a\left(\times(x\otimes 1+1\otimes x)\right) $. The commutative diagram
\begin{equation*}
\xymatrixrowsep{1pc}
\xymatrixcolsep{1pc}
    \xymatrix{
    H_{n-1}(\Omega)\ar@/^2pc/[rrr]^-0\ar[r]^-{\Delta_*}&H_{n-1}(\Omega^2)\ar[r]^-{(id\times\vartheta)_*}&H_{n-1}(\Omega^2)\ar[r]^-{\phi_*}& H_{n-1}(\Omega )\\
    &\bigoplus_{i+j=n-1}H_i(\Omega)\otimes H_j(\Omega)\ar[u]^-\times_\cong\ar[r]_-{id\times\vartheta_*}&\bigoplus_{i+j=n-1}H_i(\Omega)\otimes H_j(\Omega)\ar[u]_-\times^\cong\ar[ru]_-\star&\,
    }
\end{equation*}
shows that
\begin{equation*}
    0=a\left(x\star1+1\star \vartheta_*(x)\right)=a\left(x+\vartheta_*(x)\right)
\end{equation*}
holds since $\vartheta_*(1)=1$ and hence, $\vartheta_*(x)=-x$.
\end{proof}

Lets us see how $\vartheta_*$ comports with the Pontrjagin multiplication of the generators of $H_*(\Omega S^n)$:

\begin{lem}\label{pontrjagin generator}
Let $\vartheta$ be the orientation reversal of loops on the based loop space $\Omega S^n$, $n>2$. Then in the Pontrjagin ring $(H_*(\Omega S^n;\Z),\star)\cong\Z[x]$, where $|x|=n-1$, we have
\begin{equation*}
  \vartheta_*(x^{\star k})=\begin{cases}\big(\vartheta_*(x)\big)^{\star k}=(-1)^k x^{\star k}, & \text{if }n\text{ is odd or }(k-1)k\equiv0\text{ mod }4\\ -\big(\vartheta_*(x)\big)^{\star k}=(-1)^{k+1} x^{\star k}, & \text{if }n\text{ is even and }(k-1)k\not\equiv0\text{ mod }4\end{cases}.
\end{equation*}
\end{lem}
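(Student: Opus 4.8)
The plan is to deduce both statements from the single observation that, on $\Omega S^n$, orientation reversal reverses the order of concatenation on the nose: unwinding the piecewise formula for the half-interval concatenation $\phi_{1/2}$ gives $\overline{\gamma\cdot\delta}=\bar\delta\cdot\bar\gamma$ for all $\gamma,\delta\in\Omega S^n$. Writing $\mu$ for concatenation and $T$ for the coordinate interchange of $\Omega S^n\times\Omega S^n$, this says $\vartheta\circ\mu=\mu\circ T\circ(\vartheta\times\vartheta)$ (equivalently: $\vartheta$ is the homotopy inversion of the H-group $\Omega S^n$ recalled above, and inversion is always an anti-homomorphism). Passing to singular homology and inserting the Eilenberg--Zilber sign $T_*(a\otimes b)=(-1)^{|a||b|}b\otimes a$ gives, since $\vartheta_*$ is degree-preserving,
\[
\vartheta_*(a\star b)=(-1)^{|a||b|}\,\vartheta_*(b)\star\vartheta_*(a).
\]
Now I would invoke the feature special to spheres: the Pontrjagin ring $\bigl(H_*(\Omega S^n;\Z),\star\bigr)\cong\Z[x]$ is \emph{genuinely commutative} — for $n$ even this is strictly stronger than graded-commutativity, since $|x|=n-1$ is odd. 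Hence $\vartheta_*(b)\star\vartheta_*(a)=\vartheta_*(a)\star\vartheta_*(b)$, and combining this with the display yields $(-1)^{|a||b|}\vartheta_*(a)\star\vartheta_*(b)=\vartheta_*(a\star b)$, which is the first claim.

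For $\vartheta_*(x^{\star k})$ I would induct on $k$, with base case $\vartheta_*(x)=-x$ from the preceding lemma. Applying the identity just proved with $a=x$ and $b=x^{\star(k-1)}$, and using $|x|\,|x^{\star(k-1)}|=(k-1)(n-1)^2\equiv(k-1)(n-1)\pmod 2$, one obtains the recursion $\vartheta_*(x^{\star k})=(-1)^{(k-1)(n-1)+1}\,\vartheta_*(x^{\star(k-1)})$. Unrolling it, with $\sum_{j=0}^{k-1}j=\binom{k}{2}$, gives the closed form $\vartheta_*(x^{\star k})=(-1)^{(n-1)\binom{k}{2}+k}\,x^{\star k}$. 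It then remains to read off the cases. If $n$ is odd, $(n-1)\binom{k}{2}$ is even, so the sign is $(-1)^k$ and $\vartheta_*(x^{\star k})=(-1)^kx^{\star k}=(\vartheta_*x)^{\star k}$. If $n$ is even, the sign is $(-1)^{\binom{k}{2}+k}$; since $\binom{k}{2}=k(k-1)/2$ is even exactly when $(k-1)k\equiv 0\pmod 4$, this is $(-1)^kx^{\star k}$ in that case and $(-1)^{k+1}x^{\star k}$ otherwise — which is the stated dichotomy, noting that $(-1)^kx^{\star k}=(\vartheta_*x)^{\star k}$ and $(-1)^{k+1}x^{\star k}=-(\vartheta_*x)^{\star k}$.

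The step I expect to be the real crux is the appeal to honest (not merely graded) commutativity of $H_*(\Omega S^n;\Z)$ in the first paragraph: without it the even-dimensional formula would acquire a spurious extra sign, and this is precisely the place where one must not import the sign conventions valid for homotopy-commutative H-spaces. The remaining content is sign bookkeeping; the two points worth double-checking are the Eilenberg--Zilber/Künneth sign and the fact that $\vartheta\circ\mu=\mu\circ T\circ(\vartheta\times\vartheta)$ holds strictly for the half-interval concatenation (it does, as one sees by splitting $[0,1]$ at $\tfrac12$).
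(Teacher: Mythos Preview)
Your proof is correct and follows essentially the same approach as the paper: both establish the anti-homomorphism identity $\vartheta\circ\mu=\mu\circ T\circ(\vartheta\times\vartheta)$, pass to homology with the K\"unneth/Eilenberg--Zilber sign, and then invoke the genuine (not merely graded) commutativity of $\Z[x]$ to obtain the first formula. The only cosmetic difference is in the second part: the paper iterates via squaring (applying the identity with $a=b$ and using that $\vartheta_*(a)=\pm a$ on each cyclic group), whereas you induct one factor at a time and extract the explicit closed form $(-1)^{(n-1)\binom{k}{2}+k}$; your route is slightly cleaner bookkeeping but the content is the same.
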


\begin{proof}
Similar to the case of free loops, for $\Omega=\Omega M$ we have that for the concatenation $\phi$
\begin{equation*}
    \vartheta\circ\phi=\phi\circ\left(\mathrm{T}\circ(\vartheta\times\vartheta)\right)
\end{equation*}
holds. Here $\mathrm{T}:\Omega\times\Omega\rightarrow\Omega\times\Omega, (\alpha,\beta)\mapsto (\beta,\alpha)$. Hence, we get the commutative diagram (any coefficients)
\begin{equation*}
\xymatrixrowsep{1.5pc}
 \xymatrixcolsep{4pc}\xymatrix{
   H_i(\Omega)\otimes H_j(\Omega)\ar[r]_-\times\ar[d]_-{\vartheta_*\otimes\vartheta_*}&H_{i+j}(\Omega\times\Omega)\ar[r]_-{\phi_*}\ar[d]_-{(\vartheta\times\vartheta)_*}&H_{i+j}(\Omega)\ar[dd]_-{\vartheta_*}\\
   H_i(\Omega)\otimes H_j(\Omega)\ar[r]_-\times\ar[d]_-{a\otimes b\mapsto b\otimes a}&H_{i+j}(\Omega\times\Omega)\ar[d]_-{\mathrm{T}_*}&\\
   H_j(\Omega)\otimes H_i(\Omega)\ar[r]_-{(-1)^{ij}\times}&H_{i+j}(\Omega\times\Omega)\ar[r]_-{\phi_*}&H_{i+j}(\Omega),\\
     }  
\end{equation*}
i.e. $(-1)^{ij}\vartheta_*(b)\star\vartheta_*(a)=\vartheta_*(a\star b)$ for all $a\in H_i(\Omega),b\in H_j(\Omega)$. If $M=S^n$ the Pontrjagin algebra with integer coefficients is a polynomial algebra and thus in particular commutative, hence we can also write
\begin{equation*}
 (-1)^{ij}\vartheta_*(a)\star\vartheta_*(b)=\vartheta_*(a\star b)
\end{equation*}
for $a\in H_i(\Omega S^n;\Z),b\in H_j(\Omega S^n;\Z)$. For any class $a\in H_i(\Omega S^n;\Z)$ we have $\vartheta_*(a)=\pm a$ as $\vartheta$ is an involution and $H_i(\Omega S^n;\Z)$ is isomorphic to either $\Z$ or $0$. Thus if above $a=b$ we have
\begin{equation*}
  \vartheta_*(a^{\star 2})=\begin{cases} \big(\vartheta_*(a)\big)^{\star 2}=a^{\star 2}, & \text{if }|a|\text{ is even}\\ -\big(\vartheta_*(a)\big)^{\star 2}=-a^{\star 2}, & \text{if }|a|\text{ is odd}\end{cases}.
\end{equation*}
 The assertion of the lemma follows by iteratively applying the above and setting $a=x$.
\end{proof}

We now use the above lemmata to see how $\vartheta$ acts on the generators of the Chas-Sullivan algebra of spheres.

\begin{prp}
\begin{itemize}
\item For $n$ odd the Chas-Sullivan algebra $\big(H_*(\Lambda S^n;\Z);\ast\big)$ has two generators $A$ and $U$ and a unit $E$ (see Section \ref{CS spheres}) and we have
\begin{align*}
\vartheta_*(A)=A,\quad
\vartheta_*(E)=E,\quad
\vartheta_*(U)=-U.
\end{align*}

\item for $n$ even the Chas-Sullivan algebra $\big(H_*(\Lambda S^n;\Z);\ast\big)$ has three generators $A,\sigma_1$ and $\Theta$ and a unit $E$ (see Section \ref{CS spheres}) and we have
\begin{align*}
\vartheta_*(A)=A,\quad
\vartheta_*(E)=E,\quad
\vartheta_*(\sigma_1)=-\sigma_1,\quad
\vartheta_*(\Theta)=-\Theta.
\end{align*}
\end{itemize}
\end{prp}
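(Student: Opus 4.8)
The plan is to dispatch the easy generators by hand and to reduce the rest to the based loop space, where the action of $\vartheta_*$ is already understood. First, $A\in H_0(\Lambda S^n;\Z)$: since $\Lambda S^n$ is path-connected, every self-map induces the identity on $H_0$, so $\vartheta_*(A)=A$. Next, $E=c_*([S^n])$ for the inclusion $c\colon S^n\hookrightarrow\Lambda S^n$ of constant loops; as $\vartheta$ fixes every constant loop, $\vartheta\circ c=c$, hence $\vartheta_*(E)=E$. For the three remaining generators the relevant integral homology group of $\Lambda S^n$ is infinite cyclic and $\vartheta_*$ is an involution, so it acts there by $\pm1$; it is therefore enough to determine the sign after $\otimes\,\Q$, and I will work with $\Q$-coefficients from here on.

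For $U$, $\sigma_1$ and $\Theta$ the plan is to transport the question to $\Omega=\Omega_pS^n=ev_0^{-1}(p)$, on which $\vartheta$ restricts to the orientation reversal $\vartheta_\Omega$ with known action $\vartheta_*(x)=-x$ on $H_{n-1}(\Omega S^n)$ and, more generally, $\vartheta_*(x^{\star k})$ as in Lemma~\ref{pontrjagin generator}. Since $j\circ\vartheta_\Omega=\vartheta\circ j$ for the fibre inclusion $j\colon\Omega\hookrightarrow\Lambda$, one gets $\vartheta_*\circ j_*=j_*\circ(\vartheta_\Omega)_*$. Moreover $\vartheta_*$ commutes with the Gysin map $j_!$: because $ev_0\circ\vartheta=ev_0$ makes $\vartheta$ fibre-preserving over $S^n$, the automorphism it induces on the normal bundle $N_\Omega\cong\Omega\times T_pS^n$ (identified via $d(ev_0)$) is $\vartheta_\Omega\times\mathrm{id}$, which fixes a Thom class of $N_\Omega$; with a $\vartheta$-equivariant tubular neighbourhood of $\Omega$ (the point-pushing map $\lambda$ of Section~\ref{explicit tubes} is equivariant), this yields $\vartheta_*\circ j_!=j_!\circ\vartheta_*$, by the same argument that established the commutation of $\vartheta_*$ with the Chas--Sullivan product in Proposition~\ref{inv vs mult CS}.

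Now for $n$ odd, $j_!(U)=x$ (Section~\ref{CS spheres}), so $j_!(\vartheta_*U)=\vartheta_*(j_!U)=\vartheta_*(x)=-x=j_!(-U)$; since $j_!$ is nonzero, hence injective, on the rank-one space $H_{2n-1}(\Lambda S^n;\Q)$, this forces $\vartheta_*(U)=-U$. For $n$ even, the Serre spectral sequence of $\Omega S^n\xrightarrow{j}\Lambda S^n\xrightarrow{ev_0}S^n$ — concentrated in base degrees $0$ and $n$, with surviving bottom row because $ev_0$ has the section $c$ — identifies $j_*\colon H_{n-1}(\Omega S^n;\Q)\xrightarrow{\ \cong\ }H_{n-1}(\Lambda S^n;\Q)$, so $\sigma_1$ is a nonzero multiple of $j_*(x)$ and $\vartheta_*(\sigma_1)=-\sigma_1$ follows from $\vartheta_*(x)=-x$. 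This is the same scheme as for $U$, with $j_*$ in place of $j_!$.

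The one delicate case, and where I expect the main obstacle, is $\Theta$ for $n$ even. Here $j_!(\Theta)$ lies in $H_{2n-2}(\Omega S^n;\Q)=\Q\langle x^{\star2}\rangle$, and I would need that $j_!(\Theta)\neq0$ — equivalently that $j_!\colon H_{3n-2}(\Lambda S^n;\Q)\to H_{2n-2}(\Omega S^n;\Q)$ is an isomorphism rather than the zero map. I would obtain this either by citing the explicit structure of the Chas--Sullivan algebra of even spheres (\cite{Cohen2004}, or Lemma~5.4 of \cite{hingston2013}), or from relation~\eqref{other eq}: it gives $j_*(x)\ast\Theta=j_*\big(x\star j_!(\Theta)\big)$, whose left-hand side is a nonzero multiple of $\sigma_1\ast\Theta$, and $\sigma_1\ast\Theta\neq0$ because it corresponds to $s_1t$, which is nonzero modulo the defining relations $a^2,\,s_1a,\,2at$ (each a multiple of $a$); hence $x\star j_!(\Theta)\neq0$ and so $j_!(\Theta)\neq0$. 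Granting this, write $j_!(\Theta)=m\,x^{\star2}$ with $m\neq0$; by Lemma~\ref{pontrjagin generator} we have $\vartheta_*(x^{\star2})=-x^{\star2}$ for $n$ even, so $j_!(\vartheta_*\Theta)=\vartheta_*(j_!\Theta)=-j_!(\Theta)$. Since $\vartheta_*(\Theta)=\pm\Theta$ and $j_!$ is injective on the rank-one space $H_{3n-2}(\Lambda S^n;\Q)$, the sign $+$ would give $j_!(\Theta)=0$, a contradiction, so $\vartheta_*(\Theta)=-\Theta$. In short, everything reduces to $\Omega$ via the commutation of $\vartheta_*$ with $j_*$ and $j_!$; the only step that is not purely formal is checking that $\Theta$ is genuinely detected by $j_!$, which needs real information about the loop homology of even spheres and not merely properties of $j$ and of $\vartheta$.
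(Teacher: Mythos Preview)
Your proof is correct and shares the paper's core idea of reducing to the based loop space via Lemma~\ref{pontrjagin generator}, but the mechanism is different. The paper never isolates the commutation $\vartheta_*\circ j_!=j_!\circ\vartheta_*$ that you prove; instead it works almost entirely through $j_*$ and the relations~\eqref{1}--\eqref{3}. Concretely, the paper first shows $\sigma_r=j_*(x^{\star(2r-1)})$ (odd $n$) respectively $\sigma_r=j_*\big(x\star j_!(\Theta)^{\star(r-1)}\big)$ (even $n$) and computes $\vartheta_*(\sigma_r)$ for all $r$; only then does it extract $\vartheta_*(U)$ from $\sigma_1=A\ast U$ and $\vartheta_*(\Theta)$ from $\sigma_2=\sigma_1\ast\Theta$ using the algebra homomorphism property of $\vartheta_*$. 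Your route is more direct for $U$ and $\Theta$: once $j_!$ commutes with $\vartheta_*$, each case is a one-line injectivity argument. The trade-off is that the paper's longer detour yields $\vartheta_*(\sigma_r)=(-1)^r\sigma_r$ for all $r$ along the way, which feeds directly into the subsequent computation of $H_*(\Lambda S^n/\vartheta;\Q)$, whereas you would need to recover this separately (of course it follows immediately from $\vartheta_*(\Theta)=-\Theta$ and $\sigma_r=\sigma_1\ast\Theta^{\ast(r-1)}$). Your justification that $j_!(\Theta)\neq0$ via relation~\eqref{other eq} and $\sigma_1\ast\Theta\neq0$ is exactly the observation the paper uses too (phrased there as ``$\sigma_r$ being a generator implies $j_!(\Theta)=\pm x^2$'').
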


\begin{proof}
Since the inclusion $S^n\hookrightarrow \Lambda$ as constant curves is a $\vartheta$-equivariant section of $\Lambda\xrightarrow{ev_0} S^n$, it follows that the generators $A$ and $E$ are fixed und $\vartheta_*$.\\
For the generators $\sigma_r\in H_{(2r-1)(n-1)}(\Lambda;\Z)\cong\Z$ we have
\begin{equation*}
 \sigma_r\coloneqq\sigma_1\ast \Theta^{\ast (r-1)}=\Theta^{\ast (r-1)}\ast\sigma_1.
\end{equation*} 
Here $\Theta\in H_{3n-2}(\Lambda)$ is a designated generator (compare \cite[Lemma 5.4]{hingston2013}). Furthermore, form the homotopy sequence of the fibration $\Omega S^n\xrightarrow{j}\Lambda S^n\xrightarrow{ev_0}S^n$, for the generator $x\in H_{n-1}(\Omega S^n;\Z)$ of the Pontrjagin ring we have $j_*(x)=\sigma_1$. The sequence implies that
\begin{equation*}
 \xymatrix{
 H_{n-1}(\Omega S^n;\Z)\cong\pi_{n-1}(\Omega S^n)\ar[r]^-{j_*}&\pi_{n-1}(\Lambda S^n)\cong H_{n-1}(\Lambda S^n;\Z)
 }
\end{equation*}
is surjective. Hence $j_*$ maps generators to generators in degree $n-1$. The isomorphisms $H_{n-1}(\Omega S^n;\Z)\cong\pi_{n-1}(\Omega S^n)$ and $\pi_{n-1}(\Lambda S^n)\cong\ H_{n-1}(\Lambda S^n;\Z)$ also follow from this homotopy sequence: It implies that $0=\pi_k(\Omega S^n)=\pi_k(\Lambda S^n)$ for $k\leq n-2$, i.e. that $\Omega S^n$ and $\Lambda S^n$ are $(n-2)$-connected. This in turn implies, via the Hurewicz theorem, that the Hurewicz map is an isomorphism in degree $n-1$  if $n-1$ is at least 2.

We now use the relations between the algebras of the spaces in the fibration $\Omega\to\Lambda\to M$ of Proposition \ref{alg hom CS Pont}.
More precisely, we use the following three relations given in \cite[Section 9.3]{goresky2009} or in \cite[Section 6, Equations 27-29]{hingston2013}:
\begin{align}
 j_!(a\ast b)&=j_!(a)\star j_!(b),\label{1}\\
 j_*(y)\ast a&=j_*\big(y\star j_!(a)\big),\label{2}\\
 j_*\big(j_!(a)\big)&=A\ast a, \label{3}
\end{align}
where $A$ is a generator of $H_0(\Lambda)$ and $\star$ denotes the Pontrjagin product. The third equation follows from the second noting that in the Pontrjagin algebra the class of a constant loop is a unit. Using these equations for $M=S^n$ we obtain:
\begin{itemize}
 \item for $n$ odd we have $U^{\ast2}=\Theta$, where $U\in H_{2n-1}(\Lambda S^n;\Z)$ is a generator. Hence, since $\sigma_1=A\ast U$, using equations \eqref{3} and \eqref{1}, we have  
  \begin{align*}
  \sigma_r&=\sigma_1\ast\Theta^{\ast(r-1)}=\sigma_1\ast U^{\ast(2r-2)}=A\ast U^{\ast(2r-1)}=j_*j_!(U^{\ast(2r-1)})\\
  &=j_*j_!(U)^{\star (2r-1)}=j_*(x^{\star(2r-1)})
 \end{align*}

since $j_!(U)=x$ (compare \cite[Section 6]{hingston2013}). Note that $j_!(U)=x$ must hold, as the above for $r=1$ is $j_*(x)=\sigma_1=j_*\big(j_!(U)\big)$. Since $|x|=n-1$ is even, it follows from Lemma \ref{pontrjagin generator} that 
 \begin{align*}
  \vartheta_*(\sigma_r)&=\vartheta_*\big(j_*(x^{\star(2r-1)})\big)=j_*\big(\vartheta_*(x^{\star(2r-1)})\big)=j_*\big({\vartheta_*(x)}^{\star(2r-1)}\big)\\&=j_*({-x}^{\star(2r-1)})=-j_*(x^{\star(2r-1)})=-\sigma_r.
 \end{align*}

\item for $n$ even, using the equations \eqref{2} and \eqref{1}, we only get
\begin{align*}
 \sigma_r&=\sigma_1\ast\Theta^{\ast(r-1)}=j_*(x)\ast\Theta^{\ast(r-1)}=j_*\big(x\star j_!(\Theta^{\ast (r-1)})\big)\\
 &=j_*\big(x\star j_!(\Theta)^{\star (r-1)}\big)
\end{align*}
since $H_{2n-1}(\Lambda S^n;\Z)=0$ (compare \cite[Section 6]{hingston2013}). Hence we have
 \begin{align*}
  \vartheta_*(\sigma_r)&=\vartheta_*\Big(j_*\big(x\star j_!(\Theta)^{\star(r-1)}\big)\Big)=j_*\Big(\vartheta_*\big(x\star j_!(\Theta)^{\star(r-1)}\big)\Big)\\
  &=j_*\Big(\vartheta_*(x)\star \vartheta_*\big(j_!(\Theta)^{\star(r-1)}\big)\Big)=-j_*\Big(x\star \vartheta_*\big(j_!(\Theta)^{\star(r-1)}\big)\Big)\\
  &=-j_*\Big(x\star \big(\vartheta_*\circ j_!(\Theta)\big)^{\star(r-1)}\Big)\,,
 \end{align*}
since $j_!(\Theta)$ has even degree. $\sigma_r=j_*(x\star j_!(\Theta)^{\ast(r-1)})$ being a generator implies that $j_!(\Theta)=\pm x^2$. Thus
\begin{align*}
 \vartheta_*\big(j_!(\Theta)\big)&=\vartheta_*(\pm x^{\star2})=\pm\vartheta_*(x^{\star2})=\pm(-x^{\star2})=-1(\pm x^{\star2})=-j_!(\Theta)
\end{align*}
and so
\begin{equation*}
  \vartheta_*(\sigma_r)=(-1)^rj_*\Big(x\star \big(j_!(\Theta)\big)^{\star(r-1)}\Big)=(-1)^r\sigma_r.
 \end{equation*}
\end{itemize}
For $\Theta$ we have
\begin{equation*}
  \vartheta_*(\Theta)=(-1)^{(n-1)}\Theta  
\end{equation*}
since
 \begin{itemize}
  \item  if $n$ is odd we have $\vartheta_*(\Theta)=\vartheta_*(U^{\ast2})=\vartheta_*(U)^{\ast2}$ (see proposition \ref{inv vs mult CS}) and $-A\ast U=-\sigma_1=\vartheta_*(\sigma_1)=\vartheta_*(A\ast U)=\vartheta_*(A)\ast \vartheta_*(U)=A\ast\vartheta_*(U)$. Since the homology is torsion-free for $n$ odd and with at most one generator in each degree, it follows that $\vartheta_*(U)=-U $ and hence $\vartheta_*(\Theta)=\Theta$ if $n$ is odd.
  \item if $n$ is even $\vartheta_*(\sigma_2)=\vartheta_*(\sigma_1\ast\Theta)=\vartheta_*(\sigma_1)\ast\vartheta_*(\Theta)=-\sigma_1\ast\vartheta_*(\Theta)$ (again by equation \ref{alg hom}). But we have just shown that $\vartheta_*(\sigma_2)=\sigma_2$ hence $\vartheta_*(\Theta)=-\Theta$ for even $n$.
 \end{itemize}
\end{proof}

Together with Proposition \ref{inv vs mult CS} we get (see figure \ref{fig:hom diag})

\begin{cor}
Let $\vartheta\colon\Lambda S^n\to\Lambda S^n$ be the involution that reverses the orientation of loops. Then, if $n\geq3$, for the orbit space $\Lambda/\vartheta$ of the associated $\Z_2$-action we have:
 \begin{itemize}
  \item for $n$ odd 
  \begin{equation*}
   H_i(\Lambda S^n/\vartheta;\Q)\cong\begin{cases} \Q& i=0,n \\  \Q& i=n-1+\lambda_r=2r(n-1) \text{ for }r\in\N \\  \Q& i=2n-1+\lambda_r=n+2r(n-1) \text{ for }r\in\N \\ \{0\} &\text{otherwise} \end{cases}
  \end{equation*}
\item for $n$ even
    \begin{equation*}
   H_i(\Lambda S^n/\vartheta;\Q)\cong\begin{cases} \Q& i=0,n \\  \Q& i=\lambda_r=(2r-1)(n-1) \text{ for }r\in2\N \\  \Q& i=2n-1+\lambda_r=n+2r(n-1) \text{ for }r\in2\N \\ \{0\} &\text{otherwise} \end{cases}
  \end{equation*}
 \end{itemize}
where $\lambda_r=(2r-1)(n-1)$ is the index of a $r$-fold iterated prime closed geodesic of the round sphere $(S^n,g_{st})$.
\end{cor}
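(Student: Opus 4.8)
The plan is to reduce the statement to an algebraic computation inside the rational Chas--Sullivan algebra of $S^n$, whose additive and multiplicative structure was recalled in Section \ref{CS spheres}. First I would invoke Proposition \ref{reversible vs. nonreversible?} for $G=D_1=\langle\vartheta\rangle$, which says that $q_*$ identifies $H_i(\Lambda S^n;\Q)^\vartheta$ with $H_i(\Lambda S^n/\vartheta;\Q)$ for every $i$; so it suffices to determine the $\vartheta_*$-fixed subspace in each degree. Since each $H_i(\Lambda S^n;\Z)$ is $0$, $\Z$ or $\Z_2$, the universal coefficient theorem shows $H_i(\Lambda S^n;\Q)$ is either $0$ or one-dimensional, and since $\vartheta$ is an involution, $\vartheta_*$ then acts on it by $+1$ or $-1$; hence $H_i(\Lambda S^n;\Q)^\vartheta$ equals $H_i(\Lambda S^n;\Q)$ precisely when $\vartheta_*$ acts by $+1$, and is $0$ otherwise. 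The problem thus becomes to decide, degree by degree, the sign by which $\vartheta_*$ acts.

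To do this I would combine the multiplicativity $\vartheta_*(x\ast y)=\vartheta_*(x)\ast\vartheta_*(y)$ (Proposition \ref{inv vs mult CS}) with the values on generators established in the preceding Proposition: $\vartheta_*(A)=A$, $\vartheta_*(E)=E$, and $\vartheta_*(U)=-U$ for $n$ odd, respectively $\vartheta_*(\sigma_1)=-\sigma_1$, $\vartheta_*(\Theta)=-\Theta$ for $n$ even. For $n$ odd the rational algebra has $\Q$-basis $\{E,A\}\cup\{U^{\ast k}:k\geq 1\}\cup\{A\ast U^{\ast k}:k\geq 1\}$, and multiplicativity forces $\vartheta_*(U^{\ast k})=(-1)^kU^{\ast k}$ and $\vartheta_*(A\ast U^{\ast k})=(-1)^kA\ast U^{\ast k}$; the $\vartheta_*$-fixed basis vectors are therefore $E$, $A$, and $U^{\ast 2r}$, $A\ast U^{\ast 2r}$ for $r\geq 1$, lying in degrees $n$, $0$, $2n-1+\lambda_r=n+2r(n-1)$ and $n-1+\lambda_r=2r(n-1)$ respectively. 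For $n$ even the rational algebra has $\Q$-basis $\{E,A\}\cup\{\Theta^{\ast k}:k\geq 1\}\cup\{\sigma_r=\sigma_1\ast\Theta^{\ast(r-1)}:r\geq 1\}$ (the integral classes $A\ast\Theta^{\ast k}$, $k\geq 1$, being $2$-torsion, die rationally), and multiplicativity gives $\vartheta_*(\Theta^{\ast k})=(-1)^k\Theta^{\ast k}$ and $\vartheta_*(\sigma_r)=(-1)^r\sigma_r$; the fixed basis vectors are $E$, $A$, and $\Theta^{\ast r}$, $\sigma_r$ for $r$ even, lying in degrees $n$, $0$, $2n-1+\lambda_r=n+2r(n-1)$ and $\lambda_r=(2r-1)(n-1)$. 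Reading off these degrees yields exactly the two lists in the statement, and in every degree not appearing $H_i(\Lambda S^n;\Q)$ already vanishes, hence so does $H_i(\Lambda S^n/\vartheta;\Q)$.

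I expect no serious obstacle: the argument is bookkeeping resting on Propositions \ref{reversible vs. nonreversible?} and \ref{inv vs mult CS} together with the preceding Proposition. The two points that need care are the translation of exponents --- the ``$k$ even'' conditions on the $U$- and $\Theta$-powers must be matched against the index shift $\lambda_r=(2r-1)(n-1)$ so that they become the ``$r$ even'' conditions recorded in the even-dimensional case --- and the observation that, although $\vartheta_*$ fixes the classes $A\ast\Theta^{\ast k}$ for $n$ even, these are torsion, hence zero over $\Q$, and so contribute nothing to $H_*(\Lambda S^n/\vartheta;\Q)$.
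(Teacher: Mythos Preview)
Your proposal is correct and follows essentially the same approach as the paper's own proof: reduce via Proposition \ref{reversible vs. nonreversible?} to computing the $\vartheta_*$-fixed subspace, then use the multiplicativity of $\vartheta_*$ (Proposition \ref{inv vs mult CS}) together with the action on generators from the preceding Proposition to determine the sign in each degree. Your exposition is in fact a bit more explicit than the paper's, which simply records the parity conditions on $k$ without spelling out the basis or the universal-coefficient step.
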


\begin{proof}
Using Proposition \ref{inv vs mult CS} we compute
\begin{itemize}
\item for $n$ odd we have $\vartheta_*(U^{\ast k})=\vartheta_*(U)^{\ast k}=(-1)^kU^{\ast k}$ and hence, by Proposition \ref{reversible vs. nonreversible?}, 
\begin{align*}
q_*(U^{\ast k})\neq0&\Leftrightarrow k\text{ is even},\\
q_*(A\ast U^{\ast k})\neq0&\Leftrightarrow k\text{ is even}.
\end{align*}

\item for $n$ even we have $\vartheta_*(\Theta^{\ast k})=(-1)^k\Theta^{\ast k}$ and $\vartheta_*(\sigma_k)=(-1)^k\sigma_k$ and so, by Proposition \ref{reversible vs. nonreversible?},
\begin{align*}
q_*(\Theta^{\ast k})\neq0&\Leftrightarrow k\text{ is even},\\
q_*(\sigma_k)=q_*(\sigma_1\ast \Theta^{\ast (k-1)})\neq0&\Leftrightarrow k\text{ is even}.
\end{align*}

\end{itemize}
\end{proof}

\begin{figure}
    \centering
\begin{equation*}    
\begin{array}{c|ccccc}
\uparrow d &  &  &  & & \\ 
8n-8 &  &  & & & \text{ \ \ }\Q \\ 
&  &  &  &  & \\ 
7n-6 &  &  &  & \text{ \ \ }\Q & \\ 
&  &  &  &  & \\ 
7n-7 &  &  & & & \text{ \ \ }0  \\ 
&  &  &  &  & \\ 
6n-5 &  &  &  & \text{ \ \ }0  \\ 
&  &  &  &  & \\ 
6n-6 &  &  &  & \text{ \ \ }\Q & \\ 
&  &  &  &  & \\ 
5n-4 &  &  & \text{ \ \ }\langle q_*(U^4)\rangle=\Q&  & \\ 
&  &  &  &  & \\ 
5n-5 &  &  &  & \text{ \ \ }0 & \\ 
&  &  &  &  & \\ 
4n-3 &  &  & \text{ \ \ }0&  & \\ 
&  &  &  &  & \\ 
4n-4 &  &  & \text{ \ \ }\langle q_*(A\ast U^4)\rangle=\Q&  & \\
&  &  &  &  & \\ 
3n-2 &  & \text{ \ \ }\langle q_*(U^2)\rangle=\Q&  &  & \\ 
&  &  &  &  & \\ 
3n-3 &  &  & \text{ \ \ }0&  & \\ 
&  &  &  &  &\\ 
2n-1 &  & \text{ \ \ }0 &  &  & \\ 
&  &  &  &  & \\ 
2n-2 &  & \text{ \ \ }\langle q_*(A\ast U^2)\rangle=\Q&   & &\\ 
&  &  &  &   &\\ 
n & \text{ \ \ \ \ \ }\Q\text{ \ \ \ \ \ } &  &  & &  \\ 
&  &  &  &  & \\ 
n-1 &  & \text{ \ \ }0&  &  & \\ 
&  &  &  &   &\\ 
0 & \text{ \ \ \ \ \ \ }\Q{ \ \ \ \ \ \ } &  &  & &  \\ 
&  &  &  &  & \\ \hline
&  &  &  &  & \\ 
& 0 & 1 & \text{ \ \ \ \ \ }2 & \text{ \ \ \ \ \ }3 & \text{ \ \ \ \ \ }4\rightarrow r
\end{array}
\end{equation*}
 \caption{the homology $H_i(\Lambda S^n/\vartheta;\Q)$ for $n$ odd }
 \label{fig:hom diag}
\end{figure}
We now compute the transfer product $P_\vartheta$ associated to orientation reversal. Recall that it is given by
\begin{equation*}
P_\vartheta(a,b)={q_\vartheta}_*\big(tr_\vartheta(a)\ast tr_\vartheta(b)\big)
\end{equation*}
for $a,b\in H_*(\Lambda/\vartheta)$.
\pagebreak

\begin{thm}\label{main thm}
 Let $n>2$ and let $\vartheta$ be the orientation reversal of loops on $\Lambda S^n$. Then
 \begin{itemize}
\item for $n$ odd, there exists  a generator $\mu$ of $H_{3n-2}(\Lambda S^n/\vartheta;\Q)$ which is not nilpotent in the algebra $(H_*(\Lambda S^n/\vartheta;\Q),P_\vartheta)$. More precisely, for every $k\in\N$, $\mu^k$ is a generator of $H_{2k(n-1)+n}(\Lambda S^n/\vartheta;\Q)$. Moreover, multiplication with $\mu$, i.e.
 \begin{equation*}
  P_\vartheta(\cdot,\mu)\colon H_i(\Lambda S^n/\vartheta;\Q)\to H_{i+2n-2}(\Lambda S^n/\vartheta;\Q)
 \end{equation*}
is an isomorphism for $i\geq0$.

\item for $n$ even, there exists a generator $\eta$ of $H_{5n-4}(\Lambda S^n/\vartheta;\Q)$ which is not nilpotent in the algebra $(H_*(\Lambda S^n/\vartheta;\Q),P_\vartheta)$. More precisely, for every $k\in\N$, $\eta^k$ is a generator of $H_{4k(n-1)+n}(\Lambda S^n/\vartheta;\Q)$. Moreover, multiplication with $\eta$, i.e.
 \begin{equation*}
  P_\vartheta(\cdot,\eta)\colon H_i(\Lambda S^n/\vartheta;\Q)\to H_{i+4n-4}(\Lambda S^n/\vartheta;\Q)
 \end{equation*}
is an isomorphism for $i>0$.
\end{itemize}
\end{thm}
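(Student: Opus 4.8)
The plan is to transport the entire statement to the Chas--Sullivan algebra of $S^n$ via the algebra isomorphism up to scaling
\[
q_*\colon\big(H_*(\Lambda S^n;\Q)^\vartheta,\ast\big)\xrightarrow{\ \cong\ }\big(H_*(\Lambda S^n/\vartheta;\Q),P_\vartheta\big)
\]
of Theorem~\ref{O(2) subgroup iso}, which by Theorem~\ref{alg iso} satisfies $P_\vartheta\big(q_*(x),q_*(y)\big)=4\,q_*(x\ast y)$ (here $|\Z_2|=2$). Thus it suffices to analyse multiplication by $\Theta=U^{\ast2}$ on $H_*(\Lambda S^n;\Q)^\vartheta$ when $n$ is odd, and by $\Theta^{\ast2}$ when $n$ is even.

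First I would set $\mu:=q_*(\Theta)=q_*(U^{\ast2})$ for $n$ odd and $\eta:=q_*(\Theta^{\ast2})$ for $n$ even. Since $\vartheta_*$ is an algebra homomorphism (Proposition~\ref{inv vs mult CS}) with $\vartheta_*(U)=-U$ for $n$ odd and $\vartheta_*(\Theta)=-\Theta$ for $n$ even, all of the classes $U^{\ast2k}$ and $\Theta^{\ast2k}$ are $\vartheta_*$-invariant (even powers); as $U^{\ast2k}$ generates $H_{2k(n-1)+n}(\Lambda S^n;\Z)\cong\Z$ and $\Theta^{\ast2k}$ generates $H_{4k(n-1)+n}(\Lambda S^n;\Z)\cong\Z$ (Section~\ref{CS spheres}), Proposition~\ref{reversible vs. nonreversible?} shows that $q_*$ carries them to generators of the corresponding rational homology groups of $\Lambda S^n/\vartheta$ found in the preceding Corollary. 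Iterating the scaling relation gives $\mu^{k}=4^{k-1}q_*(U^{\ast2k})$ and $\eta^{k}=4^{k-1}q_*(\Theta^{\ast2k})$; these are nonzero scalar multiples of generators, so $\mu$ and $\eta$ are non-nilpotent and $\mu^k$, $\eta^k$ are generators in degrees $2k(n-1)+n$ and $4k(n-1)+n$ respectively.

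For the isomorphism statements I would invoke the fact (Section~\ref{CS spheres}, from \cite{hingston2013,goresky2009}) that on $H_*(\Lambda S^n;\Q)$ multiplication by $\Theta$ is an isomorphism $H_i\to H_{i+2n-2}$: for all $i\geq0$ when $n$ is odd --- for $i>0$ this is the cited statement, and for $i=0$ one notes that $A\ast\Theta$ generates $H_{2n-2}(\Lambda S^n;\Z)\cong\Z$ --- and for all $i>0$ when $n$ is even, where $A\ast\Theta^{\ast k}$ is $2$-torsion and hence vanishes rationally, which is precisely why $i=0$ must be excluded in the even case. Since $\vartheta_*$ is an algebra automorphism fixing $\Theta^{\ast2}$, and $\Theta$ itself when $n$ is odd, multiplication by the relevant class is $\vartheta_*$-equivariant; as $2$ is invertible, $H_*(\Lambda S^n;\Q)$ is the direct sum of the $+1$- and $-1$-eigenspaces of the involution $\vartheta_*$, and a $\vartheta_*$-equivariant isomorphism restricts to an isomorphism of each eigenspace, in particular of $H_*(\Lambda S^n;\Q)^\vartheta$ in the indicated degree range. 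For $n$ even, composing $\ast\Theta$ with itself yields that $\ast\Theta^{\ast2}$ is an isomorphism of $H_*(\Lambda S^n;\Q)^\vartheta$ in degrees $>0$. Transporting through $q_*$, under which $P_\vartheta(\,\cdot\,,\mu)$ corresponds to four times multiplication by $\Theta$ and $P_\vartheta(\,\cdot\,,\eta)$ to four times multiplication by $\Theta^{\ast2}$, gives that $P_\vartheta(\,\cdot\,,\mu)$ is an isomorphism $H_i(\Lambda S^n/\vartheta;\Q)\to H_{i+2n-2}(\Lambda S^n/\vartheta;\Q)$ for $i\geq0$, and $P_\vartheta(\,\cdot\,,\eta)$ is an isomorphism $H_i(\Lambda S^n/\vartheta;\Q)\to H_{i+4n-4}(\Lambda S^n/\vartheta;\Q)$ for $i>0$.

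The main obstacle is the careful treatment of degree $0$, which genuinely differs between the two parities: for $n$ odd the free generator $A$ of $H_0$ is sent by $\ast\Theta$ to the free generator $A\ast\Theta$ of $H_{2n-2}$, so $\ast\mu$ stays an isomorphism there, whereas for $n$ even $H_{2n-2}(\Lambda S^n;\Q)=0$, so neither $\ast\Theta$ nor $\ast\eta$ can be an isomorphism out of degree $0$ --- hence the restriction to $i>0$ in the even case. The only other point needing attention is the elementary observation that a $\vartheta_*$-equivariant isomorphism restricts to an isomorphism of the invariant subspace, which uses both that $2$ is invertible in $\Q$ (so that the eigenspace splitting exists) and the compatibility of $\vartheta_*$ with the Chas--Sullivan product from Proposition~\ref{inv vs mult CS}; the remaining identifications are direct transcriptions of the structure of $\big(H_*(\Lambda S^n;\Q),\ast\big)$ and of the action of $\vartheta_*$ on its generators established earlier in this section.
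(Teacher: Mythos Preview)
Your proposal is correct and follows essentially the same route as the paper: you define $\mu:=q_*(U^{\ast2})$ and $\eta:=q_*(\Theta^{\ast2})$ exactly as the paper does, and then reduce everything to the known Chas--Sullivan algebra of $S^n$ via Theorem~\ref{alg iso} and Proposition~\ref{inv vs mult CS}. The paper's own proof is a one-line appeal to these same results; you have simply unpacked that appeal, in particular the eigenspace argument showing that a $\vartheta_*$-equivariant isomorphism restricts to an isomorphism on invariants, and the degree-$0$ check distinguishing the odd and even cases.
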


\begin{proof}
\begin{itemize}
\item for $n$ odd we define $\mu\coloneqq{q_\vartheta}_*(U^{\ast 2})$. Then $\mu$ is a generator of $H_{3n-2}(\Lambda/\vartheta;\Q)$.

\item for $n$ even we define $\eta\coloneqq{q_\vartheta}_*(\Theta^{\ast 2})$.
Then $\eta$ is a generator of $H_{5n-4}(\Lambda/\vartheta;\Q)$.\\

\end{itemize}

The theorem now follows immediately from Theorem \ref{alg iso}, Proposition \ref{inv vs mult CS} and the Chas-Sullivan algebra of spheres (see Section \ref{CS spheres}).
\end{proof}

An analogous result holds for the product $P_\theta$ with rational coefficients as Proposition \ref{algebra iso} shows. In fact, the transfer algebras for all $G$ conjugate to some $D_m$ have a structure as shown in the theorem, this now follows from Theorem \ref{O(2) subgroup iso}.

 \clearpage
\bibliographystyle{amsalpha}
\bibliography{bib1}
\end{document}